\documentclass[12pt,a4paper]{amsart}%
\usepackage{imakeidx}
\usepackage{amsmath}
\usepackage{fullpage}
\usepackage{comment}
\usepackage{MyCommA}
\usepackage[final]{pdfpages}
\usepackage{tikz-cd}
\makeindex
\usetikzlibrary{calc}

\newcommand{\DimaC}[1]{{{#1}}}
\newcommand{\DimaB}[1]{{{#1}}}
\newcommand{\DimaA}[1]{{#1}}
\newcommand{\Dima}[1]{{{#1}}}
\newcommand{\Rami}[1]{{{#1}}}
\newcommand{\RamiA}[1]{{{#1}}}
\newcommand{\RamiB}[1]{{{#1}}}

\newcommand{\NextVer}[1]{}
\newcommand{\sub}{\subset}
\newcommand{\noleft}{\left.\kern-\nulldelimiterspace}

\newcommand{\alp}{\alpha}
\newcommand{\lam}{\lambda}
\newcommand{\valua}{\operatorname{val}}

\newtheorem{innercustomthm}{Theorem}

\newenvironment{customthm}[2][]{%
  \renewcommand{\theinnercustomthm}{#2}
  \begin{innercustomthm}[#1]
}{%
  \end{innercustomthm}%
}

\newcommand{\bfCom}{\mathbf{Com}}

\begin{document}
	
	\author[Aizenbud]{Avraham Aizenbud}
	\address{Avraham Aizenbud,
		Faculty of Mathematical Sciences,
		Weizmann Institute of Science,
		76100
		Rehovot, Israel}
	\email{aizenr@gmail.com}
	\urladdr{https://www.wisdom.weizmann.ac.il/~aizenr/}
	
	\author[Gourevitch]{Dmitry Gourevitch}
	\address{Dmitry Gourevitch,
		Faculty of Mathematical Sciences,
		Weizmann Institute of Science,
		76100
		Rehovot, Israel}
	\email{dimagur@weizmann.ac.il}
	\urladdr{https://www.wisdom.weizmann.ac.il/~dimagur/}

\author[Kazhdan]{David Kazhdan}
	\address{David Kazhdan,
    Einstein Institute of Mathematics, Edmond J. Safra Campus, Givaat Ram The
Hebrew University of Jerusalem, Jerusalem, 91904, Israel}
	\email{david.kazhdan@mail.huji.ac.il}
	\urladdr{https://math.huji.ac.il/~kazhdan/}
	
\author[Sayag]{Eitan Sayag}
	\address{Eitan Sayag,
    Department of Mathematics, Ben Gurion University of the Negev, P.O.B. 653,
Be’er Sheva 84105, ISRAEL}
	\email{eitan.sayag@gmail.com}
	\urladdr{www.math.bgu.ac.il/~sayage}

	\date{\today}
		\keywords{orbital integral, matrix coefficient, character, general linear group, reductive group, Harish-Chandra's integrability, positive characteristic}
	\subjclass{14L99,20G25,28C15,28C05}

	%
	%
	%
	%
	%
	%
	%
	%
	

\title{Orbital integral bounds the character for cuspidal representations of $\GL_n(\F_{\ell}((t)))$}
\maketitle
\begin{abstract} 
We prove that the character of an irreducible cuspidal representation of $\GL_n(\F_{\ell}((t)))$ is locally bounded up to a logarithmic factor by the orbital integral of a matrix coefficient of this representation.

The characteristic $0$ analog of this result is part of the proof of the 
celebrated Harish-Chandra's integrability theorem.

In a sequel work \cite{AGKS3} we use this result in order to prove a positive characteristic analog of Harish-Chandra's integrability theorem under some additional assumptions.
\end{abstract}

\tableofcontents 
\section{Introduction}

Throughout the paper we fix a non-Archimedian local field \tdef{$F$}
of arbitrary characteristic. Denote by \tdef{$\ell$} the size of the residue field of $F$. All the algebraic varieties and algebraic groups that we will consider are defined over $F$. 
We will also fix a natural number \tdef{$n$} and set $\mdef{\bfG}=(\GL_n)_F$.
 Denote $\mdef{G}=\bfG(F)$.

 We will denote by $\mdef{C^{-\infty}}(G)$ the space of generalized functions on $G$,
 \DimaA{{\it i.e.} functionals on the space of smooth compactly supported measures}. 
\subsection{Orbital integrals}
Our main result \DimaA{involves the notion of }the orbital integral of a function on $G$. Let us first define this notion:

Let $G^{rss}$ be the collection of regular semisimple elements in $G$. 
\begin{itemize}
    \item Denote by $\mdef{\mu_{G}}$ the Haar measure on $G$ normalized such that the measure of a maximal compact subgroup in $G$ is 1. 
    \item  For $x\in G^{rss}$ denote by $\mdef{\mu_{G_x}}$ the Haar measure on the torus $G_x$ normalized such that the measure of the maximal compact subgroup of $G_x$ is 1. 
\item For $x\in G^{rss}$ denote by $\mdef{\mu_{G\cdot x}}$ the $Ad(G)$-invariant measure on the conjugacy class $\RamiA{G\cdot x:=}Ad(G)\cdot x$ 
that corresponds to the measures $\mu_G$ and   $\mu_{G_x}$ under the identification
$Ad(G)\cdot x\cong G/G_x$. 

\item Let $f\in C^{\infty}(G)$ have compact support modulo the center of $G$. Let $\mdef{\Omega(f)}:G^{rss}\to \C$ be the function defined by $\Omega(f)(x)=\int f|_{G\cdot x}\mu_{G\cdot x}$. 
\end{itemize}

\subsection{Main results}
For an irreducible representation $\rho$ of $G$ we denote by $\mdef{\chi_\rho}$ its character, which is a generalized function on $G$.
\RamiA{Our main result consists of a bound on this character in terms of the orbital integral of a function on $G$. In order to formulate the bound we need some notation:}
\begin{itemize}
    \item For $x\in G^{rss}$ denote by $\mdef{\Delta}(x)$ the discriminant of the characteristic polynomial of $x$.
\item For $x\in G$ let $\mdef{ov_{G}}(x):=\max(\max_{i,j}(-\valua(x_{ij})),\valua(\det(x)))$
where $x_{ij}$ are the entries of $x$.
\item For $x\in G^{rss}$ let $\mdef{ov_{G^{rss}}}(x)=\max(ov_G(x),\valua(\Delta(x)))$.
\end{itemize}

\begin{introtheorem}[\S \ref{subsec:Idea}]\label{thm:main}
    Let $\rho$ be a cuspidal irreducible representation of $G$. Let $m$ be a matrix coefficient of $\rho$  $m(1)\neq 0$. Then there exists a polynomial $\mdef{\alp^{\rho\RamiA{,m}}}\in \mathbb{N}[t]$ 
such that for every $\eta\in C^{\infty}_c(G)$ we have 
    $$|\langle \chi_{\rho},\eta\cdot \mu_G\rangle|< \langle f\cdot \Omega(|m|),(|\eta|\cdot \mu_G)|_{G^{rss}}\rangle,$$
    where $f \in C^{\infty}(G^{rss})$ is defined by 
    $f(g)=\alp^{\rho\RamiA{,m}}(ov_{G^{rss}}(g)).$
\end{introtheorem}
\begin{remark*}
    A priori, the right hand side of the above inequality can be infinity.
We interpret the statement in that case as void. 
\end{remark*}
\subsection{Background and motivation}

When the characteristic of $F$ is zero, \Cref{thm:main} is proven in \cite[page 102]{HC_VD}\footnote{This is an immediate corollary of the 3rd displayed formula in \cite[page 102]{HC_VD} together with the first equality in (ii) in that page.}. This is an important step in the proof of Harish-Chandra's integrability theorem: {\it ``The character of an irreducible cuspidal representation of a $p$-adic reductive group is given by a locally integrable function"}, \DimaA{\cite{HC_VD}}. 
The proof in \cite[page 102]{HC_VD}, as well as our proof of \Cref{thm:main}, is based on the fact that averaging of cuspidal functions on $G$ is bounded (up to a logarithmic factor) by their orbital integral. See \Cref{thm:Om.bnd.Av} below.

This fact (in characteristic $0$) is also an important step in the proof of Harish-Chandra's integrability theorem for general (not necessarily cuspidal) irreducible representations.

In a sequel work \cite{AGKS3} we use \DimaA{\Cref{thm:main}} in order to prove an analog of Harish-Chandra's integrability theorem for cuspidal representations of $\GL_n(\F_\ell((t)))$ under some additional assumptions.

\subsection{Idea of the proof}\label{subsec:Idea}
In our argument we will use the following language. 
    
Several statements in this paper will concern the existence of certain polynomials in $\bN[t]$ that satisfy some conditions. In the formulation of each such statement we assign a name for the corresponding polynomial. It is implied that after each such statement we fix such a polynomial and we will refer to it later by this name. Nothing significant will depend on the choices of these polynomials.

We note that in many of the statements one can actually choose this polynomial to be a linear function, but this is not essential to our argument.

Following \cite{HC_VD} our proof can be divided into 2 steps:
\begin{enumerate}
    \item The character of $\rho$ is, up to a scalar, the (weak) limit of the sequence of  functions $\cA_i(m)$, where $m$ is a  matrix coefficient of $\rho$, and $\cA_i(m)$ is the averaging of $m$ w.r.t. a ball \DimaA{$G_i$} in $G$. See \Cref{prop:ChartAv} below.
    
    \item Given $x\in G^{rss}$ one can bound \DimaA{all} $\cA_i(m)(x)$ in terms of $ov_{G^{rss}}(x)$ and $\Omega(m)(x)$\DimaA{, uniformly in $i$}.
\end{enumerate}

We now provide a formal description of these ingredients.
In order to formulate the first ingredient, let us define the notion of averaging:

\begin{definition}\label{def:Av}
Denote
\begin{itemize}
    \item  \tdef{$Z(G)$} to be the center of $G$.
    \item $\mdef{G^{ad}}:=G/Z(G)$.
    \item $\mdef{G_i}:=\{x\in G|ov_G(x)\leq i\}$.
    \item $\mdef{(G^{ad})_i}$ to be the image of $G_i$ under the map $G\to G^{ad}$.
    \item $\mdef{\mu_{Z(G)}}$ to be the Haar measure on $Z(G)$ normalized such that the measure of the maximal compact subgroup of \DimaA{$Z(G)$} is $1$.
\item $\mdef{\mu_{G^{ad}}}$ to be the Haar measure on $G^{ad}$ corresponding to $\mu_G$ and $\mu_{Z(G)}$.
    \item 
For a function $f\in C^{\infty}(G)$, denote its averaging $\mdef{\cA_i}(f)\in C^{\infty}(G)$ by 
$$\mdef{\cA_i(f)(x)}:=\int_{(G^{ad})_i}f(Ad(g)x)dg,$$
where $dg$ is the Haar measure $\mu_{G^{ad}}$.
\end{itemize}
\end{definition}


\DimaA{Let us recall the notion of matrix coefficient of a representation $(\rho,V_{\rho})$. For a pair  $v \in V_{\rho}, \varphi \in \widetilde{V_{\rho}}$, the corresponding matrix coefficient is a smooth function on $G$ defined by $m_{v,\varphi}(g)=\varphi(\rho(g)v)$.}

We can now formulate the formula for the character of a cuspidal representation:
\DimaA{
\begin{thm}[{\S\ref{ssec:pf.ChartAv}, cf. \cite[Theorem 9]{HC_VD}}]\label{prop:ChartAv}
  Let $(\rho,V_{\rho})$ be an irreducible cuspidal representation of $G$. 
  \DimaA{Then there exists a positive number $d(\rho)$ such that for every 
   matrix coefficient $m$ of $\rho$ we have}
  $\cA_i(m)\underset{i\to \infty}{\longrightarrow} \frac{m(1)}{d(\rho)}\chi_{\rho}$, where the convergence is in the weak topology on $C^{-\infty}(G)$.
\end{thm}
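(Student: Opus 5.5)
We follow the classical argument of Harish-Chandra, using the Schur orthogonality relations for cuspidal representations. Here $d(\rho)$ will be the formal degree of $\rho$ with respect to $\mu_{G^{ad}}$.

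Fix $\eta\in C^\infty_c(G)$. By Fubini,
$$\langle \cA_i(m),\eta\mu_G\rangle=\int_{(G^{ad})_i}\int_G m(Ad(g)x)\eta(x)\,dx\,dg .$$
Write $m=m_{v,\varphi}$. The inner integral is
$$\int_G \varphi(\rho(g)\rho(x)\rho(g)^{-1}v)\,\eta(x)\,dx=\varphi\bigl(\rho(g)\rho(\eta)\rho(g^{-1})v\bigr),$$
where $\rho(\eta)=\int \eta(x)\rho(x)dx$. Since $\rho$ is admissible, $\rho(\eta)$ has finite rank. We expand it in a basis as $\rho(\eta)=\sum_k v_k\otimes\varphi_k$, with $v_k\in V_\rho$, $\varphi_k\in\widetilde V_\rho$, and $\operatorname{tr}\rho(\eta)=\sum_k\varphi_k(v_k)$. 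The integrand then becomes
$$\sum_k \varphi(\rho(g)v_k)\,\varphi_k(\rho(g^{-1})v)=\sum_k m_{v_k,\varphi}(g)\,m_{v,\varphi_k}(g^{-1}).$$
This product depends only on the image of $g$ in $G^{ad}$, because the central character cancels. We therefore get
$$\langle \cA_i(m),\eta\mu_G\rangle=\sum_k\int_{(G^{ad})_i} m_{v_k,\varphi}(g)\,m_{v,\varphi_k}(g^{-1})\,dg .$$

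Next we use that $\rho$ is cuspidal. Its matrix coefficients are then compactly supported modulo $Z(G)$, so each integrand above is compactly supported on $G^{ad}$. The sets $(G^{ad})_i$ are increasing and exhaust $G^{ad}$, since every $g\in G$ has finite $ov_G(g)$. Hence for $i$ large (depending on $\eta$) the truncated integrals equal the full integrals over $G^{ad}$. We then apply the Schur orthogonality relations for square-integrable (here compactly supported) representations modulo center:
$$\int_{G^{ad}} \varphi(\rho(g)v_k)\,\varphi_k(\rho(g^{-1})v)\,dg=\frac{1}{d(\rho)}\varphi(v)\,\varphi_k(v_k),$$
where $d(\rho)>0$ is the formal degree. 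Summing over $k$ gives $\frac{m(1)}{d(\rho)}\operatorname{tr}\rho(\eta)=\frac{m(1)}{d(\rho)}\langle\chi_\rho,\eta\mu_G\rangle$. So the sequence is eventually constant, and in particular it converges weakly.

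The main obstacle is establishing the Schur orthogonality in this normalization, in arbitrary characteristic. The formula is standard, but we would derive it as follows. The bilinear form $(v',\varphi')\mapsto\int_{G^{ad}}\varphi(\rho(g)v')\varphi'(\rho(g^{-1})v)\,dg$ is $G$-invariant. By Schur's lemma for the irreducible admissible $\rho$, it must be a scalar multiple $c(v,\varphi)\varphi'(v')$ of the canonical pairing. Applying the symmetric argument in the other variables shows $c(v,\varphi)=\varphi(v)/d(\rho)$ for a constant $d(\rho)$.

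Positivity of $d(\rho)$ follows by choosing a unitary structure, which exists for cuspidal representations up to twisting by a character, and taking $\varphi'=\bar\varphi$-type vectors. The integral then becomes $\int|m|^2>0$. Nothing here depends on the characteristic, since only compact support of matrix coefficients modulo center (Jacquet–Harish-Chandra) and admissibility are used.
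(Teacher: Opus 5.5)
Your argument is correct and is essentially the paper's. After Fubini, the paper cites \cite[Theorem 9]{HC_VD} for the identity $\int_{G^{ad}}\int_G \eta(x)\,m(gxg^{-1})\,dx\,dg=\frac{m(1)}{d(\rho)}\langle\chi_\rho,\eta\mu_G\rangle$, first reducing to the unitarizable case by a twist; you instead reprove that identity from the finite rank of $\rho(\eta)$ and Schur orthogonality modulo the center, and use the twist only to get positivity of $d(\rho)$. Your observation that $\langle\cA_i(m),\eta\mu_G\rangle$ is eventually constant is a harmless strengthening, but note explicitly that the sets $(G^{ad})_i$ are open, so every compact subset of $G^{ad}$ lies in one of them.
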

In fact,  $d(\rho)$ is the formal dimension of $\rho$ (see \cite[Theorem 1]{HC_VD}).}

\DimaA{The proof of \Cref{prop:ChartAv} in \cite{HC_VD} is valid in arbitrary characteristic. However, the result is not formulated there in this language. For completeness we include the proof of \Cref{prop:ChartAv} in \S\ref{ssec:pf.ChartAv} below.}

In order to  formulate the second ingredient we need the notion of a cuspidal function on $G$:
\begin{definition}
    Let $f\in C^{\infty}(G)$. 
    \begin{itemize}
        \item \RamiA{We say}
    that $f$ is \RamiA{\tdef[cuspidal function]{cuspidal}} if for any unipotent radical $U$ of a proper  parabolic subgroup  of $G$ and
 any $x\in G$ the function $h:U\to \C$ given by $h(u):=f(\RamiA{ux})$ is compactly supported and $$\int h\mu_U=0,$$ where $\mu_U$ is a Haar measure on $U$.    
        \item We denote the collection of cuspidal functions by $\mdef{C^\infty(G)^{cusp}}$.
    \end{itemize}
\end{definition}
Now we can formulate the second ingredient:

\begin{introtheorem}[\S\ref{subsec:PfStab}]
    \label{thm:Om.bnd.Av}
For any $m\in C^{\infty}(G)^{cusp}$ which has compact support modulo the center, there exists a polynomial $\mdef{\alpha^{\RamiA{m}}}\in\bN[t]$ such that 
for any $x\in G^{rss}$ 
we have
$$|\cA_i(m)(x)|\leq \alpha^{\RamiA{m}}(ov_{G^{rss}}(x))\Omega(|m|)(x).$$
\end{introtheorem}

\Cref{thm:main} follows now from \Cref{thm:Om.bnd.Av} and \Cref{prop:ChartAv} using the \RamiA{standard} fact that a matrix coefficient of a cuspidal representation is cuspidal. 

\subsubsection{Idea of the proof of \Cref{thm:Om.bnd.Av}}

The proof of \Cref{thm:Om.bnd.Av} is based on the following 2 ingredients:
\begin{enumerate}
    \item For any given $x\in G^{rss}$, the sequence $\cA_i(m)(x)$ stabilizes. Moreover, there is an effective way to bound the time needed to achieve saturation in terms of $ov_{G^{rss}}(x)$.
    \item One can bound $\cA_i(m)(x)$ in terms of $ov_{G^{rss}}(x),i$ and $\Omega(m)(x)$.
\end{enumerate}
\Rami{Both ingredients involve uniform work on $\RamiA{x\in}G^{rss}$. 
We use the theory of norms developed in \cite{Kot} in 
order to work uniformly on algebraic varieties. Then we prove some bounds on these norms on several algebraic varieties related to $G^{rss}$ -- see \S\ref{sec:norm.bnd}.}

Let us now describe these ingredients \Rami{in more details}.
The first one is the following \RamiA{stabilization result}:
\begin{introtheorem}[\S \ref{subsec:PfStab}]   
\label{thm:stab}
\Rami{For any $m\in C^{\infty}(G)^{cusp}$ which has compact support modulo the center,} there exists a polynomial $\mdef{\alpha^{m}_{ad-stab}}\in\bN[t]$ 
such that for every $x\in G^{rss}$ and every $i>i_0:=\alpha^{m}_{ad-stab}(ov_{G^{rss}}(x))$ we have 
    $$\cA_{i}(m)(x)=\cA_{i_0}(m)(x).$$
\end{introtheorem}
Sections \ref{ssec:norm.bnd}-\ref{subsec:PfStab} are dedicated to the proof of this theorem.
The proof itself is in \S \ref{subsec:PfStab}.
Let us briefly explain the idea \RamiA{of the} proof.
\begin{enumerate}
    \item For any $x$ in $G$ we consider the adjoint action map $\mdef{\phi_x}:G^{ad}\to G$ defined by $\phi_x([g])=gxg^{-1}$. Here $g$ is a representative in $G$ of a class $[g]\in G^{ad}$.
    
    We study the averaging of $m$ w.r.t. the adjoint action using the averaging of $\phi_x^{*}(m)$ w.r.t. the multiplication action.
    \item Note that while $m$ is a cuspidal function (i.e. its integrals over cosets of unipotent radicals of proper parabolic subgroups of $G$ vanish), the function $\phi_{\Rami{x}}^*(m)$ is not cuspidal. However, it turns out that some of the cuspidality survives. Namely, \Rami{we show, in \Cref{lem:cusp.pull} below, that} if $M<G$  is a Levi subgroup and $x\in M\cap G^{rss}$ then $\phi_x^*(m)$ is cuspidal w.r.t. parabolic subgroups corresponding to elements of the center $A$ of  $M$. We call this property $A$-cuspidality, see \Cref{def:A.cusp} below.
    \Rami{Moreover, if $x$ is elliptic in $M$ then $\phi_x^*(m)$ has compact support modulo $A$.}
    \item We prove a version of \Cref{thm:stab} for $A$-cuspidal functions \Rami{with compact support modulo $A$}, where the averaging is taken w.r.t. the multiplication action. See \Cref{thm:cusp.stab} below. In view of the previous step this is already enough in order to give the stabilization for each $x$ separately. However, we need a more uniform result.
    \item For elliptic $x\in M$ which is also an element of $G^{rss}$, we bound the support of  $\phi_x^*(m)$ modulo-$A$ in terms of the support of $m$. See \Cref{lem:pull} below. The proof of this lemma is based on the results of \S\ref{sec:norm.bnd} and it is essentially different from the proof in the zero characteristic case.
    \item The steps above give us \Cref{thm:stab} for the collection of all the elliptic elements in all the standard  Levi subgroups of $G$, which are regular semi-simple in $G$.
    \item In order to complete the proof we use the theory of Norm Descent Property developed in \cite{Kot}. We prove the Norm Descent Property of a certain map  (see \Cref{lem:Lev.NDP} below). This allows us to enhance the results above to obtain uniformity on $G^{rss}.$
\end{enumerate}

The second ingredient \DimaC{in the proof of \Cref{thm:Om.bnd.Av}} is the following:
\begin{introtheorem}[\S \ref{subsec:Pf.bnd.A}]\label{thm:bnd.A}
There exists a polynomial $\mdef{\alpha_{av}}\in \bN[t]$  
such that
for any
\begin{itemize}
    \item $m\in C^{\infty}(G)$ which has compact support modulo the center 
    \item 
$x\in G^{rss}$ 
\item $i\in \bN$ 
\end{itemize}
we have
$$|\cA_i(m)(x)|\leq \alpha_{av}(i+ov_{G^{rss}}(x))\Omega(|m|)(x).$$
\end{introtheorem}    
\Rami{We prove this theorem  using the results of \S\ref{sec:norm.bnd}.}
\subsection{Comparison to 
the characteristic zero case}
Our proof of \Cref{thm:Om.bnd.Av} is similar to 
the original Harish-Chandra's argument in the characteristic $0$ case with the following essential difference:
in the characteristic zero case, one can work for each torus separately, since there \RamiA{are} finitely many tori up to conjugation. This is not the case in positive characteristic. Therefore, we need to give more uniform bounds. For this we use the theory of norms developed in \cite{Kot} and prove uniform bounds on certain norms, see \S \ref{sec:norm.bnd}.

In more details, in the characteristic zero case, one can replace Theorems \ref{thm:stab} and \ref{thm:bnd.A} with the following less uniform versions: 

\begin{thm}[cf. {\cite[page 101]{HC_VD}}\footnote{the first display formula of page \cite[page 101]{HC_VD} gives a slightly weaker statement, but the proof of this formula in fact gives the theorem. }]\label{thm:stab.HC}
\Rami{Let $m\in C^{\infty}(G)^{cusp}$ be a cuspidal function which has compact support modulo the center}. Let $\Gamma<G$ be a maximal (not necessarily split)  torus. Then there exists a polynomial $\alpha^{m,\Gamma}_{ad-stab}\in\bN[t]$
such that for every:
\begin{itemize}
    \item $\gamma\in \Gamma\cap G^{rss}$
    \item $y\in G$
    \item $i>i_0:=\alpha^{m,\Gamma}_{ad-stab}(ov_{\Gamma\cap G^{rss}}(\gamma)+ov_G(y))$
\end{itemize}  we have 
    $$\cA_{i}(m)(y\gamma y^{-1})=\cA_{i_0}(m)(y\gamma y^{-1})$$
\end{thm}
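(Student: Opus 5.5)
The plan is to reduce to the case $y=1$, using that $\Gamma$ is a fixed torus. Write $\Gamma = Z_G(M')$-elliptic: there is a Levi subgroup $M$ (a centralizer of the split part $A$ of $\Gamma$) in which $\Gamma$ is an elliptic maximal torus. After conjugating once by a fixed element, we may assume $M$ is standard; this changes $ov$ only by a bounded amount.

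\textbf{Step 1: removing $y$.} Since $\cA_i(m)(y\gamma y^{-1})=\int_{(G^{ad})_i} m(gy\gamma y^{-1}g^{-1})\,dg$, substituting $g\mapsto gy^{-1}$ turns this into an integral of $\phi_\gamma^*(m)$ over the translate $(G^{ad})_i\cdot y$. The norm is submultiplicative, $ov_G(gy)\le ov_G(g)+ov_G(y)$ up to a constant. Hence
$$(G^{ad})_{i-c\,ov_G(y)}\subset (G^{ad})_i y\subset (G^{ad})_{i+c\,ov_G(y)}.$$
So it suffices to show that the integrals of $\phi_\gamma^*(m)$ over balls $(G^{ad})_j$, and over every set sandwiched between two such balls with $j\ge j_0(\gamma)$, all coincide. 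Concretely, I will prove that $\phi_\gamma^*(m)$ integrates to zero over every region of the form $\{ov \in (j,j']\}$ intersected with suitable unions of cosets. This pins down $\alpha^{m,\Gamma}_{ad-stab}$ as a polynomial, in fact linear, in $ov(\gamma)+ov_G(y)$.

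\textbf{Step 2: structure of $\phi_\gamma^*(m)$.} By the cuspidality of $m$ and the fact that $\gamma\in M$, the function $\phi_\gamma^*(m)$ is $A$-cuspidal. That is, for any parabolic $P=MU$ determined by a cocharacter of $A$, left-$U$-averages of $[g]\mapsto m(g\gamma g^{-1})$ vanish. This follows from the standard computation: for regular $\gamma$, the map $u\mapsto u\gamma u^{-1}\gamma^{-1}$ is a measure-preserving (up to the Jacobian $|\det(1-Ad\gamma)|_{\mathfrak u}|$) bijection of $U$.

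Moreover, since $\Gamma/A$ is compact and $m$ is compactly supported mod center, $\phi_\gamma^*(m)$ is supported on $A\cdot C_\gamma$, where $C_\gamma\subset G^{ad}$ is compact. The size of $C_\gamma$ is controlled by the distance of $\gamma$ from the singular set, i.e. by $\valua(\Delta(\gamma))$ and $ov(\gamma)$. For a single torus this is the classical Harish-Chandra estimate: near $\gamma$ the map $G/\Gamma\to G$ is submersive, with Jacobian $|D(\gamma)|$. I expect it to give a bound on $C_\gamma$ that is linear in $ov_{\Gamma\cap G^{rss}}(\gamma)$.

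\textbf{Step 3: stabilization for $A$-cuspidal functions.} Use the Iwasawa/Cartan decomposition $G=KA^{+}_{M}\cdot(\text{compact mod }A)$ relative to $A$. For $a\in A$ deep in a chamber, the ball boundary $\partial (G^{ad})_j$ near $K a C_\gamma$ is a union of full left $U$-cosets of the corresponding parabolic intersected with the support. This holds once $j$ exceeds a linear function of the size of $C_\gamma$, because $ov$ is then governed by the $A$-component. By $A$-cuspidality, the contributions from increasing $j$ then cancel.

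\textbf{Main obstacle.} The main difficulty is this last geometric claim: that for large $j$, the shells $(G^{ad})_{j+1}\setminus (G^{ad})_j$, and their $y$-translates, restricted to the support $A C_\gamma$, are unions of $U_P$-orbits for appropriate $P$. This must hold with threshold linear in the data. I would first try to verify it via the Cartan decomposition and the explicit formula $ov(g)=\max(-\valua(g_{ij}),\valua\det g)$. Any wall-region leftovers would be handled by induction on the Levi.
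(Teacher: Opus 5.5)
Your outline has the architecture the paper uses for its uniform \Cref{thm:stab}, which implies this statement, following Harish-Chandra. You conjugate into a standard Levi $M$ in which $\Gamma$ is elliptic, and note that $f:=\phi_\gamma^*(m)$ is $A$-cuspidal, right $A$-invariant and supported in a bounded set times $A$ (\Cref{lem:cusp.pull}, \Cref{lem:pull}). You then kill far shells, with an induction over subtori for the walls. But there is a genuine gap: the step carrying all the content, your Step 3, is only conjectured. The mechanism you propose (for large $j$ the norm is governed by the $A$-component, so shells are globally unions of $U$-orbits via a Cartan-type decomposition) is not what makes it work.

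The missing idea is to argue coset by coset. Balls in $G^{ad}$ are right $K_0$-invariant, and $y^{-1}K_0y$ contains a congruence subgroup $K'$ of level linear in $ov_G(y)$ (\Cref{lem:conj}). Hence each translated shell $\bigl((G^{ad})_{j+1}\setminus (G^{ad})_j\bigr)y$ is a disjoint union of cosets $zK'$, and it suffices to show $\int_{zK'}f=0$ whenever $ov_{G^{ad}}(z)$ is large. If $zK'$ meets the support you may take $z=ga$ with $g$ bounded and $a\in A$. Right $A$-invariance gives $\int_{zK'}f=\int_{aK'a^{-1}}f(g\,\cdot)$. The Iwahori factorization $aK'a^{-1}=(aK'a^{-1}\cap K_0)(aK'a^{-1}\cap V_{a^{-1}})$ (\Cref{lem:iva}) then reduces this to integrals of $u\mapsto f(hu)$, $h$ bounded, over $aK'a^{-1}\cap V_{a^{-1}}$.

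These integrals vanish by $A$-cuspidality as soon as that set contains the support of $u\mapsto f(hu)$. This is the precise form of your picture: the support of $f$ along $gV_{a^{-1}}$ lies inside the single coset $gK'$. It needs two quantitative inputs your outline lacks:
\begin{enumerate}
\item a bound on $V_{a}\cap(\text{bounded set})\cdot A$, uniform in $a$, coming from $V_a\times A/Z(G)\to G^{ad}$ being a closed embedding (\Cref{cor:int}, \Cref{cor:eff.cusp});
\item the fact that $aK'a^{-1}\cap V_{a^{-1}}$ contains a large ball of $V_{a^{-1}}$ once $a$ is far from every proper subtorus (\Cref{lem:cont}).
\end{enumerate}
If $a$ is not that deep, $z$ lies in a bounded neighbourhood of a proper subtorus $A''$, and you rerun the argument with $A''$; its groups $V_{a''}$ are still covered by $A$-cuspidality. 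The induction ends at $Z(G)$, where $z$ would be bounded in $G^{ad}$ (\Cref{lem:HC20.ex}, \Cref{lem:HC20.y.1.ind}, \Cref{lem:HC20}).

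Two further corrections.

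First, Step 1 read literally asks for too much. If $f$ had the same integral over every set sandwiched between $(G^{ad})_{j_0}$ and a larger ball, it would vanish outside $(G^{ad})_{j_0}$. That is false whenever $A\neq Z(G)$ and $f\neq 0$, since $f$ is right $A$-invariant. No sandwich is needed: $\cA_i(m)(y\gamma y^{-1})=\int_{(G^{ad})_iy}f$ exactly, so vanishing on the translated shells, as above, is all you need.

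Second, the bound on the support of $f$ modulo $A$ does not follow from submersivity of $G/\Gamma\to G$ or from the Jacobian $|D(\gamma)|$; those are inputs for the measure estimate of \Cref{thm:bnd.A.HC}. What you need is a bound on the conjugator,
$\|g\|_{G/\Gamma}\prec \max\bigl(\|g\gamma g^{-1}\|_G,\|\gamma\|_{\Gamma\cap G^{rss}}\bigr)$.
For a fixed torus this follows, e.g., because $(g\Gamma,\gamma)\mapsto(g\gamma g^{-1},\gamma)$ is a closed embedding of $G/\Gamma\times(\Gamma\cap G^{rss})$ into $G^{rss}\times(\Gamma\cap G^{rss})$, combined with \Cref{lem:pul.kot}. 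The paper's uniform replacement is \Cref{lem:pull}.
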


\begin{thm}[cf {\cite[2nd displayed formula in page 102]{HC_VD}}] \label{thm:bnd.A.HC}
Let $\Gamma<G$ be a maximal torus.
Then there exists a polynomial $\mdef{\alpha_{av}^{\Gamma}}\in\bN[t]$ such that
for any
\begin{itemize}
    \item $m\in C^{\infty}(G)$ which has compact support modulo the center 
    \item 
$\gamma\in \Gamma \cap G^{rss}$
    \item 
$y\in G$
\item $i\in \bN$ 
\end{itemize}
we have
$$|\cA_i(m)(y\gamma y^{-1})|\leq \alpha_{av}^{\DimaA{\Gamma}}(i+ov_{\Gamma \cap G^{rss}}(\gamma)+ov(y))\Omega(|m|)(y\gamma y^{-1}) .$$
\end{thm}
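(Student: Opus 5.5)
We prove the bound by comparing the truncated adjoint orbit $(G^{ad})_i\cdot x$ with the full conjugacy class, writing both as integrals over $G^{ad}/\Gamma'$ fibred by the torus. Put $x=y\gamma y^{-1}$ and $\Gamma'=y\Gamma y^{-1}=G_x$, and let $\bar\Gamma'$ be the image of $\Gamma'$ in $G^{ad}$. Unfolding the definition of $\cA_i$ and using $\mu_{G^{ad}}=\mu_{G^{ad}/\bar\Gamma'}\cdot\mu_{\bar\Gamma'}$, we get
$$\cA_i(m)(x)=\int_{G^{ad}/\bar\Gamma'} m(Ad(g)x)\,\mu_{\bar\Gamma'}\bigl(\{t\in\bar\Gamma' : gt\in (G^{ad})_i\}\bigr)\,d\dot g .$$
With the normalizations fixed in the introduction, $\Omega(|m|)(x)$ is the same integral of $|m(Ad(g)x)|$, taken with the quotient measure $\mu_{G}/\mu_{G_x}$. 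The two quotient measures differ only by the constant relating $\mu_{\bar\Gamma'}$ to $\mu_{G_x}/\mu_{Z(G)}$, and that constant depends only on $\Gamma$. It therefore suffices to bound
$$\sup_{g\in G^{ad}}\mu_{\bar\Gamma'}\bigl(g\bar\Gamma'\cap (G^{ad})_i\bigr)$$
by a polynomial in $i+ov_{\Gamma\cap G^{rss}}(\gamma)+ov(y)$.

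Next we conjugate back to $\Gamma$ and reduce to a statement about the torus alone. Since $ov$ is subadditive up to constants on products, we have $g\,y t y^{-1}\in G_i$ only if $(gy)\,t\in G_{i+C\cdot ov(y)}$. The set in question is therefore contained in a translate $h\bar\Gamma\cap (G^{ad})_{i'}$ with $i'=i+C\,ov(y)$. A key point is that $\gamma$ does not enter here at all: the measure estimate concerns only $\Gamma$. The dependence on $ov_{\Gamma\cap G^{rss}}(\gamma)$ in the statement is harmless slack, since the polynomial is allowed to depend on it.

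So the heart of the argument is the following: for a fixed maximal torus $\Gamma$ there is a polynomial $p$ such that $\mu_{\bar\Gamma}(\bar\Gamma\cap h^{-1}(G^{ad})_{j})\le p(j)$ for all $h$ and $j$. Write $\bar\Gamma=\bar\Gamma_c\cdot \bar A$, where $\bar\Gamma_c$ is compact and $\bar A\cong\bZ^r$ is the lattice of valuations of the $F$-split part. Modulo the compact part it suffices to count the lattice points $a\in\bar A$ with $ha\in(G^{ad})_j$. Suppose $ha$ and $ha'$ both lie in $(G^{ad})_j$. Then $a^{-1}a'=(ha)^{-1}(ha')$ lies in $(G^{ad})_{Cj}$, because $ov$ of an inverse is bounded linearly by $ov$ of the element (use the adjugate and the determinant). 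Hence the set of such $a$ is contained in a translate of $\bar A\cap (G^{ad})_{Cj}$. That set consists of lattice points whose valuation vector has norm $O(j)$, so it has $O(j^r)$ elements, with $r\le n-1$. This gives a polynomial bound of degree at most $n-1$, uniform in $h$.

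The main obstacle I expect is this last step. It requires checking that $ov$ restricted to the split part of $\Gamma$ genuinely controls the norm of the valuation vector: that is, a Kottwitz-norm comparison on $\Gamma$, where $ov|_\Gamma$ and the standard norm on $X_*(A)$ must be shown to be equivalent up to affine constants. It also requires the compact part to contribute uniformly bounded measure. For a fixed torus both facts follow from $\Gamma$ being a closed subvariety on which norms restrict to norms. It is exactly here that uniformity in $\Gamma$ fails in positive characteristic, which is why the paper needs the stronger \Cref{thm:bnd.A}.
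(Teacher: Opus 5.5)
Your proof is correct and is essentially the paper's own argument for the uniform \Cref{thm:bnd.A}, specialized to a single torus. Your disintegration over $G^{ad}/\bar\Gamma'$ is \Cref{cor:push}, and reducing to a torus ball by translating by a point of $g\bar\Gamma'\cap (G^{ad})_i$ is \Cref{cor:al.vol}. Your lattice-point count plays the role of \Cref{lem:meascomp}, where the paper uses \Cref{prop:norm} instead of a fixed-torus norm comparison. Your per-torus comparison is legitimate here, since $\alpha^{\Gamma}_{av}$ may depend on $\Gamma$. The statement also follows directly from \Cref{thm:bnd.A}, because $ov_{G^{rss}}(y\gamma y^{-1})$ is bounded by an affine function of $ov_{\Gamma\cap G^{rss}}(\gamma)+ov(y)$.

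One correction to your closing remark: the direction you actually need, that $ov$ controls the valuation vector on the torus, is already uniform in $\Gamma$. Indeed, every eigenvalue $\lambda$ of $t\in G_j$ satisfies $\ell^{-nj}\le|\lambda|\le \ell^{j}$, and \Cref{lem:meascomp} likewise uses only this easy direction of \Cref{prop:norm}. So the $\Gamma$-dependence at that step comes from how you phrased the count. The hard direction of \Cref{prop:norm} is what the paper needs for the stabilization \Cref{thm:stab}, through \Cref{lem:bnd.ell.cent}.
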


Harish-Chandra's proof of these 2 theorems works (with minor changes) for the positive characteristic case. However, while in the characteristic zero case these 2 theorems imply Theorems \ref{thm:stab} and \ref{thm:bnd.A}, this is no longer true in positive characteristic.
\RamiA{The reason is that, in this case, there are infinitely many conjugacy classes of tori.}

Therefore we use a different argument. Our argument \RamiA{is based on \cite{Kot},  \S \ref{sec:norm.bnd}, and ideas of the original Harish-Chandra's argument.}

\subsubsection{The role of the assumption $\bfG=\GL_n$}\label{sssec:gl}
We used the assumption $\bfG=\GL_n$ in order to make all explicit computations easier. However, our argument does not use any statement that inherently depend on this assumption (such as existence of mirabolic subgroup, stability of adjoint orbits, or the Richardson property of all nilpotent orbits).
\Rami{
\subsection{The Lie algebra case}
Harish-Chandra used the characteristic $0$ case of \Cref{thm:main} not only to prove integrability of characters of cuspidal representations, but also to  prove integrability of characters of general irreducible representations, see \cite{HC_SD}. More precisely, he used a Lie algebra version of this theorem. In positive characteristic, like in characteristic $0$, the proof of \Cref{thm:main} also fits its Lie algebra version.
Specifically one can prove the following:
\begin{customthm}[\S\ref{sec:lie}]{\ref{thm:main}'}
\label{thm:main.lie}
    Let $x\in \fg\fl_n(F)$ be an elliptic (regular semi-simple) element. Let $\mu$ be an $ad(G)$-invariant measure on $\fg\fl_n(F)$ supported in  $ad(G)x$. Let $\hat \mu\RamiA{\in C^{-\infty}(\fg\fl_n)}$ be its Fourier transform. 
    
    Then there exists a polynomial $\alp^\RamiA{\mu}\in\mathbb{N}[t]$
such that for every compact  open $B\subset \fg\fl_n(F)$ there exists $m\in C_c^\infty(\fg\fl_n(F))$ such that for every 
$\eta\in C^{\infty}_c(B)$ we have 
    $$|\langle \hat \mu,\eta\cdot \mu_\fg\rangle|< \langle f\cdot \Omega(m),(|\eta|\cdot \mu_\fg)|_{\fg^{rss}}\rangle,$$
    where $f \in C^{\infty}(\fg^{rss})$ is defined by 
    $f(x)=\alp^\RamiA{\mu}(ov_{\fg^{rss}}(x)),$ and $ov_{\fg^{rss}}$ is defined analogously to $ov_{G^{rss}}$.
\end{customthm}

In \S\ref{sec:lie} we explain how to \RamiA{adapt} the proof of \Cref{thm:main} in order to \RamiA{prove} \DimaA{\Cref{thm:main.lie}}. We also formulate there a Lie algebra version of \Cref{thm:Om.bnd.Av}.
}
\subsection{Structure of the paper}

In \S \ref{sec:Con-Nota} we fix some conventions and notation.

In \S \ref{ssec:pf.ChartAv} we prove \Cref{prop:ChartAv}.

In \S \ref{sec:Norms} we give an  overview of the theory of norms on algebraic varieties over local fields developed in \cite[\S18]{Kot}. In particular we recall the notion of norms and of the Norm Descent Property (NDP) of algebraic maps. 

In \S \ref{sec:norm.bnd}, we establish \RamiB{results} showing that a certain function defines a norm, and establish the NDP properties of certain maps (see  Lemmas \ref{lem:phi}, \ref {lem:Lev.NDP} and \Cref{prop:norm}). \RamiA{This  preparation is needed \RamiB{in order to make our bounds} more effective and thus torus independent.}

In \S \ref{subsec:Pf.bnd.A} we prove \Cref{thm:bnd.A}.

In \S \ref{ssec:norm.bnd} \RamiA{we adapt the results of \S \ref{sec:norm.bnd} to fit the needs of \Cref{thm:stab}}.

In \S \ref{subsec:Acusp} we \RamiA{discuss} the notion of $A$-cuspidal function where $A$ is the  center of a standard Levi-subgroup of $G$. This is ``what survives" from cuspidality when we pull a cuspidal function w.r.t. the adjoint action. The goal of this section is to prove \Cref{thm:cusp.stab} which is an analog of \Cref{thm:stab} for $A$-cuspidal functions.

In \S \ref{subsec:PfStab} we prove \RamiB{Theorems \ref{thm:stab} and \Cref{thm:Om.bnd.Av}}.

In \S \ref{sec:lie} we \Rami{\RamiA{discuss the} Lie algebra version of the main results. In particular we prove \Cref{thm:main.lie}, and formulate and prove \Cref{thm:Om.bnd.Av.lie}, which is a Lie algebra version of \Cref{thm:Om.bnd.Av}.}

\subsection{Acknowledgments} 
 During the preparation of this paper, A.A., D.G. and E.S. were partially supported by the ISF grant no. 1781/23. 
D.K. was partially supported by an ERC grant 101142781.

\section{Conventions and Notation}\label{sec:Con-Nota}
\subsection{Conventions}\label{ssec:conv}
\begin{enumerate}
    \item By a \tdef{variety} we mean a reduced scheme of finite type over $F$. 
    \item When we consider a fiber product of varieties, we always consider it in the category of schemes. \Rami{We use set-theoretical notations to define subschemes, whenever no ambiguity is possible.} 
    \item We will usually denote algebraic varieties by bold face letters (such as $\bfX$) and the spaces of their $F$-points by the corresponding usual face letters (such as $X:=\bfX(F)$). We use the same conventions when we want to interpret vector spaces as algebraic varieties.

    \item We will use the same letter to denote a morphism between algebraic varieties and the corresponding map between the sets of their $F$-points.
    \item We will use the symbol \tdef{$\square$} in a middle of a square diagram  in order to indicate that the square is Cartesian. 
    \item By an \tdef{$F$-analytic manifold} we mean an analytic manifold over $F$ in the sense  of \cite{Ser92}.
    \item\label{ssec:conv:5} As we are proving a theorem for $\GL_n$, many of the objects that we consider depend on the parameter $n$. Since we fixed $n$\RamiA{,} our notations will usually not include $n$. However, if we want to consider a certain object for different values of $n$, we will put these values as left supper scripts. For example $\mdef{{}^{k}\bfG}:=\GL_k$ and other uses of left-super-index such as $\mdef{{}^{k}\bfC}, \mdef{{}^{k}\bfS}$. 
    \item\label{ssec:conv:6} If the left superscript is a tuple of natural numbers then we refer to the product of the corresponding objects. For example $$\mdef{{}^{(k_1,k_2)}\bfG}:={}^{k_1}\bfG\times {}^{k_2}\bfG:=\GL_{k_1}\times \GL_{k_2}$$
    \item A \tdef{standard Levi subgroup} of $G$ (resp. $\bfG$)  is a subgroup consisting of block diagonal matrices in $G$ (resp. $\bfG$) with respect to a certain block partition.
    \item A \tdef{standard torus} of $G$ (resp. $\bfG$) is the center of a standard Levi subgroup of $G$ (resp. $\bfG$).    
    \item \Rami{When no ambiguity is possible  we will denote the adjoint action simply by $``\cdot"$.}
    \item \DimaB{We will use the symbol \tdef{$<$} to denote the (not necessarily proper) containment relation for groups}.
\end{enumerate}

\subsection{Notations}

We denote by:
\begin{enumerate}
\item $\mdef \bfT<\bfG$ the maximal standard torus. 
\item \RamiA{$T:=\bfT(F)$.}
\item For a composition $\lambda$ of $n$ denote by \tdef{$\bfT_\lambda$} the standard torus corresponding to this composition. \RamiA{Denote also $T_\lambda:=\bfT_\lambda(F)$.}
\item For a group (or an algebraic group) $H$ we denote by \tdef[$Z(\cdot)$]{$Z(H)$} the center of $H$.
    \item $\mdef{O_F}$ the ring of integers in $F$.
    \item $\mdef{K_0}:=\bfG(O_F)<G$ with respect to the standard $O_F$-structure on $\bfG$.
    \item $\mdef{K_i}<K_0$ the $i$-\Rami{th} congruence  subgroup.
    \item $\mdef{K_i^{ad}}=K_i/Z(K_i)$.
    \item $\mdef{\bfG^{ad}}:=\bfG/Z(\bfG)$. Note that $G^{ad}\RamiA{\lneq} \bfG^{ad}(F).$
        \item \tdef{$\bfC$}  $-$ the variety of monic polynomials of degree $n$ that do not vanish at $0$. We will identify it with $\bG_m\times \A^{n-1}$. 
            We equip $\bf C$ with a group structure using this identification.
        \item $\mdef{C}:=\bfC(F)$.   
        \item $\mdef{p}:\bfG\to \bfC$ $-$ the Chevalley map, i.e. the map that sends a matrix to its characteristic polynomial.
        \item  $\mdef{\mu_C}$ - the Haar measure on $C$, given by the identification $C\cong F^\times \times F^{n-1}$, normalized on the maximal compact subgroup of $C$.
    
    \item \tdef{$\mu_{C}$} the measure on $C$  corresponding to the standard Haar measure on $F^{\times}\times F^{n-1}$ under the standard identification $C\cong F^{\times}\times F^{n-1}$.    
    \item \tdef{$\Delta$} the discriminant considered as a regular function on $\bfG$.
    \item $\mdef{\bfG^{rss}}\subset \bfG$ the non-vanishing locus of $\Delta$. This is the locus of regular-semi-simple  elements.        
    
    \item $\mdef{G^{rss}}:=\bfG^{rss}(F)$.    
    \item $\mdef{G^{el}}$ the collection of elliptic elements in $G$. i.e. matrices whose characteristic polynomial is separable and irreducible.
    \item For a standard Levi subgroup $M<G$ we denote by $\mdef{M^{el}}$ the collection of elliptic elements in $M$, i.e.  block matrices with each of the blocks being elliptic.
    \item $\mdef{\bfC^{rss}}$ and $\mdef{C^{rss}}$ the images \RamiA{(under $p$)} of $\bfG^{rss}$ and $G^{rss}$ in $\bfC$ and $C$.
    \item $\mdef{p^{rss}}:\bfG^{rss}\to \bfC^{rss}$ the restriction of $p$.
    \item  $\mdef{\Delta_C}$  the discriminant considered as a function on $\bfC$.

    \item $\mdef{C^{el}}$ the image \RamiA{(under $p$)} of $G^{el}$ in $C$. 
    
    \item 
   Similarly $\bfC^{rss}$ \RamiA{(resp. $C^{rss}$)} is  identified with the collection of all separable polynomials in $\bfC$ \RamiA{(resp. $C$)}, and $C^{el}$ with the collection of all irreducible polynomials in $C^{rss}$.
\end{enumerate}

\section{Weak convergence of the averaging to the character - Proof of \Cref{prop:ChartAv}}\label{ssec:pf.ChartAv}
\DimaA{Set $d(\rho)$ to be the formal dimension of $\rho$, as defined in \cite[Theorem 1]{HC_VD} taken w.r.t. the Haar mesure $\mu_{G^{ad}}$. Let $m=m_{v,\varphi}$ be a matrix coefficient of $\rho$.}
\begin{enumerate}[{Case} 1.]
    \item $\rho$ is unitarizable.\\
Let $(\cdot,\cdot)$ be the inner product on $V_\rho$.
 Choose $u\in V_{\rho}$ such that $\varphi(\cdot)=( \cdot, u )$.
Then
$m(y)=(\rho(y)v,u)$. 

We will show that for any $\RamiA{f} \in C_{c}^{\infty}(G)$ we have
$$\langle \cA_{i}(m),f \mu_G \rangle \underset{i\to \infty} \longrightarrow  \frac{m(1)}{d(\rho)}\langle 
 \chi_{\rho},f \mu_G \rangle $$
Notice that
$$\langle \cA_{i}(m),f \mu_G\rangle =\int_{G} f(x) \left (\int_{(G^{ad})_i} m(gxg^{-1})dg \right) dx,$$
where $dg$ is the measure $\mu_{G^{ad}}$.
As $(G^{ad})_i$ is compact we can interchange the order of iterated integration:
\begin{align*}
\langle \cA_{i}(m),f\mu_G\rangle &=\int_{G^{ad}_i}
\left (\int_{G} f(x) m(gxg^{-1})dx\right) dg\\
&=\int_{G^{ad}_i} \left  (\int_{G} f(x)(\rho(gxg^{-1})v,u)dx \right)dg
\end{align*}
By  \cite[Theorem 9]{HC_VD}:
$$(v,u)\langle \chi_{\rho}, f \mu_G\rangle=d(\rho)\int_{G^{ad}}\left(\int_{G} f(\RamiA{x})(\rho(g^{-1}\RamiA{x}g)v,u)d\RamiA{x} \right)dg$$
The result follows.
    \item The general case.\\
    Let $w_\rho$ be the central character of $\rho$. we can write  $w_\rho=w_1w_2$ where $w_1$ is a unitary character of $Z(G)$ and $w_2\RamiA{:=|w_\rho|}$ is a  character of $Z(G)$ that can be extended to a character $w'$ of $G$. Let $\rho_1=(w')^{-1}\rho$. It is easy to see that $\rho_1$ is unitarizable. The assertion follows now from the previous case.
\end{enumerate}
\DimaA{
\begin{remark}
    We use \cite[Theorems 1 and 9]{HC_VD} in the proof. Formally, \cite{HC_VD} assumes characteristic zero, but the proofs of these results do not depend on this assumption.
\end{remark}
}

\section{Norms}\label{sec:Norms}
In this section, we recall basic parts of the theory of norms developed in \cite[\S18]{Kot}.
We will use the following notions from \cite[\S18]{Kot}.
\begin{enumerate}
    \item An \tdef{abstract norm} on a set $Z$ is a real-valued
function $||\cdot ||_Z$ on $Z$ such that $||x||_Z\geq 1$ for all $x \in Z$. 

\item For two abstract norms $||\cdot||_Z^1,||\cdot||_Z^2$ on $Z$ we say that $||x||_Z^1\mdef{\prec}||x||_Z^2$ if there is a constant $c>1$ such that $||x||_Z^1<c(||x||_Z^2)^c$.

    \item We say that two abstract norms $||\cdot||_Z^1,||\cdot||_Z^2$ on $Z$ are equivalent if  $||x||_Z^1\prec||x||_Z^2\prec||x||_Z^1$.
We denote this as 
     $$||\cdot||_Z^1 \mdef{\sim} ||\cdot||_Z^2.$$
    \item Let $\bfM$ be an algebraic variety. In \cite[\S18]{Kot} there is a definition of a canonical equivalence class of abstract norms in $M=\bfM(F)$. The abstract norms in this class are called norms on $M$.    
    \item For a map $\phi:Z_1\to Z_2$ between sets and abstract norms $||\cdot||_{Z_1}, ||\cdot||_{Z_2}$ on these sets, \cite[\S18]{Kot} defines norms $\phi_*(||\cdot||_{Z_1})$ and $\phi^*(||\cdot||_{Z_2})$ on $Im(\phi)$ and $Z_1$  correspondingly by $\mdef[\phi_*(\vert\vert\cdot\vert\vert)]{\phi_*}(||\cdot||_{Z_1})(z):=\inf_{y\in \phi^{-1}(\RamiA{z})}||y||_{Z_1}$, and $\mdef[\phi^*(\vert\vert\cdot\vert\vert)]{\phi^*}(||\cdot||_{Z_2})(y):=||\phi(y)||_{Z_2}$. 
    \item We say that a map $\phi:\bfM\to\bfN$ of algebraic varieties satisfies the \tdef{Norm Descent Property} (in short \tdef{NDP}) if for any two norms $||\cdot||_{M}$ and $||\cdot||_{N}$ on  $M:=\bfM(F)$ and $N:=\bfN(F)$ we have 
    $$(||\cdot||_{N})|_{\phi(M)} \sim \phi_*(||\cdot||_{M}).$$
\end{enumerate}
\begin{notation}\label{not:ov.and.ball}

As a rule, we will put the domain of definition of a norm in a subscript in the notation for that norm.
Given an abstract norm $||\cdot||_X$ on a set $X$, we denote:
\begin{enumerate}
    \item  by $\mdef{ov}_X:X\to \R$ the map given by $ov_X(x)=\log_\ell(||x||_X)$.
\item For $i \in \mathbb{Z}$, $\mdef[\bullet_i]{X_i}:=\{x\in X\, \vert ov_X(x)\leq  i\}.$ Note that this notation might be ambiguous with the notation $K_i$, but we will not consider norms on $K$  so that there is no actual ambiguity.
\end{enumerate}
If we consider more than one norm on the same set $X$, we will distinguish between them and the related notation $ov_X$ and $X_i$ using super-scripts.
\end{notation}

\begin{notation}
    For two sets $Z_1,Z_2$, and abstract norms $||\cdot ||_{Z_1},||\cdot ||_{Z_2}$ on them we define an abstract norm $\mdef{\vert\vert\cdot\vert\vert_{Z_1}\times \vert\vert\cdot\vert\vert_{Z_2}}:=||\cdot||_{Z_1\times {Z_2}}$, where 
    $$||(a,b)||_{{Z_1}\times {Z_2}}:=\max(||a||_{Z_1},||b||_{Z_2}).$$
\end{notation}
We will also use the following facts from \cite[\S 18]{Kot}.

\begin{lemma}[{\cite[Proposition 18.1(1)]{Kot}}]\label{lem:pul.kot}
Given \Dima{a morphism of algebraic varieties} $\phi:\bfM\to\bfN$ and norms $||\cdot||_M$ on $M:=\bfM(F)$ and $||\cdot||_N$ on $N:=\bfN(F)$, we have 
    \begin{enumerate}
        \item\label{lem:pul.kot:1}   $\phi^*(||\cdot||_N)\prec ||\cdot||_M.$
    \item If $\phi$ is a finite map then $\phi^*(||\cdot||_N)\sim ||\cdot||_M.$
    \end{enumerate}
\end{lemma}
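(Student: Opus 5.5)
The plan is to reduce both parts to the affine case and then to explicit norms built from regular functions, following Kottwitz's construction in \cite[\S18]{Kot}. For an affine variety $\bfM$ with generators $f_1,\dots,f_r$ of $F[\bfM]$, a representative norm is $\|x\|_M=\max(1,\max_j|f_j(x)|)$. In general, one covers $\bfM$ by finitely many affine opens $\bfU_k$ and takes the infimum, over those $k$ with $x\in U_k$, of a norm on $U_k$ that blows up near the boundary; by \cite[\S18]{Kot} the resulting equivalence class does not depend on the choices. Since $\prec$ and $\sim$ are insensitive to replacing norms by equivalent ones, it suffices to prove both statements for one convenient representative on each side.

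For (1), I first treat affine $\bfM$ and $\bfN$. If $g_1,\dots,g_s$ generate $F[\bfN]$, then each $\phi^*g_l$ is a polynomial $P_l$ in the $f_j$. The ultrametric inequality gives
$$|P_l(f(x))|\le c\,\|x\|_M^{\deg P_l},$$
with $c$ the maximum of the absolute values of the coefficients. Taking the maximum over $l$ yields $\|\phi(x)\|_N\le c\|x\|_M^{d}$, which is $\phi^*\|\cdot\|_N\prec\|\cdot\|_M$. For general $\bfM$ and $\bfN$, I refine the affine cover of $\bfM$ so that each piece maps into an affine piece of $\bfN$. I then check that the boundary terms compare the same way: a function blowing up near the boundary of $U$ in $\bfN$ pulls back to a function that is bounded by a power of the corresponding local norm on the preimage. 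This gluing step is routine bookkeeping on the localization data in Kottwitz's definition.

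For (2), the direction $\prec$ is part (1), so it remains to show $\|\cdot\|_M\prec\phi^*\|\cdot\|_N$. Since $\phi$ is finite, it is affine, so over an affine open $\bfV\subset\bfN$ the preimage $\phi^{-1}(\bfV)$ is affine and $F[\phi^{-1}\bfV]$ is a finite $F[\bfV]$-module. Each generator $f_j$ is therefore integral, satisfying
$$f_j^k+a_1f_j^{k-1}+\dots+a_k=0,\qquad a_i\in F[\bfV].$$
The standard root bound for monic polynomials over a non-Archimedean field then gives
$$|f_j(x)|\le\max_i|a_i(\phi(x))|^{1/i}\le c\,\|\phi(x)\|_V^{d}.$$
Hence $\|x\|_{\phi^{-1}V}\prec\|\phi(x)\|_V$. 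Finiteness also ensures that the preimage of the boundary of $\bfV$ is the boundary of $\phi^{-1}\bfV$, so the boundary terms match and the local estimates glue.

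The main obstacle is the gluing over non-affine covers, namely checking that the infimum-over-charts definition interacts correctly with pullback. If this becomes messy, I would instead appeal directly to the functoriality statements in \cite[Proposition 18.1]{Kot}, which is exactly where this lemma is cited from.
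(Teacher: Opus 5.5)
Your proposal is correct and is essentially the standard argument behind \cite[Proposition 18.1]{Kot}, which the paper cites without giving a proof. It uses polynomial growth of pulled-back generators in the affine case for (1), the non-Archimedean root bound for integral elements for (2), and gluing via the infimum over affine charts. One small correction: in (2) no statement about boundaries is needed, since for any map the preimage of the complement of $V$ is the complement of $\phi^{-1}(V)$. What finiteness actually supplies is that the sets $\phi^{-1}(V_l)$ are affine, so they form an affine cover of $M$ on which your local integrality estimate holds; taking the infimum over those $l$ with $\phi(x)\in V_l$ then gives $\|x\|_M\prec\|\phi(x)\|_N$ directly.
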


\begin{cor}
\label{lem:fin.ndp}
Finite maps satisfy the NDP.
\end{cor}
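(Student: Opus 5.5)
Let $\phi:\bfM\to\bfN$ be a finite morphism and fix norms $||\cdot||_M$ on $M=\bfM(F)$ and $||\cdot||_N$ on $N=\bfN(F)$. We must show
$$(||\cdot||_N)|_{\phi(M)}\sim\phi_*(||\cdot||_M).$$
The plan is to prove the two inequalities separately. Each one should follow from one of the two parts of \Cref{lem:pul.kot} by passing to infima or suprema over fibers.

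\textbf{The bound $\prec$ in one direction.} By \Cref{lem:pul.kot}(\ref{lem:pul.kot:1}) there is a constant $c>1$ such that
$$||\phi(y)||_N<c\,||y||_M^c\quad\text{for all } y\in M.$$
Fix $z\in\phi(M)$. For every $y\in\phi^{-1}(z)$ we then have $||z||_N<c\,||y||_M^c$. Taking the infimum over the fiber gives
$$||z||_N\le c\,\big(\phi_*(||\cdot||_M)(z)\big)^c.$$
The infimum turns the strict inequality into a non-strict one. To recover the strict form required by the definition of $\prec$, we replace $c$ by $c+1$; this works since norms are $\geq 1$. Hence $||\cdot||_N|_{\phi(M)}\prec\phi_*(||\cdot||_M)$. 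Note that this direction holds for every morphism, not only finite ones.

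\textbf{The bound $\prec$ in the other direction.} Since $\phi$ is finite, the second part of \Cref{lem:pul.kot} gives $||\cdot||_M\prec\phi^*(||\cdot||_N)$. So there is $c>1$ with
$$||y||_M<c\,||\phi(y)||_N^c\quad\text{for all } y\in M.$$
Fix $z\in\phi(M)$ and pick any $y\in\phi^{-1}(z)$; such a $y$ exists because $z$ lies in the image. Then
$$\phi_*(||\cdot||_M)(z)\le||y||_M<c\,||z||_N^c.$$
This gives $\phi_*(||\cdot||_M)\prec||\cdot||_N|_{\phi(M)}$.

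Combining the two bounds yields the equivalence, which is exactly the NDP for $\phi$. No step here is a real obstacle: all the content sits in \Cref{lem:pul.kot}(2). The only point that needs care is the restriction to $\phi(M)$, where the fibers are nonempty, so that the infimum is finite.
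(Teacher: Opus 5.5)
Your proof is correct and is essentially the paper's argument: the paper states this as an immediate corollary of \Cref{lem:pul.kot}, and you unpack that deduction explicitly. You use part (1) with an infimum over fibers for one inequality and part (2) with an arbitrary preimage for the other, and you correctly handle the strict-inequality convention in the definition of $\prec$.
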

\begin{lem}[{cf. \cite[18.4]{Kot}}]\label{lem:Kot.comp}
Consider morphisms 
$$
\bfM_1 \xrightarrow{f} \bfM_2  \xrightarrow{g} \bfM_3
$$
of algebraic varieties. 
Assume that the map $f : \bfM_1({F}) \to \bfM_2({F})$ is surjective. Then:
\begin{enumerate}    \item\label{lem:Kot.comp:1} If $f$ and $g$ satisfy the norm descent property, then so does $g \circ f$.
    \item\label{lem:Kot.comp:2} If $g \circ f$ satisfies the norm descent property, then so does $g$.
\end{enumerate}
\end{lem}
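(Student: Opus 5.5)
Both parts follow by comparing the two sides of the NDP equivalence, with $f^*$ of norms handled by \Cref{lem:pul.kot}. Fix norms $\|\cdot\|_{M_j}$ on $M_j:=\bfM_j(F)$, $j=1,2,3$. The basic observation uses surjectivity of $f$ on $F$-points: $(g\circ f)(M_1)=g(M_2)$, and
$$(g\circ f)_*(\|\cdot\|_{M_1})(z)=\inf_{y\in g^{-1}(z)\cap M_2}\ f_*(\|\cdot\|_{M_1})(y).$$
Hence $(g\circ f)_*\|\cdot\|_{M_1}=g_*\bigl(f_*\|\cdot\|_{M_1}\bigr)$, where $f_*\|\cdot\|_{M_1}$ is an abstract norm defined on all of $M_2$ because $f$ is surjective.

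We also need that $g_*$ respects $\prec$. Suppose $\|\cdot\|^1\prec\|\cdot\|^2$ on $M_2$ with constant $c$. For $z\in g(M_2)$ and any $y\in g^{-1}(z)$ we have $g_*\|\cdot\|^1(z)\le \|y\|^1< c(\|y\|^2)^c$. Taking the infimum over $y$, and using that $t\mapsto ct^c$ is monotone and continuous, gives $g_*\|\cdot\|^1\prec g_*\|\cdot\|^2$. It follows that $g_*$ sends equivalent norms to equivalent norms.

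For (1), the NDP for $f$ says $f_*\|\cdot\|_{M_1}\sim \|\cdot\|_{M_2}$ on $f(M_1)=M_2$. Applying $g_*$ gives
$$(g\circ f)_*\|\cdot\|_{M_1}=g_*f_*\|\cdot\|_{M_1}\sim g_*\|\cdot\|_{M_2}.$$
By the NDP for $g$, the right-hand side is equivalent to $\|\cdot\|_{M_3}|_{g(M_2)}$. This proves (1).

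For (2), first note that $g_*\|\cdot\|_{M_2}\succ \|\cdot\|_{M_3}|_{g(M_2)}$ holds for any morphism. Indeed, \Cref{lem:pul.kot}(\ref{lem:pul.kot:1}) gives $\|g(y)\|_{M_3}\prec\|y\|_{M_2}$, and we take the infimum over $y\in g^{-1}(z)$, again using monotonicity of $t\mapsto ct^c$. For the reverse bound, \Cref{lem:pul.kot}(\ref{lem:pul.kot:1}) applied to $f$ gives $f^*\|\cdot\|_{M_2}\prec\|\cdot\|_{M_1}$, i.e. $\|f(x)\|_{M_2}<c\|x\|_{M_1}^c$. Taking the infimum over $x\in f^{-1}(y)$ yields $\|\cdot\|_{M_2}\prec f_*\|\cdot\|_{M_1}$ on $M_2$. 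Applying $g_*$ and the NDP for $g\circ f$, we get
$$g_*\|\cdot\|_{M_2}\prec g_*f_*\|\cdot\|_{M_1}=(g\circ f)_*\|\cdot\|_{M_1}\sim\|\cdot\|_{M_3}|_{g(M_2)}.$$
Together with the first bound, this proves (2).

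The only delicate points are the bookkeeping identity $(g\circ f)_*=g_*f_*$, which is exactly where surjectivity of $f$ on $F$-points is used, and the justification that the infimum commutes with the monotone function $ct^c$. Neither presents a real obstacle.
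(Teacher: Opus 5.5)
Your proof is correct. The key ingredients are the identity $(g\circ f)_*\|\cdot\|_{M_1}=g_*\bigl(f_*\|\cdot\|_{M_1}\bigr)$ together with $(g\circ f)(M_1)=g(M_2)$ (both using surjectivity of $f$ on $F$-points), the fact that $g_*$ preserves $\prec$, and the automatic bounds from \Cref{lem:pul.kot}(\ref{lem:pul.kot:1}).

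The paper gives no argument of its own and only cites \cite[18.4]{Kot}. Your direct verification from the definitions is the standard one. One cosmetic point: taking infima turns the strict inequality in the definition of $\prec$ into a non-strict one. This is fixed by enlarging the constant $c$, since all norms are at least $1$.
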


\begin{lemma}[{\cite[Theorem 18.2]{Kot}}]\label{lem:kot.sec}
Let $\gamma : \bfM \to \bfN$ be a morphism algebraic varieties. For an open subset $\bfU \subseteq \bfN$, write $\gamma_\bfU$ for the morphism $\gamma^{-1}(\bfU) \to U$ obtained by restriction from $\gamma$.

\begin{enumerate}
    \item The norm descent property for $\gamma: \bfM \to \bfN$ is local
    with respect to the Zariski topology on $\bfN$. In other words, for
    any cover of $\bfN$ by affine open subsets, the morphism $\gamma$ has 
    the norm descent property if and only if the morphisms $\gamma_\bfU$ 
    have the norm descent property for every member $\bfU$ of the open 
    cover.
    
    \item If the morphism $\gamma : \bfM \to \bfN$ admits a section, then $\gamma$ has the norm descent property. More generally, if $\gamma : \bfM \to \bfN$ admits sections locally in the Zariski topology on $\bfN$, then $\gamma$ has the norm descent property.
\end{enumerate}

\end{lemma}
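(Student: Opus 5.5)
The second assertion reduces to the first part of \Cref{lem:pul.kot}, and the first reduces to a comparison between a norm on a variety and norms on the members of an affine open cover. Throughout I use Kottwitz's construction of norms. On an affine $\bfM$ with generators $f_1,\dots,f_k$ of $F[\bfM]$ one sets $||x||_M=\max(1,|f_1(x)|,\dots,|f_k(x)|)$. On a general $\bfM$ a norm is glued from a finite affine open cover. Two facts follow from this construction.

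\begin{enumerate}
\item[(a)] If $\bfN=\bigcup_j \bfU_j$ is a finite affine open cover, then
$$||y||_N\sim \min_{j:\,y\in U_j}||y||_{U_j}.$$
\item[(b)] For a principal open $\bfU=\bfN_g$ one has
$$||y||_U\sim \max\left(||y||_N,\ |g(y)|^{-1}\right),$$
since $F[\bfN_g]$ is generated by $F[\bfN]$ together with $g^{-1}$.
\end{enumerate}

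Note also that the inequality $(||\cdot||_N)|_{\gamma(M)}\prec\gamma_*(||\cdot||_M)$ always holds. Indeed, by \Cref{lem:pul.kot}(\ref{lem:pul.kot:1}) we have $||\gamma(x)||_N\prec||x||_M$ with constants uniform in $x$, so taking the infimum over the fibre gives it. Hence NDP amounts only to the reverse bound $\gamma_*(||\cdot||_M)\prec ||\cdot||_N$ on $\gamma(M)$.

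For assertion (2), let $s$ be a section of $\gamma$. For $y\in \gamma(M)=N$ we get $\gamma_*(||\cdot||_M)(y)\le ||s(y)||_M$. By \Cref{lem:pul.kot}(\ref{lem:pul.kot:1}) applied to $s$, this is $\prec ||y||_N$, which gives NDP. If instead sections $s_j$ exist only over the members $\bfU_j$ of a Zariski cover, the same argument gives NDP for each $\gamma_{\bfU_j}$. The general case then follows from assertion (1).

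For assertion (1), first refine any affine cover to a finite cover by principal opens $\bfN_{g_j}$. This is harmless, because by (b) the NDP for a map into an affine open is inherited by its restriction to a principal open inside it.

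To pass from the local statements to the global one, assume each $\gamma_{\bfU_j}$ satisfies NDP. Fix $y\in\gamma(M)$ and pick $j$ achieving the minimum in (a). Then
$$\gamma_*(||\cdot||_M)(y)\le \gamma_*(||\cdot||_{\gamma^{-1}U_j})(y)\prec ||y||_{U_j}\sim ||y||_N .$$
The first inequality uses $||\cdot||_M\prec||\cdot||_{\gamma^{-1}U_j}$, which is \Cref{lem:pul.kot}(\ref{lem:pul.kot:1}) for the inclusion. There are finitely many $j$, so the constants can be made uniform.

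For the converse, assume $\gamma$ satisfies NDP and let $\bfU=\bfN_g$. By (b) applied on both sides, for $x\in\gamma^{-1}(U)$ we have
$$||x||_{\gamma^{-1}U}\sim\max\left(||x||_M,\ |g(\gamma x)|^{-1}\right).$$
The second term is constant on fibres of $\gamma$. Taking the infimum over $\gamma^{-1}(y)$ therefore gives
$$\gamma_*(||\cdot||_{\gamma^{-1}U})(y)\sim \max\left(\gamma_*(||\cdot||_M)(y),\ |g(y)|^{-1}\right)\prec \max\left(||y||_N,\ |g(y)|^{-1}\right)\sim||y||_U .$$

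The main obstacle is establishing (a) and (b): that norms on a variety are comparable, up to $\sim$, with norms on the members of a finite affine cover and on principal opens. This requires unwinding Kottwitz's definition in terms of bounded subsets and checking that the exponent constants can be chosen uniformly over the finitely many pieces. I would first prove (b) directly from the generator description, and then deduce (a) using a bounded refinement of the cover.
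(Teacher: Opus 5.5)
The paper gives no proof of this lemma; it is quoted from \cite[Theorem 18.2]{Kot}. Your outline is the standard one. NDP only requires the bound $\gamma_*(\|\cdot\|_M)\prec\|\cdot\|_N$ on $\gamma(M)$. A section supplies this bound by functoriality (Lemma~\ref{lem:pul.kot}). Locality comes from comparing $\|\cdot\|_N$ with norms on affine charts. Granting your fact (a), your treatment of (2) and of the ``if'' half of (1) is correct. Two places are not yet complete, however.

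First, the whole ``if'' direction rests on the inequality $\min_{j:\,y\in U_j}\|y\|_{U_j}\prec\|y\|_N$ in (a). Saying ``deduce (a) using a bounded refinement of the cover'' only restates this inequality. The idea that proves it is the unit-ideal estimate:
\begin{itemize}
\item Suppose an affine $\mathbf{W}$ is covered by principal opens $\mathbf{W}_{g_1},\dots,\mathbf{W}_{g_r}$, and write $\sum_j h_jg_j=1$ with $h_j\in F[\mathbf{W}]$.
\item Then $1\le\max_j|h_j(x)|\cdot\max_j|g_j(x)|$.
\item So the index $j$ maximizing $|g_j(x)|$ satisfies $x\in W_{g_j}$ and $|g_j(x)|^{-1}\le\max_j|h_j(x)|\prec\|x\|_W$.
\item Hence, by (b), $\|x\|_{W_{g_j}}\prec\|x\|_W$, with constants independent of $x$.
\end{itemize}
Any two affine opens meet in a union of opens that are principal in both. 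Applying the estimate to the finitely many such opens shows that the ``min over charts'' norms of any two finite affine covers are equivalent, which is (a). Without this estimate, (a) is an assertion rather than a proved step.

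Second, your ``only if'' argument treats only $\bfU=\bfN_g$ with $g$ a global function, that is, a principal open of an affine target. An affine member of a cover of a non-affine $\bfN$ is not of this form. Neither is a non-principal affine open of an affine $\bfN$. You also apply (b) to $\bfM$, which need not be affine.

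A uniform fix works for any open $\bfU\subseteq\bfN$. The map $x\mapsto(x,\gamma(x))$ identifies $\gamma^{-1}(\bfU)=\bfM\times_{\bfN}\bfU$ with a closed subvariety of $\bfM\times\bfU$; this uses that $\bfN$ is separated. So Lemma~\ref{lem:pul.kot}, together with the max-norm on products, gives $\|x\|_{\gamma^{-1}(U)}\sim\max(\|x\|_M,\|\gamma(x)\|_U)$. Your infimum computation then goes through verbatim. For $y\in\gamma(M)\cap U$ it yields $\gamma_*(\|\cdot\|_{\gamma^{-1}(U)})(y)\sim\max(\gamma_*(\|\cdot\|_M)(y),\|y\|_U)\prec\max(\|y\|_N,\|y\|_U)\prec\|y\|_U$. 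This gap does not affect the rest of the paper, which only uses the ``if'' direction, via (2), in Lemmas~\ref{lem:phi} and~\ref{lem:Lev.NDP}.
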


\begin{lemma}[\RamiA{cf.} {\cite[Lemma 18.9]{Kot}}]
    For any torus $\bfS<\bfG$, the map  $\bfG\to \bfG/\bfS$  has the NDP.
\end{lemma}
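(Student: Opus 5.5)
The plan is to produce sections of $\bfG\to\bfG/\bfS$ locally in the Zariski topology on $\bfG/\bfS$, and then to apply Lemma \ref{lem:kot.sec}(2). The map $\pi:\bfG\to\bfG/\bfS$ is an $\bfS$-torsor, with $\bfS$ acting by right multiplication. So the first step is to check when such torsors are Zariski-locally trivial.

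\emph{Split case.} Suppose $\bfS\cong\bG_m^k$; this covers the standard tori $\bfT_\lambda$, which are the main ones the paper uses. By Hilbert's Theorem 90, every $\bG_m$-torsor over a scheme is Zariski-locally trivial. Hence so is every $\bG_m^k$-torsor, since it is a fiber product of $\bG_m$-torsors. Consequently $\bfG/\bfS$ admits an affine open cover $\{\bfU_j\}$ together with sections $s_j:\bfU_j\to\pi^{-1}(\bfU_j)$. Lemma \ref{lem:kot.sec}(2) then gives the NDP for $\pi$.

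\emph{General case.} Every torus $\bfS<\GL_n$ lies in a maximal torus $\bfT'$, and $\bfT'\cong\prod_i \mathrm{Res}_{E_i/F}\bG_m$ is an induced torus. Induced tori are special groups: by Shapiro's lemma and Hilbert 90, their torsors are Zariski-locally trivial. Therefore $\bfG\to\bfG/\bfT'$ has the NDP by the same argument. I would then factor
$$\bfG\xrightarrow{\ \pi\ }\bfG/\bfS\xrightarrow{\ q\ }\bfG/\bfT'.$$
Here $q$ is a $\bfT'/\bfS$-fibration, and $\bfT'/\bfS$ is again a torus. The aim is to deduce the NDP of $\pi$ from the NDP of $q\circ\pi$ together with control along the fibers of $q$. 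Concretely, take a point $z\in G/S$ with $q(z)$ of small norm. Lift $q(z)$ to some $g\in G$ of comparable norm. Then $z=g\cdot t$ for some $t\in(\bfT'/\bfS)(F)$, and we need a lift $\tilde t\in T'$ of $t$ with $\|\tilde t\|$ bounded polynomially in $\|z\|$.

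The main obstacle is this last step: the map $T'\to(\bfT'/\bfS)(F)$ need not be surjective on $F$-points. Its image, however, has finite index, and the map is surjective with polynomial control onto that image. I would verify this using valuations of characters: on tori, norms are equivalent to $\max|\langle\chi,\cdot\rangle|$ over a basis of characters, which reduces the statement to linear algebra on cocharacter lattices modulo compact subgroups.

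Alternatively, I would follow Kottwitz's proof of \cite[Lemma 18.9]{Kot} directly. That proof handles exactly this reduction, and it should be insensitive to the characteristic, since it only uses Hilbert 90 and the structure of tori.
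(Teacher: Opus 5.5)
Your proposal is correct, but it does not follow the paper. The paper gives no argument for this lemma: it simply invokes \cite[Lemma 18.9]{Kot}, with the ``cf.'' signalling that Kottwitz's argument carries over to arbitrary characteristic. Your closing sentence is exactly this route.

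Your main argument is instead a self-contained proof that exploits $\bfG=\GL_n$. Every $F$-torus $\bfS$ lies in a maximal $F$-torus $\bfT'$. For $\GL_n$, such a $\bfT'$ has the form $\mathrm{Res}_{E/F}\bG_m$ for an \'etale algebra $E$. Hence $\bfG\to\bfG/\bfT'$ has Zariski-local sections by Shapiro's lemma plus Hilbert 90, and Lemma \ref{lem:kot.sec} applies. All the non-surjectivity on $F$-points is then isolated in the torus map $T'\to(\bfT'/\bfS)(F)$. There, NDP follows from the Harish-Chandra map $H_T$ together with one fact: $H_{T'}(S)$ is a full lattice in $\ker(\mathfrak a_{T'}\to\mathfrak a_{T'/S})$, which comes from exactness of Galois invariants on $\mathbb{Q}$-vector spaces.

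Two points should be made explicit when you write this out.
\begin{enumerate}
\item You need $\|t\|$ bounded polynomially in $\max(\|g\|,\|z\|)$, and you do not say why. It holds because $q$ is a $\bfT'/\bfS$-torsor. So $t=\delta(\pi(g),z)$ for a morphism $\delta$ on $\bfG/\bfS\times_{\bfG/\bfT'}\bfG/\bfS$, and Lemma \ref{lem:pul.kot} then gives the bound.
\item The norm on a torus is not $\max_\chi|\chi(t)|$ but $\max_\chi\max(|\chi(t)|,|\chi(t)|^{-1})$. Here $\chi$ runs either over a basis of the $F$-rational characters, using that $\ker H_T$ is compact, or over absolute characters with the absolute value of a splitting field.
\end{enumerate}
The finite-index remark is not needed, and the split case is subsumed by the general one.

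As for what each route buys: yours is transparent and visibly characteristic-free, using only Hilbert 90 and the structure of tori. The citation buys brevity and independence from $\GL_n$. For a general reductive group, maximal tori need not be induced, so your argument would have to replace $\bfT'$ by an abstract induced torus containing $\bfS$.
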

The following \Rami{two lemmas are} straightforward:
\begin{lem}\label{lem:cart.NDP}
    Let    $$\begin{tikzcd}
\bfM_1   \arrow[r,"f"] \arrow[d] \arrow[dr, phantom, "\square"] & \bfM_2   \arrow[d] \\
\bfM_3  \arrow[r,"g"] & \bfM_4 
\end{tikzcd}
$$
be a Cartesian square of algebraic varieties. If $g$ has NDP then so does $f$.
\end{lem}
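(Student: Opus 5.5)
The plan is to unwind the definitions and compare the two pushforward norms on the image of $f$. Write $\bfM_1=\bfM_3\times_{\bfM_4}\bfM_2$. Let $p_2:\bfM_1\to\bfM_2$ denote the map $f$, and let $p_3:\bfM_1\to\bfM_3$ and $h:\bfM_2\to\bfM_4$ be the other two maps of the square. Since $\bfM_1$ is a closed subvariety of $\bfM_3\times\bfM_2$ and closed embeddings are finite, \Cref{lem:pul.kot} allows us to take as norm on $M_1$ the restriction of $\|\cdot\|_{M_3}\times\|\cdot\|_{M_2}$. Here the norm on a product of varieties is equivalent to the max of the factor norms; this is standard in \cite[\S18]{Kot}. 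So, up to equivalence, $\|(a,b)\|_{M_1}=\max(\|a\|_{M_3},\|b\|_{M_2})$ for pairs with $g(a)=h(b)$.

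Next I would identify the image and compare norms on it. On $F$-points the square is still Cartesian, so $f(M_1)=\{b\in M_2: h(b)\in g(M_3)\}$. One inequality is automatic from \Cref{lem:pul.kot}(1): $\|\cdot\|_{M_2}|_{f(M_1)}\prec f_*\|\cdot\|_{M_1}$. For the reverse inequality, fix $b\in f(M_1)$ and set $z=h(b)\in g(M_3)$. By the NDP for $g$, there is $a\in g^{-1}(z)$ with $\|a\|_{M_3}\le c\|z\|_{M_4}^c$. Applying \Cref{lem:pul.kot}(1) to $h$ gives $\|z\|_{M_4}\prec\|b\|_{M_2}$. Then $(a,b)\in M_1$ and
$$f_*\|\cdot\|_{M_1}(b)\le\max(\|a\|_{M_3},\|b\|_{M_2})\prec\|b\|_{M_2}.$$
Since the infimum in the definition of $g_*$ need not be attained, I would choose $a$ within a factor of $2$ of that infimum; this is harmless.

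Combining the two inequalities gives $\|\cdot\|_{M_2}|_{f(M_1)}\sim f_*\|\cdot\|_{M_1}$, which is the NDP for $f$. The only point needing care is the choice of norm on the fiber product: one must justify that Kottwitz's norms are compatible with closed embeddings and with products. I expect this to be quoted directly from \cite[\S18]{Kot} rather than reproved, so there is no real obstacle.
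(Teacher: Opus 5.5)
Your argument is correct, and it is the routine verification the paper has in mind when it calls this lemma straightforward (no proof is written there): you equip $M_1\subset M_3\times M_2$ with the max-norm, obtain $\|\cdot\|_{M_2}|_{f(M_1)}\prec f_*(\|\cdot\|_{M_1})$ from Lemma~\ref{lem:pul.kot}, and for the reverse bound lift $h(b)$ through $g$ using the NDP of $g$. The only implicit input is that $\bfM_1\to\bfM_3\times\bfM_2$ is a closed embedding, which requires $\bfM_4$ to be separated; this holds for all the varieties occurring in the paper, since they are affine.
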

\begin{lem}\label{lem:comp.emb.NDP}
Consider morphisms 
$$
\bfM_1 \xrightarrow{f} \bfM_2  \xrightarrow{g} \bfM_3
$$
of algebraic varieties. 
Assume that the map $g : \bfM_2({F}) \to \bfM_3({F})$ is injective. If $f$ and $g$ satisfy the NDP, then so does $g \circ f$.
\end{lem}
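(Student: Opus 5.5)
The argument rests on one observation: injectivity of $g$ on $F$-points makes the fibers of $g\circ f$ coincide with fibers of $f$. Once that is in place, the two norm descent properties can be chained directly. Surjectivity of $f$, as in \Cref{lem:Kot.comp}, is not needed here.

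Fix norms $||\cdot||_{M_1}$, $||\cdot||_{M_2}$, $||\cdot||_{M_3}$ on $M_j:=\bfM_j(F)$. We must show that on $g(f(M_1))$
$$(||\cdot||_{M_3})|_{g(f(M_1))}\sim (g\circ f)_*(||\cdot||_{M_1}).$$

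First I would identify the pushforward norm. Take $z\in g(f(M_1))$. Since $g$ is injective on $F$-points, there is a unique $y\in M_2$ with $g(y)=z$, and this $y$ lies in $f(M_1)$. Hence $(g\circ f)^{-1}(z)=f^{-1}(y)$. Taking the infimum over this common fiber gives
$$(g\circ f)_*(||\cdot||_{M_1})(z)=f_*(||\cdot||_{M_1})(y).$$
Similarly $g^{-1}(z)=\{y\}$, so $g_*(||\cdot||_{M_2})(z)=||y||_{M_2}$.

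Next I would apply the two hypotheses. The NDP for $f$ gives $f_*(||\cdot||_{M_1})(y)\sim ||y||_{M_2}$ for $y\in f(M_1)$. The NDP for $g$ gives $||y||_{M_2}=g_*(||\cdot||_{M_2})(z)\sim ||z||_{M_3}$ for $z\in g(M_2)$. Restricting the second relation to the subset $g(f(M_1))\subset g(M_2)$, and transporting the first along the bijection $y\leftrightarrow z$ between $f(M_1)$ and $g(f(M_1))$, both become relations among abstract norms on $g(f(M_1))$.

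It remains to check that restriction to a subset and the bijective transport preserve $\sim$, which is immediate from the definitions, and that $\prec$ is transitive. For transitivity, suppose $a<c\,b^c$ and $b<c'e^{c'}$, with all quantities at least $1$ and $c,c'>1$. Then
$$a<c\,(c')^c e^{cc'}\leq C e^{C},\qquad C:=\max\bigl(c\,(c')^c,\,cc'\bigr),$$
using $e\geq 1$. Chaining the two equivalences then yields $(g\circ f)_*(||\cdot||_{M_1})\sim ||\cdot||_{M_3}$ on $g(f(M_1))$, which is the NDP for $g\circ f$.

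No step here is a real obstacle. The only point needing care is the fiber identification, since that is exactly where injectivity of $g$ on $F$-points enters.
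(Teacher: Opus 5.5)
Your argument is correct, and it is the routine verification the paper has in mind when it calls this lemma straightforward; the paper gives no proof. Injectivity of $g$ on $F$-points gives $(g\circ f)^{-1}(z)=f^{-1}(y)$ and $g^{-1}(z)=\{y\}$, after which the two equivalences chain by transitivity of $\prec$, and your remark that surjectivity of $f$ (needed in \Cref{lem:Kot.comp}) is unnecessary here is accurate.
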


We fix the following norms:
\begin{enumerate}
    \item $\mdef[\vert\vert\cdot \vert\vert_F]{||x||_F}=\max(|x|,1)$ 
    \item $\mdef[\vert\vert\cdot \vert\vert_{F^n}]{||\cdot||_{F^k}}:=||\cdot ||_F\times \cdots\times ||\cdot ||_F$. We will use this notation for affine spaces that are equipped with standard basis.

 \item
$\mdef[\vert\vert\cdot \vert\vert_{G}]{||g||_G}=\max\{||g||_{\fg},||\det g||_{\RamiB{F}}^{-1}\},$ \Rami{where $\fg=\fg\fl_n(F)=\mathrm{Mat}_{n\times n}(F)$.}
 \item $\mdef[\vert\vert\cdot \vert\vert_{C}]{||f||_C}=\max(||f||_\fc,||f(0)^{-1}||),$
 \Rami{where $\fc$ is the space of monic polynomials of degree $n$ with coefficients in $F$.}
 \item  $\mdef[\vert\vert\cdot \vert\vert_{G^{ad}}]{||\cdot||_{G^{ad}}}:=ad_*(||\cdot||_{G})$ where $ad:G\to G^{ad}$ is the quotient map.
\item $\mdef[\vert\vert\cdot \vert\vert_{G^{rss}}]{||g||_{G^{rss}}}=\max(||g||_{G},|\Delta(g)^{-1}|)$.
\item $\mdef[\vert\vert\cdot \vert\vert_{C^{ad}}]{||f||_{C^{rss}}}:=\max(||f||_{C},||\Delta(f)^{-1}||).$
\item For any standard torus $A<T$  set  $\mdef{\vert\vert\cdot\vert\vert_{G/A}}:=pr_*(||\cdot||_G)$ where $pr:G\to G/A$ is the projection.
\item For any other variety $\bfV$ we choose a norm $\mdef[\vert\vert\cdot\vert\vert_{V}]{||\cdot||_{V}}$
on $V:=\bfV(F).$  
\end{enumerate}


\section{Some bounds on norms}\label{sec:norm.bnd}

The role of this section is to provide bounds on norms, that enable us to make Harish-Chandra's bounds uniform on the entire group. 

\subsection{Statements and ideas of proof}
In this subsection we formulate some norm bounds - Lemmas \ref{lem:phi} and \ref {lem:Lev.NDP}, and \Cref{prop:norm}. These will be important for \RamiA{the} proof of \Cref{thm:Om.bnd.Av}. 
 


\begin{lemma}\label{lem:phi}
The map $\phi:\bfG\times \bfG^{rss}\to  \bfG^{rss}\times \bfG^{rss}$ 
given by $\mdef \phi(g,h)=(g^{-1}hg,h)$  has the NDP.
\end{lemma}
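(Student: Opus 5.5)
The plan is to factor $\phi$ through the closed subvariety of pairs with equal characteristic polynomial. There I will produce explicit Zariski-local sections, and then invoke \Cref{lem:kot.sec} together with \Cref{lem:comp.emb.NDP}. Set
$$\bfY:=\bfG^{rss}\times_{\bfC^{rss}}\bfG^{rss}=\{(x,h)\in \bfG^{rss}\times\bfG^{rss}\,|\,p(x)=p(h)\}.$$
This is a closed subvariety of $\bfG^{rss}\times\bfG^{rss}$. It is reduced, indeed smooth, because $p^{rss}$ is smooth. Clearly $\phi$ factors as $\bfG\times\bfG^{rss}\xrightarrow{\phi'}\bfY\hookrightarrow \bfG^{rss}\times\bfG^{rss}$. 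The closed embedding is finite, so it has the NDP by \Cref{lem:fin.ndp}, and it is injective on $F$-points. By \Cref{lem:comp.emb.NDP} it therefore suffices to show that $\phi'$ has the NDP. By \Cref{lem:kot.sec}(2), for that it is enough to show that $\phi'$ admits sections Zariski locally on $\bfY$.

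To construct the sections I will use cyclic vectors. For $v\in F^n$ and $h\in\bfG$, put $\psi_v(h):=(v,hv,\dots,h^{n-1}v)$, the matrix with these columns. Let $\bfU_v\subset\bfG^{rss}$ be the open subset where $\det\psi_v(h)\neq0$. On $\bfU_v$ we have $\psi_v(h)^{-1}h\psi_v(h)=s(p(h))$, where $s:\bfC\to\bfG$ is the companion-matrix map. Every regular semisimple element over $\bar F$ has a cyclic vector, so $\bigcap_{v\in \bar F^n}\{\det\psi_v=0\}\cap\bfG^{rss}=\emptyset$.

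I need the same statement with $v$ restricted to $F^n$. The condition $\det\psi_v(h)=0$ is polynomial in $(v,h)$ jointly, and $F^n$ is Zariski dense in $\A^n$. Hence the vanishing of $\det\psi_v(h)$ for all $v\in F^n$ forces its vanishing for all $v\in\bar F^n$. So the intersection over $v\in F^n$ is already empty. By Noetherianity, finitely many $v_1,\dots,v_r\in F^n$ suffice, so the $\bfU_{v_j}$ cover $\bfG^{rss}$.

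The opens $\bfY_{jk}:=\bfY\cap(\bfU_{v_k}\times\bfU_{v_j})$ then cover $\bfY$. On $\bfY_{jk}$ I define
$$\sigma_{jk}(x,h):=\bigl(\psi_{v_j}(h)\,\psi_{v_k}(x)^{-1},\,h\bigr).$$
Using $p(x)=p(h)$, both $x$ and $h$ are conjugate to the same companion matrix $s(p(h))$. A direct check then gives $g^{-1}hg=x$ for $g=\psi_{v_j}(h)\psi_{v_k}(x)^{-1}$, so $\phi'\circ\sigma_{jk}=\mathrm{id}$. These sections are algebraic, since $\psi_{v_k}(x)^{-1}$ is regular on $\bfU_{v_k}$. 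This gives the local sections and hence the NDP of $\phi'$, and so of $\phi$. As a byproduct, $\phi'$ is surjective on $F$-points.

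The step I expect to be the only real subtlety is the covering of $\bfG^{rss}$ by finitely many $\bfU_v$ with $v$ an $F$-rational vector. This is handled by the density and Noetherianity argument above. A secondary point to verify is that the scheme-theoretic fiber product $\bfY$ is reduced, so that it is a variety in the sense of \S\ref{ssec:conv}; smoothness of $p$ on the regular locus gives this.
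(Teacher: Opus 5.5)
Your proof is correct and is essentially the paper's argument: you use the same cyclic-vector matrices $[v,hv,\dots,h^{n-1}v]$ with $v\in F^n$, with the covering obtained from Zariski density of $F^n$. You also use the same passage through the closed embedding $\bfG^{rss}\times_{\bfC^{rss}}\bfG^{rss}\hookrightarrow\bfG^{rss}\times\bfG^{rss}$ via \Cref{lem:comp.emb.NDP}, and the same ``ratio'' idea. The only difference is bookkeeping: you write the local sections $\psi_{v_j}(h)\psi_{v_k}(x)^{-1}$ of $\phi'$ down directly, whereas the paper first proves the NDP for $a(g,f)=gc(f)g^{-1}$ and transfers it to $\phi$ via a Cartesian square, \Cref{lem:cart.NDP} and \Cref{lem:Kot.comp}\eqref{lem:Kot.comp:2}, so your version is slightly more direct.
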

 We will prove this lemma in \S\ref{ssec:pf.phi}. The proof is based on the notion of companion matrix.
\begin{lemma}\label{lem:Lev.NDP}
    Let $\bfM<\bfG$ be a standard Levi. Then the adjoint action map $\bfG\times (\bfM\cap \bfG^{rss})\to \bfG^{rss}$ has NDP.
\end{lemma}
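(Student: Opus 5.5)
We will verify the two inequalities in the definition of NDP separately. Write $a:\bfG\times(\bfM\cap\bfG^{rss})\to\bfG^{rss}$, $a(g,m)=gmg^{-1}$. The inequality $||\cdot||_{G^{rss}}|_{a(\ldots)}\prec a_*(||\cdot||_{G\times (M\cap G^{rss})})$ is immediate from \Cref{lem:pul.kot}(\ref{lem:pul.kot:1}) applied to the morphism $a$. So the content is the reverse inequality. Given $y$ in the image of $a$, we must produce $g'\in G$ and $m'\in M\cap G^{rss}$ with $g'm'g'^{-1}=y$ and $\max(||g'||_G,||m'||_{M\cap G^{rss}})\prec ||y||_{G^{rss}}$. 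We construct $m'$ first and then $g'$.

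\emph{Step 1: choosing $m'$ via companion matrices.} Let $M={}^{(k_1,\dots,k_r)}G$ and write $y=gmg^{-1}$ with $m=(m_1,\dots,m_r)\in M\cap G^{rss}$. Put $f_j:=p(m_j)\in {}^{k_j}C$, so $p(y)=\prod_j f_j$. The multiplication map $\mu:{}^{(k_1,\dots,k_r)}\bfC\to\bfC$ is finite. Hence by \Cref{lem:pul.kot}(2) we have $\max_j||f_j||_{{}^{k_j}C}\prec ||p(y)||_C\prec ||y||_G$, where the last bound is again \Cref{lem:pul.kot}(\ref{lem:pul.kot:1}).

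Let $m'$ be the block diagonal matrix whose $j$-th block is the companion matrix of $f_j$. The companion construction is a morphism ${}^{k_j}\bfC\to{}^{k_j}\bfG$, so $||m'||_G\prec\max_j||f_j||$. Moreover $p(m')=p(y)$, so $\Delta(m')=\Delta(y)$, and therefore $||m'||_{G^{rss}}\prec ||y||_{G^{rss}}$. Since $M\cap\bfG^{rss}$ is closed in $\bfG^{rss}$, this also bounds a norm of $m'$ on $M\cap G^{rss}$. This is the step I expect to need the most care: the discriminant factor in the norm must be controlled, and it is, precisely because $\Delta(m')=\Delta(y)$ exactly.

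\emph{Step 2: choosing $g'$ via \Cref{lem:phi}.} Both $m'$ and $y$ are regular semisimple with the same characteristic polynomial. In $\GL_n$ they are therefore conjugate over $F$ (both are $F$-conjugate to the companion matrix of $p(y)$). Thus $(y,m')$ lies in the image of $\phi(h,x)=(h^{-1}xh,x)$ from \Cref{lem:phi}. By the NDP of $\phi$ there is $h\in G$ with $h^{-1}m'h=y$ and
\[
||h||_G\prec \max(||y||_{G^{rss}},||m'||_{G^{rss}})\prec ||y||_{G^{rss}}.
\]
Setting $g'=h^{-1}$, we have $||g'||_G\prec||h||_G$, since inversion is an automorphism of $\bfG$ and \Cref{lem:pul.kot} applies. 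Then $a(g',m')=y$, and combining Steps 1 and 2 gives $a_*(||\cdot||)(y)\prec ||y||_{G^{rss}}$, which proves the NDP.
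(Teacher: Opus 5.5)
Your proof is correct and is essentially the paper's argument, with the preimage constructed by hand. The paper chains the same three ingredients through the formal NDP lemmas on Cartesian squares and compositions: finiteness of polynomial multiplication, the companion-matrix section, and \Cref{lem:phi} base-changed to the Levi. You instead produce $(g',m')$ explicitly, and your observation that $\Delta(m')=\Delta(y)$ lets you avoid the coprime regular semisimple locus used in the paper, which is a harmless simplification.
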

We will prove this lemma in \S\ref{ssec:pf.Lev.NDP}. The proof is based on the previous lemma and the NDP property of products of polynomials (\Cref{lem:mfin}).

\begin{notation}\label{not:norms.com}
$ $
\begin{itemize} 
\item Define
$\mdef{\bfCom^{rss}}:=\{(x,y)\in \bfG^{rss}\times \bfG \, \vert \, xy=yx\}$. \Rami{Note that it is reduced (in fact even smooth), so we consider it as a variety.}

\item 
For any $x\in G^{rss},$ define an abstract norm $||\cdot ||_{G_x}'$ in the following way. Let $\prod p_i$ be the decomposition of the characteristic polynomial of $x$ into irreducible monic polynomials. Identify $G_x$ with $\prod E_i^{\times},$ where $E_i=F[t]/p_i$. Let $||\cdot||'_{E^{\times}_i}=\max(|\cdot|_{E_i},|\cdot|_{E_i}^{-1})$. 
    Using the identification above, define 
    \begin{equation}       
    \mdef{\vert\vert\cdot \vert\vert'_{G_x}}:=||\cdot||'_{E_1^{\times}}\times \cdots\times ||\cdot||'_{E_k^{\times}}
        \end{equation}

\item     Define $\mdef[\vert\vert\cdot\vert\vert^R_{Com^{rss}}]{||\cdot||^R_{Com^{rss}}}$ by 
        \begin{equation}       
||(x,y)||^R_{Com^{rss}}:=||y||'_{G_x}
    \end{equation}
\item     Define $\mdef{\vert\vert\cdot\vert\vert'_{Com^{rss}}}$ by 
        \begin{equation}       
||\cdot||'_{Com^{rss}}:=\max(pr^*||\cdot||_{G^{rss}},||\cdot||^R_{Com^{rss}}),
    \end{equation}       
    \RamiA{where $pr:Com^{rss}\to G^{rss}$ is the projection on the first coordinate.}
\end{itemize}
\end{notation}

\begin{prop}\label{prop:norm}
    $||\cdot||'_{Com^{rss}}$  is a norm.
\end{prop}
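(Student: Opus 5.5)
We will show that $||\cdot||'_{Com^{rss}}$ is equivalent to the restriction of a norm on the ambient variety. Since $\bfCom^{rss}$ is a closed subvariety of the affine variety $\bfG^{rss}\times\bfG$, by Lemma \ref{lem:pul.kot} (closed embeddings are finite) the function
$$N(x,y):=\max(||x||_{G^{rss}},||y||_G)$$
is a norm on $Com^{rss}$. The factor $||x||_{G^{rss}}$ appears in both $N$ and $||\cdot||'_{Com^{rss}}$. So it suffices to prove two inequalities, uniformly in $(x,y)\in Com^{rss}$:
$$||y||'_{G_x}\prec ||y||_G \qquad\text{and}\qquad ||y||_G\prec \max(||x||_{G^{rss}},||y||'_{G_x}).$$

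First we translate $||y||'_{G_x}$ into eigenvalue language. Extend the absolute value of $F$ to an algebraic closure $\bar F$. Under $G_x\cong\prod E_i^\times$, the component of $y$ in $E_i$ has absolute value $|y|_{E_i}$. This equals $\max|\mu|^{d_i}$ (or $\max|\mu|$, depending on the normalization), where $\mu$ runs over the eigenvalues of $y$ attached to the roots of $p_i$, since all Galois conjugates have the same absolute value. Because $d_i\le n$, we get
$$||y||'_{G_x}\sim \max_\mu \max(|\mu|,|\mu|^{-1}),$$
the maximum running over all eigenvalues $\mu$ of $y$ in $\bar F$.

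For the first inequality, each eigenvalue $\mu$ of $y$ is a root of the characteristic polynomial of $y$. Its coefficients have absolute value at most $||y||_G^{n}$, so the ultrametric Newton-polygon bound gives $|\mu|\le ||y||_G^{n}$. Similarly, $\mu^{-1}$ is an eigenvalue of $y^{-1}=\det(y)^{-1}\operatorname{adj}(y)$, whose entries are bounded by $||y||_G^{n}$. This yields $|\mu|^{-1}\le ||y||_G^{n^2}$. This direction does not use $x$ at all.

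For the second inequality, we use that $x$ is regular semisimple, so every $y$ commuting with $x$ lies in $F[x]$. Write $y=q(x)$, where $q$ is the Lagrange interpolation polynomial
$$q(t)=\sum_j \mu_j\prod_{k\ne j}\frac{t-\lambda_k}{\lambda_j-\lambda_k},$$
with $\lambda_j$ the eigenvalues of $x$ and $\mu_j=q(\lambda_j)$ the corresponding eigenvalues of $y$. The polynomial $q$ has coefficients in $F$ by Galois equivariance, but we only need absolute-value bounds in $\bar F$.

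We then bound the coefficients of $q$, which is the main step.
\begin{itemize}
\item The numerators are bounded by $\max|\mu_j|\cdot\max(1,|\lambda|)^{n}$.
\item We have $|\lambda|\le||x||_G^{n}$.
\item For the denominators, $\prod_{k\ne j}|\lambda_j-\lambda_k|\ge |\Delta(x)|/\max(1,|\lambda|)^{2n^2}$, because $\Delta(x)=\pm\prod_{j\ne k}(\lambda_j-\lambda_k)$ and each omitted factor is bounded by $\max(1,|\lambda|)$.
\end{itemize}
Hence every coefficient of $q$ is bounded by a fixed power of $\max(||x||_{G^{rss}},||y||'_{G_x})$. Since $y=\sum_{i<n}c_ix^i$, each entry of $y$ is bounded by a fixed power of the same quantity. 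Finally, $|\det y|^{-1}=\prod|\mu_j|^{-1}\le (||y||'_{G_x})^{n}$, up to the normalization exponent. This gives $||y||_G\prec \max(||x||_{G^{rss}},||y||'_{G_x})$.

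The only delicate points are keeping the exponents uniform and independent of the factorization type of the characteristic polynomial of $x$. The lower bound on the Lagrange denominators via $\Delta(x)$ is precisely what makes the estimate uniform in $x$.
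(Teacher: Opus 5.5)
Your proof is correct, and it takes a genuinely different and much more direct route than the paper.

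\textbf{The paper's route.} The paper never looks at eigenvalues of $y$. It transports the problem along $\theta(x,y)=(p(x),\xi(x,y))$, using a Cartesian square over $p:G^{rss}\to C^{rss}$, to the space $S^{\times}$ of coprime pairs $(f,g)$ with $f\in C^{rss}$ and $\deg g<n$. There it proves $||\cdot||_{S^\times}\sim||\cdot||'_{S^\times}$ in stages:
\begin{itemize}
\item For irreducible $f$, the easy direction comes from $|g \bmod f|_{F[t]/f}=|\mathrm{res}(f,g)|_F$, which exhibits that part of the norm as a pullback of the norm on $F^\times$.
\item For irreducible $f$, the hard direction is an induction on $\deg g$. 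The leading coefficient is bounded by a Galois-conjugation argument: if it were too large, every root of $f$ would lie very close to a distinct root of $g$, which is impossible since $\deg g<n$, given the root-separation bound coming from $\Delta$.
\item Reducible $f$ are handled by induction on $n$, using the Chinese-remainder isomorphism and the NDP of the finite polynomial-multiplication maps.
\end{itemize}

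\textbf{Your route.} You pass to $\bar F$ at once. There $G_x$ splits, and $||y||'_{G_x}$ becomes, up to the exponent $d_i\le n$, the quantity $\max_\mu\max(|\mu|,|\mu|^{-1})$. The easy direction is then the ultrametric root bound applied to $y$ and to $y^{-1}$. The hard direction is Lagrange interpolation, i.e.\ the explicit inverse of the Vandermonde system, with $|\Delta(x)|^{-1}\le||x||_{G^{rss}}$ controlling the denominators.

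\textbf{What each buys.} Your approach gives uniformity in the factorization type of $p(x)$ for free: no elliptic/non-elliptic case split, no Chinese remainder step, and no induction on $n$ or on $\deg g$. It also yields explicit exponents. The paper's route stays entirely within Kottwitz's formalism of norms and NDP over $F$, and it produces the intermediate statement $||\cdot||_{S^\times}\sim||\cdot||'_{S^\times}$ about pairs of polynomials. The price is a much longer argument. Both proofs ultimately rest on the same input: $\Delta$ bounds the separation of the roots of $p(x)$ from below.
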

Subsections \ref{ssec:root.bnd}-\ref{ssec:pf.prop.norm} are devoted to the proof of this proposition, the proof itself is in \S \ref{ssec:pf.prop.norm}. Let us first summarize the idea of the proof. The intuitive meaning of the proposition is that given an element $x\in G^{rss}$ and an element $y$ in its centralizer, we can bound (from above and below) the value $||y||'_{G_x}$ in terms of the norm of the pair $(x,y)\in Com^{rss}$. 
The value $||y||'_{G_x}$ can be interpreted as the ``distance" of $y$ from the maximal compact subgroup of the centralizer of $x$.
\begin{enumerate}[Step 1.]
    \item Consider the set $S^{el}:=C^{el}\times P_{n-1}$ where $P_{n-1}$ is the collection of polynomials of degree $\leq n-1$. For each $f\in C^{el}$ we can identify $P_{n-1}$ with the field extension $F[t]/f$. This gives us an abstract norm $||\cdot||'_{S^{el}}$ on $S^{el}$ analogous to the norm $||\cdot||_{Com^{rss}}$, see \Cref{not:S} (\ref{not:S:5}-\ref{not:S:8}) below. We  prove that this abstract norm is equivalent to the restriction of the chosen norm on $S:=C^{rss}\times P_{n-1}$. This step is performed in \Cref{lem:elsim}.

    The intuitive meaning of this lemma is that given a monic irreducible polynomial $f$ and a polynomial $g$ we can bound the norm of $g$ considered as an element in the field $F[t]/f$ in terms of the norm of the pair $(f,g)\in S$.

    The proof of this lemma is based on an inductive argument and on a bound on the minimal distance between roots of a polynomial $f\in C^{rss}$ in terms of its norm in $C^{rss}$, that is given in \Cref{cor:delta.root} below.    
    \item We consider a subset $S^\times \subset S$ consisting of pairs of co-prime polynomials. We also consider the set $S^{el,\times} :=S^\times\cap S^{el}$. We define an abstract norm $||\cdot ||'_{S^{el,\times}}$ analogously to $||\cdot ||'_{S^{el}}$ (see \Cref{not:cross} below)  and prove that it is equivalent to the restriction of the chosen norm on  $S^{\times}$. This step is performed in \Cref{cor:cross}.
    \item Analogously we construct an abstract norm $||\cdot||'_{S^{\times}}$ (see \Cref{not:cross} below) and prove that it is a norm. This step is performed in \Cref{lem:norms.S.cross}.
    \item We deduce the proposition. The relation between the previous step and this proposition is given by a map $\theta:Com^{rss}\to S^{\times}$ defined by $\theta(x,y)=(p(x),g)$ where $g$ is a polynomial such that $y=g(x)$. This step is performed in \S\ref{ssec:pf.prop.norm}.
\end{enumerate}

\subsection{Proof of \Cref{lem:phi}}\label{ssec:pf.phi}
For the proof we will need the following lemma:
\begin{lemma}\label{lem:covering}
    Let $\bfV$ be a vector space and $\bfM$ be an affine variety. Let $f\in\cO(\bfM\times \bfV)$ be  a regular function. Assume that for any $x\in \bfM(\bar F)$ the function $f|_{x\times \bfV(\bar F)}$  is not identically $0$. Then the collection $$\{\bfM^{v}:=\{x|f(x,v)\neq 0\}|v\in \bfV(F)\}$$ covers $\bfM$.
\end{lemma}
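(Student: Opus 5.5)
The sets $\bfM^v$ are Zariski open, and $\bfM$ is quasi-compact, so the claim is purely about $\bar F$-points: every point of $\bfM(\bar F)$ should lie in some $\bfM^v$ with $v$ an $F$-rational vector. Equivalently, the closed subvariety $\bfZ:=\bigcap_{v\in \bfV(F)}\{x \,\vert\, f(x,v)=0\}$ should be empty. My plan is to argue by contradiction: suppose $x\in \bfZ(\bar F)$, and show that then $f|_{x\times \bfV(\bar F)}$ vanishes identically, contradicting the hypothesis.

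Fix coordinates $v_1,\dots,v_k$ on $\bfV$ and write $f(x,v)=\sum_{\beta} a_\beta(x)v^\beta$ as a polynomial in $v$ whose coefficients $a_\beta\in \cO(\bfM)$ are regular functions. For a fixed $x\in \bfM(\bar F)$, the function $v\mapsto f(x,v)$ is then a polynomial $P_x(v)=\sum_\beta a_\beta(x)v^\beta$ with coefficients in $\bar F$.

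If $x\in \bfZ(\bar F)$, then $P_x$ vanishes on all of $\bfV(F)=F^k$. Since $F$ is infinite, a polynomial over the field $\bar F\supset F$ that vanishes on $F^k$ is the zero polynomial; this follows by the standard induction on $k$, using that a nonzero one-variable polynomial has only finitely many roots. Hence all $a_\beta(x)=0$, so $f|_{x\times\bfV(\bar F)}\equiv 0$, which contradicts the assumption.

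Therefore $\bfZ(\bar F)=\emptyset$. Since $\bfZ$ is a closed subscheme of finite type over $F$, the Nullstellensatz gives $\bfZ=\emptyset$, so the open sets $\bfM^v$ cover $\bfM$. By quasi-compactness, finitely many of them already suffice, which is the form needed for the Zariski-local NDP argument via \Cref{lem:kot.sec}.

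The argument has no real obstacle. The only point needing care is that the covering is tested on $\bar F$-points, which is exactly why the hypothesis is stated over $\bar F$, while only $F$-rational vectors $v$ are used; the passage between the two is the infinitude of $F$.
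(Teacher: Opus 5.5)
Your proposal is correct and is essentially the paper's argument. The paper's one-line proof appeals to the Zariski density of $\bfV(F)$ in $\bfV$. Your key step, that a polynomial over $\bar F$ vanishing on $F^k$ is zero because $F$ is infinite, is exactly that density fact. You also spell out two points the paper leaves implicit: the Nullstellensatz passage from the emptiness of $\bfZ(\bar F)$ to the emptiness of $\bfZ$, and the finiteness of the cover.
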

\begin{proof}
Follows from the fact that $\bfV(F)$ is Zariski dense in $\bfV$.
\end{proof}
\begin{proof}[Proof of \Cref{lem:phi}]
Let $c:C\to G$ be the map defined by mapping a polynomial to its companion matrix. Let $a:G\times C^{rss}\to G^{rss}$ be the map defined by $a(g,f):=gc(f)g^{-1}$. 
    \begin{enumerate}[Step 1.]
        \item $a$ is onto on the level of $F$-points, and has the norm descent property.\\
        By \Cref{lem:kot.sec},  it is enough to construct a section of $a$, locally in the Zariski topology. For any $v\in F^n$, we say that $A\in G$ is $v$-regular if and only if the matrix $[v,A(v),\dots,A^{n-1}(v)]$
         whose columns are $v,A(v),\dots,A^{n-1}(v)$ is invertible.

         We denote the collection of $v$-regular matrices by \tdef{$\bfG^{r,v}$}.

         Define $\nu_v: \bfG^{r,v}\cap \bfG^{rss}\to \bfG\times \bfC^{rss}$ by 
                 $$\nu_v(A):=([v,A(v),\dots,A^{n-1}(v)],p(A))$$

                 It is easy to see that $\nu_v$ is a section for $a$. Also, by \Cref{lem:covering} the collection  
                 $$\{\bfG^{r,v}\cap \bfG^{rss}|v\in F^n\}$$ covers $\bfG^{rss}$. So we are done by \Cref{lem:kot.sec}.
        \item $\phi$ has the norm descent property.\\
        Let us first explain the idea of the proof. Given two conjugated elements $x_1,x_2\in G^{rss},$ we have to find an element $g\in G$ that conjugates $x_1$ to $x_2$, with an effective bound on its norm. By the previous step, we can find elements $g_1,g_2$ such that $g_i$ conjugates $x_i$ to its companion matrix. Taking the ratio of $g_i$ we get the required element $g$.

        For the formal proof, consider the following diagram.
        $$
\begin{tikzcd}
\bfG\times \bfC^{rss}\times \bfG\times \bfC^{rss} \arrow[r,"a\times a"]   
&
\bfG^{rss}\times \bfG^{rss}
\\
 \bfG\times \bfG\times \bfC^{rss} \arrow[r,"\tilde a"]\arrow[u,"\gamma"] \arrow[ur, phantom, "\square"]   \arrow[d,"Id_\bfG\times a"]& 
 \bfG^{rss} \times_{\bfC^{rss}} \bfG^{rss} \arrow[u,"i"]
 \\
\bfG \times \bfG^{rss} \arrow[rounded corners, to path={ -- ([xshift=32ex]\tikztostart.east) -- 
    node[right]{$\phi$}
    ([xshift=5ex]\tikztotarget.east) -- (\tikztotarget)}]{uur}&
\end{tikzcd}$$
Here, $\gamma$ is given by $\gamma(g_1,g_2,f)=(g_1,f,g_2,f)$ and $\tilde a$ is defined by the Cartesian square.

By the previous step, the map $a\times a$ has NDP. Thus by \Cref{lem:cart.NDP}, the map $\tilde a$  has NDP. Since $i$ is a closed embedding, by \Cref{lem:comp.emb.NDP} this implies that $i\circ \tilde a$ has NDP. So  $\phi\circ Id_G\times a=i\circ \tilde a$ also has NDP. By the previous step $Id_G\times a$ is surjective on the level of points and has NDP. Therefore, by \Cref{lem:Kot.comp}\eqref{lem:Kot.comp:2} we obtain that $\phi$ has NDP as required.
    \end{enumerate}
\end{proof}

\subsection{Bounds on roots of polynomials}\label{ssec:root.bnd}
In this subsection we prove that the product map on polynomials has NDP and give a bound on distances between roots of a polynomial in terms of its norm in $C^{rss}$.

Recall that for any $k\in \Z_{>0}$ we denote by $^k\bfC$  
the variety of monic polynomials of degree $k$. 

\begin{lemma}\label{lem:prod.fin}
For any $k,l\in \Z_{>0}$
the multiplication map 
    $m_{(k,l)}:{}^k\bfC\times {}^l\bfC\to {}^{k+l}\bfC$ is finite. 
\end{lemma}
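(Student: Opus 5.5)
The plan is to show that $m_{(k,l)}$ is a finite morphism of affine varieties by exhibiting the coordinate ring of ${}^k\bfC\times{}^l\bfC$ as a finitely generated integral module over that of ${}^{k+l}\bfC$. The coordinates are as follows. On ${}^k\bfC\times {}^l\bfC$ we use the coefficients $a_0,\dots,a_{k-1}$ of $f=t^k+\sum a_it^i$ and $b_0,\dots,b_{l-1}$ of $g$, with $a_0,b_0$ invertible. On ${}^{k+l}\bfC$ we use the coefficients $c_0,\dots,c_{k+l-1}$ of $h=fg$, with $c_0$ invertible. The map sends $(f,g)\mapsto fg$, so $c_j=\sum_{i+i'=j}a_ib_{i'}$ with $a_k=b_l=1$.

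The first step is to reduce to the version without the nonvanishing condition at $0$. Let ${}^k\fc$ denote the affine space of all monic polynomials of degree $k$. There is a multiplication map $\tilde m:{}^k\fc\times{}^l\fc\to{}^{k+l}\fc$. Since $(fg)(0)=f(0)g(0)$, the locus ${}^k\bfC\times{}^l\bfC$ is exactly $\tilde m^{-1}({}^{k+l}\bfC)$. A base change of a finite morphism to an open subset is finite, so it suffices to show that $\tilde m$ is finite.

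The second step, finiteness of $\tilde m$, is the main point and follows the classical argument via symmetric functions. Consider ${}^k\fc\times{}^l\fc$ inside the space of roots: the map $\A^k\times\A^l\to{}^k\fc\times{}^l\fc$ sending roots to polynomials is the quotient by $S_k\times S_l$. The composite $\A^{k+l}\to{}^{k+l}\fc$ is the quotient by $S_{k+l}$, so it is finite. Concretely, each root $r$ is integral over $F[c_0,\dots,c_{k+l-1}]$, being a root of $h$. Each $a_i$ is, up to sign, an elementary symmetric polynomial in the first $k$ roots, and is therefore integral over $F[c]$. The same holds for the $b_i$. Thus $F[a,b]$ is generated over $F[c]$ by finitely many integral elements, and so it is a finite $F[c]$-module.

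To avoid passing through roots, one can argue directly instead: in $F[a,b]$, the element $a_i$ is integral over $F[c]$ because the identity $h=fg$ in $F[a,b][t]$, combined with the standard lemma that coefficients of monic factors of a monic polynomial are integral over the ring generated by that polynomial's coefficients, gives integrality. This lemma is the same symmetric function argument carried out in a splitting ring. I expect no real obstacle here. The only care needed is to make the reduction from ${}^k\bfC$ to ${}^k\fc$ precise via the Cartesian square, and to note that the scheme-theoretic fibre product is the expected open subscheme.
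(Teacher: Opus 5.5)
Your proof is correct and is essentially the paper's argument. The paper takes the root map $\mu:({}^1\bfC)^{k+l}\to{}^k\bfC\times{}^l\bfC$, $(f_1,\dots,f_{k+l})\mapsto(f_1\cdots f_k,f_{k+1}\cdots f_{k+l})$, notes that it is dominant and that $m_{(k,l)}\circ\mu$ is finite, and concludes that $m_{(k,l)}$ is finite. Your argument that the $a_i,b_i$ are symmetric functions of roots integral over $F[c]$ simply spells out that last step: the source's coordinate ring embeds into that of the root space, which is integral over $F[c]$. Your preliminary reduction to all monic polynomials is harmless bookkeeping.
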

\begin{proof}
    Consider the map $\mu:({}^1\bfC)^{k+l}\to {}^{k}\bfC\times {}^l\bfC$ defined by 
        \begin{equation}       
    \mu(f_1,\dots,f_{k+l})=(f_1\cdots f_k, f_{k+1}\cdots f_{k+l}). 
        \end{equation}       
It is easy to see that this map is dominant. The composition
    $m_{(k,l)}\circ \mu:{}^1\bfC^{k+l}\to {}^k\bfC\times {}^l\bfC\to {}^{k+l}\bfC$ is finite. Thus \Dima{$m_{(k,l)}$} is finite. 
\end{proof}
By \Cref{lem:fin.ndp} this lemma gives the following corollary.
\begin{cor}\label{cor:prod.NDP}
For any $k\in \Z_{>0}$, the multiplication map 
    $m_{(k,l)}:{}^k\bfC\times {}^l\bfC\to {}^{k+l}\bfC$ satisfies the norm descent property. 
\end{cor}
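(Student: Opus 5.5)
The corollary is an immediate consequence of \Cref{lem:prod.fin} combined with \Cref{lem:fin.ndp}, so the plan is to make that deduction explicit.

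By \Cref{lem:prod.fin}, for any $k,l\in\Z_{>0}$ the multiplication map $m_{(k,l)}:{}^k\bfC\times{}^l\bfC\to{}^{k+l}\bfC$ is a finite morphism of algebraic varieties. \Cref{lem:fin.ndp}, which is derived from \cite[Proposition 18.1]{Kot} as recalled in \Cref{lem:pul.kot}, says that finite maps have the NDP. So nothing beyond invoking these two results is needed.

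For the reader's benefit I would spell out why finiteness gives the NDP. Fix norms $||\cdot||_{{}^k C\times{}^l C}$ on the source and $||\cdot||_{{}^{k+l}C}$ on the target. \Cref{lem:pul.kot}(2) gives
$$m_{(k,l)}^*(||\cdot||_{{}^{k+l}C})\sim||\cdot||_{{}^kC\times{}^lC}.$$
Now take $z$ in the image of $m_{(k,l)}$ and any preimage $y$ of $z$. Then
$$||y||\prec||z||_{{}^{k+l}C}\prec||y||,$$
with constants independent of $y$ and $z$. Taking the infimum over all preimages $y$ gives $(m_{(k,l)})_*(||\cdot||)\sim||\cdot||_{{}^{k+l}C}|_{\mathrm{Im}}$, which is exactly the NDP.

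There is no real obstacle here. The one point to be careful about is that \Cref{lem:prod.fin} genuinely gives finiteness of the morphism and not just finiteness on $F$-points. This holds: the composite with $\mu$ is finite and $\mu$ is dominant, and since ${}^k\bfC\times{}^l\bfC$ is integral, it follows that $\cO({}^k\bfC\times{}^l\bfC)$ is integral over $\cO({}^{k+l}\bfC)$.
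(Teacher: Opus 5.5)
Your proposal is correct and follows the paper's own route: the paper obtains this corollary in one line by combining Lemma \ref{lem:prod.fin} (finiteness of $m_{(k,l)}$) with Corollary \ref{lem:fin.ndp} (finite maps have the NDP, via Lemma \ref{lem:pul.kot}(2)). Your unpacking of the infimum step and your check that dominance of $\mu$ plus integrality of the source yields genuine finiteness are both accurate.
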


\begin{notation} $ $
    \begin{itemize}
        \item For $f\in C$, denote 
    $\mdef[\vert\vert\cdot\vert\vert_C^{root}]{||f||_C^{root}}=\max\{||\lam||_{\Rami{\bar F}} \, \vert \, \lam\in \bar F \text{ with }f(\lam)=0\}.$ 
    \item  For $f\in C^{rss}$, denote 
        \begin{equation}       
    \mdef[\vert\vert\cdot\vert\vert_C^{\delta-root}]{||f||_{C^{rss}}^{\delta-root}}=\max\left\{\left|\left|\frac 1{\lam-\mu}\right|\right|_{\Rami{\bar F}} \, : \, \lam,\mu\in \bar F \text{ with }f(\lam)=f(\mu)=0 \text{ and } \lam\neq \mu\right\}.
        \end{equation}
    \end{itemize}
\end{notation}

\begin{lem}\label{lem:root}
 We have $||\cdot ||^{root}_{C}\prec ||\cdot ||_{C}$.
\end{lem}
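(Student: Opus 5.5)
We need to show that there is a constant $c>1$ such that $\|f\|_C^{root}<c\|f\|_C^{c}$ for all $f\in C$. The plan is to use the classical Cauchy-type bound on roots in its non-Archimedean form. The bound itself is elementary; the small point to watch is that $\|\cdot\|_C$ is defined through the coefficients of $f$ and $f(0)^{-1}$, so it must dominate the coefficient norm.

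Write $f(t)=t^n+a_{n-1}t^{n-1}+\dots+a_0$ with $a_0\neq 0$. Let $\lam\in\bar F$ be a root, and use the unique extension of the absolute value to $\bar F$. If $|\lam|\le 1$ then $\|\lam\|_{\bar F}=1$, and there is nothing to prove. If $|\lam|>1$, the equation $\lam^n=-\sum_{j<n}a_j\lam^j$ and the ultrametric inequality give
$$|\lam|^n\le \max_j |a_j||\lam|^j\le \max_j|a_j|\cdot|\lam|^{n-1}.$$
Hence $|\lam|\le\max_j|a_j|\le \|f\|_{\fc}$, where $\|\cdot\|_{\fc}$ is the max-norm $\|\cdot\|_{F^{n}}$ on the coefficients.

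Combining the two cases, $\|\lam\|_{\bar F}\le \max(1,\max_j|a_j|)\le\|f\|_{\fc}\le\|f\|_C$, the last step by the definition $\|f\|_C=\max(\|f\|_\fc,\|f(0)^{-1}\|)$. Taking the maximum over all roots $\lam$ gives $\|f\|_C^{root}\le\|f\|_C$, which is a stronger statement than $\prec$, with $c=2$ say.

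The argument has no real obstacle. The only thing to check is that the valuation on $\bar F$ is ultrametric, so no degree-dependent constant appears; in an Archimedean setting one would get a factor of $n$ instead, which $\prec$ would still absorb. Note also that the condition $f(0)\neq 0$ is not needed for this direction. It will matter only for the reverse-type bounds, such as lower bounds on $|\lam|$ and the $\delta$-root estimate.
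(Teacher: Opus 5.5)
Your proof is correct and is essentially the paper's argument. The paper simply cites the classical Cauchy--Lagrange root bound to get $||\cdot||^{root}_C\leq ||\cdot||_C$, and you supply the ultrametric proof of that bound; the relation $\prec$ then follows at once because $||\cdot||_C\geq 1$.
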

\begin{proof}
The classical bounds of Cauchy and Lagrange give 
$||\cdot ||^{root}_{C}\leq ||\cdot ||_{C}$. This implies the assertion.
\end{proof}

\begin{cor}\label{cor:delta.root}
We have $||\cdot ||^{\delta-root}_{C^{rss}}\prec ||\cdot ||_{C^{rss}}$.
\end{cor}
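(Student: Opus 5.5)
We need to show that for $f\in C^{rss}$ with distinct roots $\lambda\neq\mu$ we have $\|1/(\lambda-\mu)\|_{\bar F}\le c\|f\|_{C^{rss}}^c$. The plan is to bound $|\lambda-\mu|^{-1}$ using the discriminant together with Lemma~\ref{lem:root}.

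Write $f=\prod_{k=1}^n(t-\lambda_k)$ over $\bar F$. The discriminant is
$$\Delta_C(f)=\pm\prod_{k\neq l}(\lambda_k-\lambda_l).$$
Fix a pair $\lambda=\lambda_a$, $\mu=\lambda_b$ with $a\neq b$. Then
$$\frac{1}{(\lambda_a-\lambda_b)(\lambda_b-\lambda_a)}=\pm\frac{\prod_{(k,l)\neq(a,b),(b,a),\,k\neq l}(\lambda_k-\lambda_l)}{\Delta_C(f)}.$$
We bound each factor in the numerator separately. Since $|\lambda_k-\lambda_l|\le\max(|\lambda_k|,|\lambda_l|)$ by the ultrametric inequality, Lemma~\ref{lem:root} gives $|\lambda_k-\lambda_l|\le\|f\|_C^{root}\le\|f\|_C$. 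The Cauchy--Lagrange bound already holds without constants, and we may use it in that form. There are fewer than $n^2$ such factors. By definition of $\|\cdot\|_{C^{rss}}$ we also have $|\Delta_C(f)|^{-1}\le\|f\|_{C^{rss}}$.

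Combining these gives
$$|\lambda-\mu|^{-2}\le\|f\|_C^{n^2}\,\|f\|_{C^{rss}}\le\|f\|_{C^{rss}}^{n^2+1}.$$
Hence
$$\Big\|\frac1{\lambda-\mu}\Big\|_{\bar F}=\max\big(|\lambda-\mu|^{-1},1\big)\le\|f\|_{C^{rss}}^{(n^2+1)/2}.$$
This uses $\|f\|_{C^{rss}}\ge1$. Taking the maximum over all pairs of roots yields the claim with an explicit constant.

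No step is a genuine obstacle. The only point to watch is normalization: $\|\cdot\|_{C^{rss}}$ is defined with $\|\Delta(f)^{-1}\|$, so we need $|\Delta^{-1}|\le\|\Delta^{-1}\|_F$. This holds by the definition $\|x\|_F=\max(|x|,1)$. We also need the absolute value on $\bar F$ to extend that on $F$, so that the discriminant, which lies in $F$, is measured consistently.
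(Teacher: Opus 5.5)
Your proof is correct and is essentially the paper's argument. Both isolate one pair of roots in the product formula for the discriminant, bound every remaining factor $|\lambda_k-\lambda_l|$ by $||f||^{root}_C\le ||f||_C$ using Lemma~\ref{lem:root}, and bound $|\Delta(f)|^{-1}$ by $||f||_{C^{rss}}$. Writing $\Delta_C(f)=\pm\prod_{k\neq l}(\lambda_k-\lambda_l)$, with the square correctly accounted for, is actually more accurate than the paper's displayed formula, but this only changes the exponent.
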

\begin{proof}
Let $f\in C^{rss}$  and let $\lambda_i$ be the roots of $f$ in $\bar F$.
We have 
   \begin{equation}       
|\Delta(f)|_F=\prod_{1\leq i<j\leq n}|\lambda_i-\lambda_j|_{\bar F}.
    \end{equation}       

Thus, for any $i\neq j$ we have 
    \begin{equation}       
|\lambda_i-\lambda_j|_{\bar F}\geq |\Delta(f)|_F (||f||_{C}^{root})^{-n^2} \geq (||f||_{C^{rss}})^{-1}(||f||_{C}^{root})^{-n^2} 
    \end{equation}       

By the previous lemma (\Cref{lem:root}), this implies the assertion.
\end{proof}

\subsection{Norms on $S^{el}$}
In this subsection we define an abstract norm $||\cdot||'_{S^{el}}$ and prove that it is a norm.
\begin{notation}\label{not:S} Let $k<n$ be an integer. We introduce the following notation.
    \begin{enumerate}
    \item $\mdef{\bfP_k}$ - the variety of polynomials of degree $\leq k$.
    \item $\mdef{\bfS_k}:=\bfC^{rss} \times \bfP_k$, $S_k:=\bfS_k(F).$
\item $\mdef{S^{el}_k}:=(C^{el} \times P_k)$. 
\item  $\mdef{\bfS}:=\bfS_{n-1}$, $\mdef{S}:=S_{n-1}$, $\mdef{S^{el}}:=S^{el}_{n-1}$. 
\item\label{not:S:5}  For any $f\in C^{rss},$ define an abstract norm $\mdef{\vert\vert\cdot \vert\vert_{(F[t]/f)}'}$ in the following way. 
Let $f=\prod f_i$ be the decomposition of $f$ into irreducible monic polynomials. Identify $F[t]/f$ with $\prod E_i,$ where $E_i:=F[t]/f_i$. 
    Using this identification, define 
        \begin{equation*}       
||\cdot ||'_{(F[t]/f)}:=||\cdot||_{E_1}\times \cdots\times ||\cdot||_{E_d}
    \end{equation*}
\item\label{not:S:6}
$\mdef[\vert\vert\cdot\vert\vert^R_{S_k}]{||(f,g)||^R_{S_k}}:=||(g \mo f)||'_{(F[t]/f)}$

\item\label{not:S:7} $\mdef{\vert\vert\cdot \vert\vert'_{S_k}}:=\max(pr^*(||\cdot||_{C^{rss}}),||(f,g)||_{S_k}^R)$, where $pr:S_k \to C^{rss}$ is the projection. 
\item \label{not:S:8}
$\mdef{\vert\vert\cdot\vert\vert^R_{S^{el}}}:=(||\cdot||^R_{S})|_{S^{el}}$;
$\mdef{\vert\vert\cdot\vert\vert'_{S^{el}}}:=(||\cdot||'_{S})|_{S^{el}}$;
$\mdef{\vert\vert\cdot\vert\vert_{S^{el}}}:=(||\cdot||_{S})|_{S^{el}}$
\end{enumerate}
\end{notation}
\begin{lem}\label{lem:elsim} For any $k<n$ we have
    $||\cdot||'_{S_k^{el}}\sim ||\cdot||_{S_k^{el}}$
\end{lem}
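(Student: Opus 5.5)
We must show two inequalities on $S_k^{el}=C^{el}\times P_k$: first $\|\cdot\|_{S_k^{el}}\prec\|\cdot\|'_{S_k^{el}}$, then $\|\cdot\|'_{S_k^{el}}\prec\|\cdot\|_{S_k^{el}}$. We fix a concrete norm on $S_k$,
\[
\|(f,g)\|_{S_k}=\max\bigl(\|f\|_{C^{rss}},\|g\|_{F^{k+1}}\bigr),
\]
where $g$ is identified with its coefficient vector. This is legitimate since $\bfS_k=\bfC^{rss}\times\bfP_k$ is a product and both factors carry norms. For $f\in C^{el}$ we write $E=F[t]/f$, which is a field of degree $n$ over $F$, and let $\lambda_1,\dots,\lambda_n\in\bar F$ be the roots of $f$, i.e.\ the conjugates of the image of $t$ in $E$. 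We use the normalized absolute value $|a|_E=|a|_{\bar F}$, which is extended from $F$. The norm $\|a\|_E=\max(|a|_E,1)$ for $a=g(t)$ equals $\max(|g(\lambda_j)|,1)$ for any $j$, since all conjugates have the same absolute value. Hence $\|(f,g)\|'=\max(\|f\|_{C^{rss}},\max(|g(\lambda_1)|,1))$.

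\emph{Upper bound on the primed norm.} Here we need to control $|g(\lambda_1)|$. This is immediate from
\[
|g(\lambda_1)|\le \|g\|_{F^{k+1}}\cdot\bigl(\|f\|^{root}_C\bigr)^{k},
\]
together with \Cref{lem:root}, which gives $\|f\|^{root}_C\le\|f\|_C$. So $\|\cdot\|'\le\|\cdot\|^{k+1}$, and this direction is trivial.

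\emph{Lower bound.} This is the main obstacle: we must bound the coefficients of $g$ polynomially in $\|f\|_{C^{rss}}$ and $\max_j|g(\lambda_j)|$. I would use Lagrange interpolation. Since $\deg g\le k<n$ and the $\lambda_j$ are distinct, $g$ is determined by its values at $\lambda_1,\dots,\lambda_{k+1}$:
\[
g(t)=\sum_{j=1}^{k+1} g(\lambda_j)\prod_{i\ne j,\,i\le k+1}\frac{t-\lambda_i}{\lambda_j-\lambda_i}.
\]
The coefficients of the basis polynomial are elementary symmetric expressions in the $\lambda_i$, so they are bounded in absolute value by $(\|f\|^{root}_C)^{k}$. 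The denominators are controlled by \Cref{cor:delta.root}:
\[
\bigl|(\lambda_j-\lambda_i)^{-1}\bigr|\le\|f\|^{\delta-root}_{C^{rss}}\prec\|f\|_{C^{rss}}.
\]
Combining these, each coefficient of $g$ is bounded by $\max_j|g(\lambda_j)|$ times a polynomial expression in $\|f\|_{C^{rss}}$. By the ultrametric inequality this holds even though the computation takes place in $\bar F$, because $g$ has $F$-coefficients and $|\cdot|_{\bar F}$ extends $|\cdot|_F$.

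Since $|g(\lambda_j)|=|g(\lambda_1)|$ for all $j$, we obtain $\|g\|_{F^{k+1}}\le c\,(\|(f,g)\|')^{c}$. Together with the trivial comparison of the $f$-components, this gives $\|\cdot\|_{S_k^{el}}\prec\|\cdot\|'_{S_k^{el}}$.

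The paper's sketch mentions an inductive argument, for example on $k$, peeling off one interpolation node at a time. The interpolation argument above should work directly, but if one prefers, an induction on $k$ writing $g=g(\lambda_1)+(t-\lambda_1)h$ is equivalent, and it uses the same two root bounds. The only delicate point is making sure the exponents depend only on $n$, which they do.
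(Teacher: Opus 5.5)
Your proof is correct, and it takes a genuinely different, more direct route than the paper.

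For the easy direction, the paper uses the identity $\|\cdot\|^R_{S_k^{el}}=res^*(\|\cdot\|_F)$ together with the general pullback bound \Cref{lem:pul.kot}. You instead use the ultrametric estimate $|g(\lambda)|\le\|g\|_{F^{k+1}}(\|f\|^{root}_C)^k$.

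For the hard direction, the paper inducts on $k$ and works on the leading coefficient $a_k$:
\begin{enumerate}
\item Under the normalization $|g(t)|_E\le 1$, it bounds $a_k$ by contradiction. A large $|a_k|$ forces a root of $g$ close to a root of $f$. Moving this pair by automorphisms of a normal closure, and using the separation bound of \Cref{cor:delta.root}, yields $n$ distinct roots of $g$, contradicting $\deg g<n$.
\item It removes the normalization by rescaling $g$ by a suitable $b\in F$.
\item It controls $g-a_kt^k$ in $E$ and applies the inductive hypothesis.
\end{enumerate}
You instead recover all coefficients at once by Lagrange interpolation at $k+1\le n$ roots of $f$. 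Galois invariance of $|\cdot|_{\bar F}$ gives $|g(\lambda_j)|=|g(t)|_E$.

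Both proofs rest on the same two inputs, \Cref{lem:root} and \Cref{cor:delta.root}. Yours needs no induction, rescaling or contradiction, and it gives explicit exponents. It is also more general: irreducibility enters only through $|g(\lambda_j)|=|g(t)|_E$. Replacing this by $\max_i|g(t)|_{E_i}$, the same computation gives $\|\cdot\|'_{S_k}\sim\|\cdot\|_{S_k}$ on all of $S_k$. This could bypass part of the reduction to the elliptic case in \Cref{lem:norms.S.cross}.

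Two minor points.
\begin{itemize}
\item The paper's identity $|g\bmod f|_{F[t]/f}=|res(f,g)|_F$ corresponds to the module $|N_{E/F}(\cdot)|_F$, which is the $n$-th power of your extended absolute value. This is harmless up to equivalence.
\item Your closing alternative $g=g(\lambda_1)+(t-\lambda_1)h$ is unnecessary. Since $h\notin F[t]$, it would be an induction over $\bar F$-polynomials, i.e.\ Newton interpolation. The direct interpolation is the version to write down.
\end{itemize}
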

Before giving the formal proof let us indicate its idea: The main part is the inequality $||\cdot||_{S_k^{el}}\prec ||\cdot||'_{S_k^{el}}$.  Its intuitive meaning is that for a pair $(f,g)\in S^{el}$ we can bound (from above) the coefficients of $g$ by the norm of $f\in C^{rss}$ and the norm of $g(t)$ considered as an element in $F[t]/f$. The proof is by induction on the degree of $g$. The main step is to bound the norm of the leading coefficient of $g$. This is done in Steps \ref{pf:elsim:1},\ref{pf:elsim:2} below. The bound on the rest of the coefficients follows by induction (see Steps \ref{pf:elsim:3},\ref{pf:elsim:4} below). 

The proof of the bound on the leading coefficient of $g$ is based on a relation between the norm of this leading coefficient, the norm of $g(t)$ and the distances between the roots of $f$ and of $g$ (see \eqref{pf:elsim:eq1} below). This relation implies that if the leading coefficient is too large, then one of the roots of $f$ is very close to one of the roots of $g$. Using the Galois action, we deduce that any root of $f$ is very close to some root of $g$. Since we can bound the distance  between roots of $f$ (\Cref{cor:delta.root}), this contradicts the fact that $\deg(g)<\deg(f)$.
\begin{proof}[Proof of \Cref{lem:elsim} ]
We first show $||\cdot||'_{S_k^{el}}\prec ||\cdot||_{S_k^{el}}$. 
    Let $(f,g)\in S_k^{el}$. We have 
        \begin{equation}       
|g \,\mathrm{mod}\, f|_{F[t]/f}=|res(f,g)|_F,
    \end{equation}       
where $res(f,g)$ is the resultant of $f$ and $g$.  Consider $res$ as a map $S_k^{el}\to F$. We obtain 
    \begin{equation}\label{eq.R.el}
        ||\cdot ||^R_{S_k^{el}}=res^*(||\cdot||_{F}).
    \end{equation}
    
    Thus by \Cref{lem:pul.kot} we have $||\cdot||^R_{S_k^{el}}\prec ||\cdot||_{S_k^{el}}$ 
    and thus        
    $||\cdot||'_{S_k^{el}}\prec ||\cdot||_{S_k^{el}}$. 

    It remains to show that $||\cdot||_{S_k^{el}}\prec ||\cdot||'_{S_k^{el}}$.
We will prove this by induction on $k$. 
Define 
\begin{itemize}
    \item $T:S^{el}_k\to F$ by     \begin{equation}       
T\left(\left(f,\sum_{i=0}^{k}a_it^i\right)\right)=a_k
    \end{equation}
    \item $H:S^{el}_k\to S^{el}_{k-1}$ by     \begin{equation}       
H\left(\left(f,\sum_{i=0}^{k}a_it^i\right)\right)=\left(f,\sum_{i=0}^{k-1}a_it^i\right)
    \end{equation}
\item $||\cdot||''_{S^{el}_k}:=\max(T^*(||\cdot||_{F}),H^*(||\cdot||'_{S^{el}_{k-1}}))$. 
\end{itemize}

Note that 
    \begin{equation}       
||\cdot||_{S^{el}_k}:=\max(T^*(||\cdot||_{F}),H^*(||\cdot||_{S^{el}_{k-1}})).
    \end{equation}
Thus, the induction hypothesis implies that 
$||\cdot ||_{S^{el}_k}  \prec ||\cdot ||''_{S^{el}_k}$.
Thus it is enough to show that $||\cdot ||''_{S^{el}_k} \prec ||\cdot ||'_{S^{el}_k}$. 

Denote:
    \begin{equation}       
S^{el,1}_k=\left\{(f,g)\in S^{el}_k :  \left| g \mo f\right|_{F[t]/f}\leq 1 \right\} 
    \end{equation}       

We divide the proof into the following steps.

\begin{enumerate}[Step 1.]
    \item\label{pf:elsim:1} $(T^*(||\cdot ||_{F})|_{S^{el,1}_k}\prec ||\cdot ||_{S^{el}_k}'|_{S^{el,1}_k}$\\
    Let 
        \begin{equation}       
    (f=\sum_{i=0}^n b_it^i,g=\sum_{i=0}^k a_it^i)\in S^{el,1}_k
        \end{equation}       
    By \Cref{cor:delta.root}, it is enough to show that 
        \begin{equation}       
|a_k|_F\leq     (||f||_{C^{rss}}^{\delta-root})^n
    \ell^{n}.
        \end{equation}
    Assume the contrary.
    Let $E=F[t]/f$ and 
    let $E'/F$ be the normal closure of $E/F$.
 Choose a finite field extension $L\Rami{'}/E'$ such that $g$ splits in $L\Rami{'}$. \Rami{Let $L/F$ be the normal closure of $L'/F$. By \cite[\href{https://stacks.math.columbia.edu/tag/0BME}{Lemma 0BME}]{SP}} the map $Aut(L/F)\to Gal(E'/F)$ is onto.

  Let $\lambda_i$ be the roots of $f$ and $\mu_j$ be the roots of $g$ in $L$. Consider $g(t)$ as an element in $E$.  
  We have 
  \begin{align}\label{pf:elsim:eq1}
    1&\geq |g(t)|_E=|a_k\prod_{\Rami{j=1}}^{\Rami{k}} (t-\mu_j)|_E
    =|a_k|_F\cdot |\prod_{\Rami{j=1}}^{\Rami{k}} \Rami{\prod_{i=1}^n} (\lambda_i-\mu_j)|_L^{\frac 1n}>\\&>(||f||_{C^{rss}}^{\delta-root})^{n}\ell^{n}\prod_{\Rami{j=1}}^{\Rami{k}} \Rami{\prod_{i=1}^n} |\lambda_i-\mu_j|_L^{\frac 1n}.   
  \end{align}
  So there exists \Rami{$(i,j)\in 
  \{1,\dots, n\}\times \{1,\dots, k\}$} such that     \begin{equation}       
|\mu_\Rami{j}-\lambda_\Rami{i}|_E<
  ||f||_{C^{rss}}^{\delta-root}\ell^{-1}.
      \end{equation}       
\Rami{Fix such $(i,j)$.}

On the other hand 
for any \Rami{$i_1\neq i_2\in \{1,\dots, n\}$} we have
$$|\lambda_{\Rami{i_1}}-\lambda_{\Rami{i_2}}|_E>
  ||f||_{C^{rss}}^{\delta-root}\ell^{-1}.$$  
For any $l\in\{1,\dots,n\}$, let $\gamma_l\in Aut(L/F)$ be such that $\gamma_l\lambda_{\Rami{i}}=\lambda_l$.
Set $\mu_l:=\gamma_l \mu_{\Rami j}$.
We have:
$$|\lambda_l-\mu_l|_L=|\lambda_j-\mu_i|_L<\min_{\Rami{i_1,i_2}}|\lambda_\Rami{i_1}- \lambda_\Rami{i_2}|$$
So $\mu_l$ are all distinct \Rami{when $l$ ranges over $\{1,\dots,n\}$}. On the other hand $\mu_l$ are roots of $g$. So $\deg(g)\geq n$. Contradiction.
    \item\label{pf:elsim:2} $T^*(||\cdot ||_{F})\prec ||\cdot ||_{S_k}'$\\
    By the previous step, we know that there exists $c>1$ such that for every $(f,g)\in S_k^{el,1}$ we have 
    $||a_k||_F<c(||(f,g)||'_{S_k})^c$.

    Fix $(f,g=\sum_{i=0}^ka_it^k)\in S_k^{el}$. 
    It  is enough to show that 
    $$||a_k||_F<c(||(f,g)||'_{A_k'})^{c+n}.$$ Let $M:=\left|\sum a_it^i\right|_{F[t]/f}$.
    We may assume that $M> 1$. Find $b\in F$ such that $M^n\geq |b| \geq M$.  We have
    $$\left|\frac{a_k}{b}\right|_F\leq \left|\left|\frac{a_k}{b}\right|\right|_F<c\left(\left|\left|\left(f,\frac{g}{b}\right)\right|\right|'_{S_k^{el}}\right)^c .$$
    Thus 
    \begin{align*}
    |a_k|&<|b|c\left(\left|\left|\left(f,\frac{g}{b}\right)\right|\right|_{S^{el}_k}'\right)^c\\ 
    &\leq M^nc\left(\left|\left|\left(f,\frac{g}{b}\right)\right|\right|_{S^{el}_k}'\right)^c \leq c\left(\left|\left|\left(f,g\right)\right|\right|_{S^{el}_k}'\right)^{c+n}
    \end{align*}
Thus $$||T((f,g))||=||a_k||_F<c(||(f,g)||'_{S^{el}_k})^{c+n}.$$
    \item \label{pf:elsim:3}
    $H^*\left(||\cdot ||'_{S^{el}_{k-1}}\right)\prec \max\left((||\cdot ||'_{S^{el}_{k}},T^*\left(||\cdot ||_{F}\right)\right)$.\\
    Let $y:=\left(f=\sum_{i=0}^n b_it^i,g=\sum_{i=0}^k a_it^i\right)\in S_k^{el}$. We have 
    \begin{align*}        
    \left|\sum_{i=0}^{k-1}a_it^i\right|_E&=\left|\sum_{i=0}^{k}a_it^i-a_kt^k\right|_E
    \leq \left|\sum_{i=0}^{k}a_it^i\right|_E+\left|a_kt^k\right|_E
    \\&\leq    ||y||_{S_k^{el}}'+||T(y)||_{S_k^{el}}\cdot (|t|_E)^k
    \\&=    ||y||_{S_k^{el}}'+||T(y)||_{S_k^{el}}\cdot |b_0|_F^{k/n} \\&\leq||y||_{S_k^{el}}'+||T(y)||_{S_k^{el}}\cdot ||y||'_{S_k^{el}}
    \\&\leq  2\max(||y||'_{S_k^{el}},||T(y)||_{S_k^{el}})^2
    \end{align*}
    Thus $||H(y)||'_{S_k^{el}} < 2\max(||y||'_{S_k^{el}},||T(y)||_{S_k^{el}})^2$.
    \item\label{pf:elsim:4} $||\cdot ||''_{S_k^{el}} \prec ||\cdot ||'_{S_k^{el}}$. \\
    \begin{align*}
        ||\cdot ||_{S_k^{el}}''&=\max\left(H^*\left(||\cdot||'_{S_{k-1}^{el}}\right),T^*\left(||\cdot ||_F\right)\right)\\
        &\prec\max(\max(||\cdot||_{S_{k}^{el}}',T^*\left(||\cdot ||_F\right)),T^*\left(||\cdot ||_F\right))\\
        &\prec         \max(\max(||\cdot||'_{S_{k}^{el}},||\cdot||'_{S_{k}^{el}}),||\cdot||'_{S_{k}^{el}})=||\cdot||'_{S_{k}^{el}}
    \end{align*}
\end{enumerate}
\end{proof}

\subsection{Norms on $S^{el,\times}$}
Let $\mdef{\bfS^{\times}}\subset \bfS$ denote the collection of pairs of coprime polynomials, and $\mdef{S^{\times}}:=\bfS^{\times}(F)$.
In this subsection we define an abstract norm ${\vert\vert\cdot\vert\vert'_{S^{el,\times}}}$  on $\mdef{S^{el,\times}}:=S^{el}\cap S^{\times}$ 
and prove that it is equivalent to  $||\cdot||_{S^{el,\times}}:=||\cdot||_{S^{\times}}|_{S^{el,\times}}$.

\begin{notation} \label{not:cross}
Define:
$ $
    \begin{enumerate}
\item For any $f\in C^{rss},$ define an abstract norm $||\cdot ||_{(F[t]/f)^{\times}}'$ in the following way. 
Let $f=\prod f_i$ be the decomposition of $f$ into irreducible monic polynomials. Identify $(F[t]/f)^{\times}$ with $\prod E_i^{\times},$ where $E_i:=F[t]/f_i$. 
    Using this identification, define 
\begin{equation*}       
\gendef{||\cdot ||'_{(F[t]/f)^{\times}}}:=||\cdot||_{E_1^{\times}}\times \cdots\times ||\cdot||_{E_k^{\times}}
    \end{equation*}

\item 
    $\mdef[\vert\vert(\cdot,\cdot)\vert\vert^R_{S^{\times}}]{\vert\vert(f,g)\vert\vert^R_{S^{\times}}}:=||(g \mo f)||'_{(F[t]/f)^{\times}}$
\item  $\mdef{\vert\vert\cdot \vert\vert'_{S^{\times}}}:=\max(pr_\times^*(||\cdot||_{C^{rss}}),||(f,g)||_{S^{\times}}^R)$,  where $pr_\times:S^\times\to C^{rss}$  is the projection.
\item $\mdef{\vert\vert\cdot\vert\vert^R_{S^{el,\times}}}:=(||\cdot||^R_{S^{\times}})|_{S^{el,\times}}$; $\mdef{\vert\vert\cdot\vert\vert'_{S^{el,\times}}}:=(\vert\vert\cdot\vert\vert'_{S^{\times}})|_{S^{el,\times}}$;\\ 
$\mdef{\vert\vert\cdot\vert\vert_{S^{el,\times}}}:=(\vert\vert\cdot\vert\vert_{S^{\times}})|_{S^{el,\times}}$;
\end{enumerate}
\end{notation}

\begin{lem}\label{cor:cross}
We have $||\cdot||'_{S^{el,\times}}\sim ||\cdot||_{S^{el,\times}}$    
\end{lem}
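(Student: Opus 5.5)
Here $f$ is irreducible, so $E:=F[t]/f$ is a field and, for $(f,g)\in S^{el,\times}$, the element $\bar g:=g \mo f$ lies in $E^\times$. Thus $\|(f,g)\|^R_{S^{el,\times}}=\max(|\bar g|_E,|\bar g|_E^{-1})$. The plan is to reduce to \Cref{lem:elsim} by adding the single quantity $|\bar g|_E^{-1}$. On the variety side, $\bfS^\times$ is the open subset of $\bfS$ where $res(f,g)\neq 0$, so it is the principal open set on which $res$ is invertible.

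\textbf{Step 1: rewrite the chosen norm on $S^\times$.} By Kottwitz's description of norms on principal open subsets of an affine variety, we may take
\begin{equation*}
\|(f,g)\|_{S^\times}\sim \max\big(\|(f,g)\|_S,\ \|res(f,g)^{-1}\|_F\big).
\end{equation*}
Concretely, $\bfS^\times$ is the closed subvariety $\{(f,g,u): u\cdot res(f,g)=1\}$ of $\bfS\times\A^1$, and a closed embedding is finite, so \Cref{lem:pul.kot} applies. Restricting to $S^{el,\times}$ gives
\begin{equation*}
\|\cdot\|_{S^{el,\times}}\sim\max\big(\|\cdot\|_{S^{el}},\ (res^{-1})^*\|\cdot\|_F\big).
\end{equation*}

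\textbf{Step 2: rewrite the abstract norm.} As in the proof of \Cref{lem:elsim}, we have $|\bar g|_E=|res(f,g)|_F$, up to the normalization convention for $|\cdot|_E$; any fixed power only changes things up to equivalence. Therefore
\begin{equation*}
\|(f,g)\|'_{S^{el,\times}}=\max\big(\|f\|_{C^{rss}},\ \|(f,g)\|^R_{S^{el}},\ \|res(f,g)^{-1}\|_F\big)=\max\big(\|(f,g)\|'_{S^{el}},\ \|res(f,g)^{-1}\|_F\big).
\end{equation*}

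\textbf{Step 3: combine.} \Cref{lem:elsim} with $k=n-1$ gives $\|\cdot\|'_{S^{el}}\sim\|\cdot\|_{S^{el}}$. Taking the maximum with the common extra term $\|res^{-1}\|_F$ preserves equivalence: if $a\le c\,b^c$ then $\max(a,r)\le c\max(b,r)^c$ for $r\ge1$, and symmetrically in the other direction. Comparing Steps 1 and 2 then yields the lemma.

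The only point needing care is Step 1, namely justifying that the norm on $S^\times$ is equivalent to $\max(\|\cdot\|_S,\|res^{-1}\|_F)$. I would derive this directly from the closed embedding into $\bfS\times\A^1$ via \Cref{lem:pul.kot}, together with the product norm on $\bfS\times\A^1$. The remaining steps are formal manipulations of equivalences.
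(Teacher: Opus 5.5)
Your proof is correct and is essentially the paper's argument. Both write $\|\cdot\|'_{S^{el,\times}}$ as the maximum of $\|\cdot\|'_{S^{el}}$ and a resultant term, using $|\bar g|_E=|res(f,g)|_F$, and both write $\|\cdot\|_{S^{el,\times}}$ as the maximum of $\|\cdot\|_{S^{el}}$ and the same term, then conclude from \Cref{lem:elsim}. The paper simply asserts $\|\cdot\|_{S^{\times}}\sim\max(\|\cdot\|_S,\|res^{-1}\|_F)$, whereas you justify it via the closed embedding into $\bfS\times\A^1$.
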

\begin{proof}
As in \eqref{eq.R.el} we have: 
    \begin{equation}\label{eq.R.el.times}
        ||\cdot ||^R_{S^{el,\times}}=res^*(||\cdot||_{F^\times}).
    \end{equation}
Therefore
    $$||\cdot||'_{S^{el,\times}} =\max(||\cdot||'_{S^{el}}, res^*(||\cdot||_{F^\times})).$$
On the other hand, we have:
    $$||\cdot||_{S^{el,\times}} \sim\max(||\cdot||_{S^{el}}, res^*(||\cdot||_{F^\times}))$$

    The lemma follows now from \Cref{lem:elsim}.
\end{proof}

\subsection{Norms on $S^{\times}$}
In this subsection we prove:
\begin{lemma}\label{lem:norms.S.cross}
$||\cdot||_{S^{\times}}\sim||\cdot||'_{S^{\times}}$.
\end{lemma}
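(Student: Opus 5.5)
The plan is to reduce from $S^\times$ to the elliptic case $S^{el,\times}$ (\Cref{cor:cross}) by factoring the first coordinate.

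\textbf{The easy inequality.} For the direction $||\cdot||'_{S^{\times}}\prec||\cdot||_{S^{\times}}$, fix $(f,g)\in S^\times$ with $f=\prod f_i$ the factorization into irreducibles. Then $(f_i, g\bmod f_i)\in {}^{\deg f_i}S^{el,\times}$. The reduction map $g\mapsto g \bmod f_i$ is polynomial in the coefficients of $f_i$ and $g$. Hence
\[
||(f_i,g\bmod f_i)||_{{}^{\deg f_i}S^{\times}}\prec \max(||f_i||_{{}^{\deg f_i}C^{rss}},||(f,g)||_{S^\times}).
\]

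\textbf{The hard inequality.} For $||\cdot||_{S^{\times}}\prec||\cdot||'_{S^{\times}}$ I would reconstruct $g$ from the residues $g_i:=g\bmod f_i$ by the Chinese remainder theorem:
\[
g=\sum_i g_i\, e_i \bmod f,
\]
where the idempotents are $e_i=(f/f_i)\cdot\big((f/f_i)^{-1}\bmod f_i\big)$. Here $(f/f_i)^{-1}\bmod f_i$ can be written with coefficients given by polynomials in the coefficients of $f_i$ and $f/f_i$, divided by $res(f_i,f/f_i)$. This resultant divides the discriminant of $f$ up to the factors $\Delta(f_i)$, so its inverse is bounded by $||f||_{C^{rss}}$.

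To control the data on the right-hand side, apply \Cref{cor:cross} to each $(f_i,g_i)$. This bounds the coefficients of $g_i$, and also $|res(f_i,g_i)^{-1}|$, by a power of $\max(||f_i||_{{}^{\deg f_i}C^{rss}},||g_i||'_{E_i^\times})$.

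\textbf{Bounding the factors.} The main obstacle is bounding $||f_i||_{{}^{\deg f_i}C^{rss}}$ in terms of $||f||_{C^{rss}}$. For $||f_i||_C$ I would use \Cref{cor:prod.NDP}: the multiplication map is finite, so by \Cref{lem:pul.kot} the pullback norm is equivalent to the norm on the product. This bounds all coefficients of the factors, and $|f_i(0)^{-1}|$ as well, since $|f(0)|=\prod|f_i(0)|$ together with the bounds on $|f_j(0)|$ gives it. The discriminant factors as
\[
\Delta(f)=\prod_i\Delta(f_i)\prod_{i<j}res(f_i,f_j)^2,
\]
and each of these factors is bounded above polynomially. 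Therefore $|\Delta(f_i)^{-1}|$ and $|res(f_i,f_j)^{-1}|$ are bounded by $||f||_{C^{rss}}$ to some power.

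\textbf{Conclusion.} Combining these bounds, the coefficients of $g=\sum g_ie_i\bmod f$ are bounded polynomially in $||(f,g)||'_{S^\times}$. The coprimality part is $|res(f,g)^{-1}|=\prod|res(f_i,g_i)^{-1}|$, which is also controlled. Together these give the hard inequality.

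Uniformity is not an issue, because the number of factorization types of $f$ (compositions of $n$) is finite. So one takes the maximum of the constants over these types. Alternatively, one can work on the image of $\prod {}^{k_i}S^{el,\times}\to S^\times$ for each composition $(k_1,\dots,k_r)$ and use the finite, hence NDP, multiplication map.
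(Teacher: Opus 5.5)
Your argument is correct. It uses the same two inputs as the paper's proof: the elliptic case (\Cref{cor:cross}) and the finiteness of polynomial multiplication. The organization is genuinely different, though.

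The paper argues by induction on $n$. For each proper composition $\lambda$, the relative Chinese remainder isomorphism $mod_\lambda$ and the finite map $m^S_\lambda$ make $chin^\times_\lambda$ finite, hence NDP. The inductive equivalence on ${}^\lambda S^{cop,\times}$ is then transported to the image of $chin^\times_\lambda$ through the Cartesian square over $m^{C^{rss}}_\lambda$. The points of $S^\times$ not covered by these images are exactly $S^{el,\times}$, where \Cref{cor:cross} applies.

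You instead factor $f$ completely into irreducibles at once, so no induction is needed. You also replace the abstract NDP of $chin^\times_\lambda$ with explicit estimates:
\begin{itemize}
\item the CRT idempotents have denominators $res(f_i,f/f_i)$, which you control via $\Delta(f)=\pm\prod_i\Delta(f_i)\prod_{i<j}res(f_i,f_j)^2$;
\item the coprimality part is handled by $|res(f,g)|^{-1}=\prod_i|res(f_i,g_i)|^{-1}$;
\item uniformity comes from the finitely many factorization types and integer-coefficient formulas.
\end{itemize}

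Your route shows transparently where the discriminant enters and why the constants are uniform. The paper's formalism hides all these denominator estimates inside the finiteness of $chin_\lambda$ and the norm on ${}^\lambda C^{rss,cop}$. That is shorter, but less explicit.

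One small point to fill in. In your easy direction, the displayed bound on $||(f_i,g_i)||_{{}^{\deg f_i}S^\times}$ includes $|res(f_i,g_i)|^{-1}$, which does not follow from the reduction map being polynomial. You should note that $|res(f_i,g_i)|^{-1}=|res(f,g)|^{-1}\prod_{j\neq i}|res(f_j,g)|$, which is bounded polynomially.
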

Let us first explain the idea of the proof.
The proof is by induction on $n$.

Let $\lambda=(k_1,\dots,k_r)$ be a composition of $n$, namely $n=k_1+\dots+k_r$. Recall our convention of using $k_i$ and $\lambda$ as left super-script (\S\ref{ssec:conv}(\ref{ssec:conv:5}-\ref{ssec:conv:6})).

The proof is  based on the construction of  the following Cartesian squares:
$$
\begin{tikzcd}
  {}^{\lambda}S^{\times} \arrow[d]\ar[dr, phantom, "\square"]  & {}^{\lambda}S^{cop,\times} \ar[dr, phantom, "\square"]\arrow[d] \arrow[l] \arrow[r] & S^{\times} \arrow[d]  \\
  {}^{\lambda}C^{rss} & {}^{\lambda}C^{cop,rss} \arrow[l] \arrow[r]  & C^{rss} 
\end{tikzcd}
$$
In this diagram  the left horizontal arrows are open embeddings, the lower right horizontal arrow is coming from polynomial multiplication and the upper right horizontal arrow is based on polynomial multiplication on the $C$-coordinates and on the Chinese remainder theorem on the $P$-coordinates.

We prove that the right horizontal arrows have the NDP. Based on this we show that,
for a proper composition $\lambda$, the induction hypothesis implies the required  equivalence on the image of the upper right horizontal arrow. When we take the union of these images ranging over all such $\lambda$ we are left with $S^{el,\times}$. So the lemma will follow from \Cref{cor:cross}.

Throughout this subsection $\lambda=(k_1,\dots,k_r)$ denotes a composition of $n$.
\begin{notation} 
We introduce the following notation.    
    \begin{enumerate}
    \item $\mdef{^{\lambda}\bfC^{cop}}\sub {}^{\lambda}\bfC$ - the open subset of tuples consisting of pairwise co-prime polynomials.
    \item$\mdef{^{\lambda}\bfC^{rss,cop}}:={}^{\lambda}\bfC^{rss}\cap {}^{\lambda}\bfC^{cop}$
    \item $^{\lambda}\bfS^{cop} \subset {}^{\lambda}\bfS$ by $\mdef{^{\lambda}\bfS^{cop}} :\cong {}^{\lambda}\bfC^{cop,rss} \times \bfP_{k_1-1} \times \cdots \bfP_{k_r-1}$ under the identification $$^{\lambda}\bfS\cong {}^{\lambda}\bfC^{rss} \times \bfP_{k_1-1} \times \cdots \bfP_{k_r-1}.$$
    \item Similarly we define $\mdef{^{\lambda}\bfS^{\times,cop}}$.
\end{enumerate}
\end{notation}

We now define the notions of division with residue and modular inversions of polynomials as a morphism of algebraic varieties. 

\begin{definition}$\,$
\begin{itemize}
    \item Define $\mdef{mod_{l,k}}:{}^k\bfC\times \bfP_l\to \bfP_{k-1}$ in the following way: for any unital commutative ring $A$, and any $f\in {}^k\bfC(A), g \in \bfP_l(A)$ we set   $mod(f,g)\in \bfP_{k-1}(A)$ to be the unique element such that 
    $$g \equiv mod(f,g) \mod f.$$ 
    \item Define $\mdef{inv_{l,k}}:{}^{(l,k)}\bfC^{cop}\to \bfP_{k-1}$  in the following way: for any unital commutative ring $A$, and any $(f,g)\in {}^{(l,k)}\bfC^{cop}(A)$ we set   $inv_{l,k}(f,g)\in \bfP_{k-1}(A)$ to be the unique element such that $$inv(f,g)f \equiv 1 \mod g.$$ 
\end{itemize}
     
\end{definition}

\begin{notation}
$ $
\begin{enumerate}
        \item Define $mod_{\lambda}:{}^{\lambda}\bfC^{cop,rss} \times \bfP_{n-1}\to {}^{\lambda}\bfS^{cop}$ by     $$\mdef{mod_{\lambda}}((f_1,\dots,f_r),g):=((f_1,mod_{n-1,k_1}(g,f_1) )\dots,(f_r, mod_{n-1,k_r}(g,f_r))).$$
    \item $\mdef{m^{C^{rss}}_{\lambda}}: {}^{\lambda}\bfC^{cop,rss}\to \bfC^{rss}$ to be the map given by the product of polynomials.
    \item $\mdef{m_{\lambda}^{S}}:=m^{C^{rss}}_{\lambda}\times Id_{\bfP_{n-1}}: {}^{\lambda}\bfC^{cop,rss}\times \bfP_{n-1}\to \bfC^{rss}\times \bfP_{n-1}=:\bfS$
\end{enumerate}
\end{notation}
To sum up we have the following diagram:
$$
\begin{tikzcd}
{{}^{\lambda}\bfS^{cop}} &[3em] {{}^{\lambda}\bfC^{cop,rss}\times \bfP_{n-1}}\arrow[d] \arrow[r, "m^{S}_{\lambda}"] \arrow[l,"mod_{\lambda}",swap] \ar[dr, phantom, "\square"] &[3em] \mathbf S\arrow[d]             \\
& {}^{\lambda}\bfC^{cop,rss}  \arrow[r,, "m_{\lambda}^{C^{rss}}",swap]                        &[5em] \bfC^{rss} 
\end{tikzcd}
$$


\begin{lemma}[Relative version of the Chinese remainder theorem]    
$mod_{\lambda}$ is an  isomorphism. 
\end{lemma}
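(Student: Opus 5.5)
We will prove that $mod_{\lambda}:{}^{\lambda}\bfC^{cop,rss}\times \bfP_{n-1}\to {}^{\lambda}\bfS^{cop}$ is an isomorphism of varieties by writing down an explicit inverse morphism over ${}^{\lambda}\bfC^{cop,rss}$. Both sides are varieties over the base ${}^{\lambda}\bfC^{cop,rss}$, and $mod_\lambda$ commutes with the projections to it. It therefore suffices to work functorially: for every unital commutative $F$-algebra $A$ and every tuple $(f_1,\dots,f_r)\in{}^{\lambda}\bfC^{cop,rss}(A)$, we show that the map $A[t]_{\deg\le n-1}\to\prod_j A[t]_{\deg\le k_j-1}$, $g\mapsto (g \mo f_j)_j$, is a bijection. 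The inverse must be given by polynomial formulas in the coefficients of the $f_j$ and of the $g_j$, compatible with base change.

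The key input is that, for monic $f$ of degree $k$, the quotient $A[t]/f$ is a free $A$-module with basis $1,\dots,t^{k-1}$. Hence $mod_{l,k}$ is a well-defined morphism, namely polynomial division by a monic polynomial. Similarly, coprimality of $(f,g)$ on ${}^{(l,k)}\bfC^{cop}$ means that the resultant $res(f,g)$ is invertible. Then multiplication by $f$ on the free module $A[t]/g$ has determinant $\pm res(f,g)$, so it is invertible, and $inv_{l,k}(f,g)$ is given by Cramer's rule, a regular morphism. With this we define the candidate inverse
$$\Psi\big((f_1,g_1),\dots,(f_r,g_r)\big):=\Big((f_1,\dots,f_r),\ mod_{n,n-1}\Big(f_1\cdots f_r,\ \sum_{j=1}^r g_j\,e_j\Big)\Big),$$
where $F_j:=\prod_{i\neq j}f_i$ and $e_j:=F_j\cdot inv(F_j,f_j)$. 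Here $F_j$ and $f_j$ are coprime because the resultant is multiplicative, $res(F_j,f_j)=\prod_{i\ne j}res(f_i,f_j)$, so $inv(F_j,f_j)$ is defined. Every ingredient is a morphism, so $\Psi$ is a morphism.

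It remains to check the two compositions, and this is the usual Chinese remainder computation carried out over $A$. First, $e_j\equiv 1 \mo f_j$ and $e_j\equiv 0\mo f_i$ for $i\ne j$. Hence $mod_\lambda\circ\Psi=Id$, since reducing modulo $f_1\cdots f_r$ does not change residues modulo each $f_i$.

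For $\Psi\circ mod_\lambda=Id$, we need the following uniqueness statement: if $h$ has degree $\le n-1$ and $h\equiv 0 \mo f_j$ for all $j$, then $h=0$. We prove this by showing $\prod f_j\mid h$. Induct on $r$. If $f_1\mid h$ and $f_2\mid h$ with $f_1,f_2$ coprime, write $1=af_1+bf_2$ in $A[t]$, which is possible by the invertibility of multiplication by $f_1$ modulo $f_2$. Then $h=af_1h+bf_2h$ is divisible by $f_1f_2$. Since $\prod f_j$ is monic of degree $n$, division with remainder then forces $h=0$.

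The only subtle point is working over an arbitrary ring $A$ rather than a field, that is, making sure coprimality is scheme-theoretic, meaning invertibility of the resultant. We handle this through the resultant and determinant description above. The remaining steps are routine.
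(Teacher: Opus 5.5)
Your proof is correct and is essentially the paper's argument. The paper also writes down the explicit Chinese-remainder inverse built from $inv$ and $mod$, but first reduces by induction to $\lambda$ of length $2$, whereas you treat all $r$ factors at once via the elements $e_j$ and also verify both compositions and the scheme-theoretic coprimality (invertible resultants), which the paper leaves implicit. The only slip is notational: the final reduction is by a monic polynomial of degree $n$, so it should be $mod_{l,n}$ for some $l\ge 2n-2$, not $mod_{n,n-1}$.
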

\begin{proof}
    The proof follows the proof of the Chinese remainder theorem.
    By induction, it is enough to prove the lemma for the case when $\lambda=(k_1,k_2)$ is of length 2. In this case, we can define the inverse morphism by the following formula. 
    $$ ((f_1,g_1),(f_2,g_2)) \mapsto ((f_1,f_2),mod_{2(k_1+k_2),n}(g_1f_2inv_{k_2,k_1}(f_2,f_1)+g_2f_1inv_{k_1,k_2}(f_1,f_2),g_2)) $$ 
\end{proof}

\begin{lemma}\label{lem:mfin}
    $m^{C^{rss}}_{\lambda}$ is a finite  map, in particular it has the NDP property.
\end{lemma}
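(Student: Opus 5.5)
The plan is to deduce finiteness of $m^{C^{rss}}_{\lambda}$ from the finiteness of the full multiplication map on monic polynomials, via base change and restriction to an open subset. Consider first the multiplication map $m_\lambda:{}^{\lambda}\bfC={}^{k_1}\bfC\times\cdots\times{}^{k_r}\bfC\to {}^{n}\bfC=\bfC$. Applying \Cref{lem:prod.fin} $r-1$ times shows that this map is finite. Concretely, $m_\lambda=m_{(k_1+\dots+k_{r-1},k_r)}\circ(m_{(k_1,\dots,k_{r-1})}\times Id)$; a product of finite maps is finite, and a composition of finite maps is finite.

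Next we identify ${}^{\lambda}\bfC^{cop,rss}$ with the preimage $m_\lambda^{-1}(\bfC^{rss})$. If a product $f_1\cdots f_r$ is separable, then each $f_i$ is separable and the $f_i$ are pairwise coprime, since a common root or a repeated root of some $f_i$ would give a repeated root of the product. Conversely, if the $f_i$ are separable and pairwise coprime, then the product is separable. The one point to check is that this equality holds scheme-theoretically and not only on $\bar F$-points. Both sides are open subschemes of ${}^{\lambda}\bfC$, since each is cut out by the non-vanishing of discriminants and resultants. So it suffices to compare their $\bar F$-points, and the identity $\Delta(f_1\cdots f_r)=\prod_i\Delta(f_i)\prod_{i<j}res(f_i,f_j)^2$ gives that comparison directly.

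It follows that $m^{C^{rss}}_{\lambda}$ is the base change of the finite map $m_\lambda$ along the open embedding $\bfC^{rss}\hookrightarrow\bfC$. Finiteness is stable under base change, so $m^{C^{rss}}_{\lambda}$ is finite, and \Cref{lem:fin.ndp} then gives the NDP.

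The only mildly delicate step is the identification of the preimage with ${}^{\lambda}\bfC^{cop,rss}$, and the discriminant factorization above settles it.
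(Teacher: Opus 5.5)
Your proposal is correct and follows essentially the paper's route. The paper reduces by induction to $\lambda$ of length $2$ and invokes \Cref{lem:prod.fin}; you instead iterate \Cref{lem:prod.fin} over the parts of $\lambda$ and then restrict to the preimage of $\bfC^{rss}$. Your identification ${}^{\lambda}\bfC^{cop,rss}=m_\lambda^{-1}(\bfC^{rss})$ via the discriminant factorization spells out the base-change step that the paper's two-line proof leaves implicit.
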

\begin{proof}
By induction, it is enough to show this for $\lambda$ of length 2. 
This follows from \Cref{lem:prod.fin}.
\end{proof}

\begin{cor}\label{cor:mfin}
    $m^S_{\lambda}$ is a finite  map.
\end{cor}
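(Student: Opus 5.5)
The corollary says that $m^S_{\lambda}=m^{C^{rss}}_{\lambda}\times Id_{\bfP_{n-1}}$ is finite. The plan is to deduce this directly from \Cref{lem:mfin}, using the fact that finiteness is stable under base change. The square in the diagram above, with vertices ${}^{\lambda}\bfC^{cop,rss}\times \bfP_{n-1}$, $\bfS$, ${}^{\lambda}\bfC^{cop,rss}$ and $\bfC^{rss}$, is Cartesian. The vertical arrows are the projections onto the first factor, and $\bfS=\bfC^{rss}\times\bfP_{n-1}$. Indeed,
\[
\bfS\times_{\bfC^{rss}}{}^{\lambda}\bfC^{cop,rss}\cong (\bfC^{rss}\times \bfP_{n-1})\times_{\bfC^{rss}}{}^{\lambda}\bfC^{cop,rss}\cong {}^{\lambda}\bfC^{cop,rss}\times \bfP_{n-1},
\]
and under this identification the projection to $\bfS$ is exactly $m^{C^{rss}}_{\lambda}\times Id_{\bfP_{n-1}}=m^S_\lambda$. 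So $m^S_\lambda$ is the base change of $m^{C^{rss}}_\lambda$ along the projection $\bfS\to\bfC^{rss}$.

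By \Cref{lem:mfin}, $m^{C^{rss}}_{\lambda}$ is finite. Finite morphisms are stable under arbitrary base change: if $\mathcal{O}(\bfU)$ is a finite $\mathcal{O}(\bfV)$-module for an affine open $\bfV\subset\bfC^{rss}$ with preimage $\bfU$, then $\mathcal{O}(\bfU)\otimes_{\mathcal{O}(\bfV)}\mathcal{O}(\bfV\times\bfP_{n-1})$ is a finite $\mathcal{O}(\bfV)[\bfP_{n-1}]$-module. Hence $m^S_\lambda$ is finite.

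We should also make sure the fiber product, taken in the category of schemes as in our conventions, is reduced, so that it agrees with the variety ${}^{\lambda}\bfC^{cop,rss}\times\bfP_{n-1}$. This holds because it is a product of a variety with an affine space over $F$; in fact ${}^{\lambda}\bfC^{cop,rss}$ is an open subset of an affine space, so the product is smooth.

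There is no real obstacle here. The only point to check is that the identification of the fiber product respects the maps, which is immediate from the definition $m^S_\lambda=m^{C^{rss}}_\lambda\times Id$. As with \Cref{lem:mfin}, it then follows from \Cref{lem:fin.ndp} that $m^S_\lambda$ also has the NDP.
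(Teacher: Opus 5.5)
Your proposal is correct and follows essentially the paper's argument. The paper states this as an immediate corollary of \Cref{lem:mfin}, with the Cartesian square drawn just before it exhibiting $m^S_\lambda$ as the base change of $m^{C^{rss}}_\lambda$ along the projection $\bfS\to\bfC^{rss}$; finiteness is preserved under base change, which is exactly your argument.
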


\begin{notation}$\,$
\begin{enumerate}
    \item Define $\mdef{chin_{\lambda}}:=m_{\lambda} \circ mod_{\lambda}^{-1}:{{}^{\lambda}\bfS^{cop}}\to \bfS$
    \item Define $\mdef{chin^{\times}_{\lambda}}:{{}^{\lambda}\bfS^{cop},\times}\to \bfS^{\times}$ to be the restriction of $chin_{\lambda}$.
\end{enumerate}
\end{notation}
    
From \Cref{cor:mfin} we obtain 
\begin{cor}\label{cor:chin.ndp}
    The maps $chin_{\lambda}$ and $chin^{\times}_{\lambda}$ are finite and thus satisfy NDP. 
\end{cor}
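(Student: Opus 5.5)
Since $chin_\lambda=m^S_\lambda\circ mod_\lambda^{-1}$, the plan is to write it as an isomorphism followed by a finite morphism and then use that finiteness is stable under these operations. By the relative Chinese remainder theorem, $mod_\lambda:{}^{\lambda}\bfC^{cop,rss}\times\bfP_{n-1}\to{}^{\lambda}\bfS^{cop}$ is an isomorphism of varieties, so $mod_\lambda^{-1}$ is an isomorphism and in particular finite. By \Cref{cor:mfin}, $m^S_\lambda=m^{C^{rss}}_\lambda\times Id_{\bfP_{n-1}}$ is finite; this is just the base change of the finite map $m^{C^{rss}}_\lambda$ of \Cref{lem:mfin} along $\bfS\to\bfC^{rss}$, i.e.\ the Cartesian square in the displayed diagram. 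A composition of finite morphisms is finite, so $chin_\lambda$ is finite.

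For $chin^\times_\lambda$, I will check that ${}^{\lambda}\bfS^{cop,\times}$ is exactly the preimage $chin_\lambda^{-1}(\bfS^\times)$. Let $(f_i,g_i)_i$ map to $(f,g)$ with $f=\prod f_i$ and $g\equiv g_i \bmod f_i$. Then $\gcd(f,g)=1$ if and only if $\gcd(f_i,g_i)=1$ for every $i$, because the $f_i$ are pairwise coprime. The most robust way to state this is over rings: $g$ is a unit in $A[t]/f\cong\prod_i A[t]/f_i$ if and only if each $g_i$ is a unit in $A[t]/f_i$. Hence the square formed by $chin^\times_\lambda$, $chin_\lambda$ and the two open embeddings is Cartesian. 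So $chin^\times_\lambda$ is a base change of the finite map $chin_\lambda$ along the open embedding $\bfS^\times\hookrightarrow\bfS$, and is therefore finite.

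Finally, \Cref{lem:fin.ndp} (finite maps satisfy the NDP) gives the NDP for both maps. The only step that needs real care is the scheme-theoretic identification ${}^{\lambda}\bfS^{cop,\times}=chin_\lambda^{-1}(\bfS^\times)$. One could instead argue that $chin^\times_\lambda$ is the restriction of a finite map to the closed-in-the-preimage open subset, but that shortcut is only valid once the preimage statement above is known. Everything else is formal.
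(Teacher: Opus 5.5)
Your proof is correct and takes the same route as the paper: $chin_\lambda=m^S_\lambda\circ mod_\lambda^{-1}$ is finite by \Cref{cor:mfin} and the relative Chinese remainder theorem, and finite maps have NDP by \Cref{lem:fin.ndp}. The paper does not justify the finiteness of $chin^\times_\lambda$ explicitly; your check that ${}^{\lambda}\bfS^{cop,\times}=chin_\lambda^{-1}(\bfS^\times)$, which makes $chin^\times_\lambda$ a base change of $chin_\lambda$, supplies exactly that missing step.
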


\begin{notation}$ $
    \begin{enumerate}
        \item  Denote $\mdef{\vert\vert\cdot \vert\vert^R_{{}^{\lambda} S^{\times}}}$, and $\mdef{\vert\vert\cdot \vert\vert^1_{{}^\lambda S^{\times}}}$ to be the natural analogues of $||\cdot ||^R_{S^{\times}}$ and $||\cdot ||^1_{S^{\times}}$. 
        \item Denote 
        $\mdef{\vert\vert\cdot \vert\vert^R_{{}^\lambda S^{cop,\times}}}:=(||\cdot ||^I_{{}^\lambda S^{\times}}|)_{{}^\lambda S^{cop,\times}}$ and 
        \begin{multline*}
        \mdef{\vert\vert\cdot \vert\vert^1_{{}^\lambda S^{cop,\times}}}:=\max(||\cdot ||^R_{{}^\lambda S^{cop,\times}}, pr_{{}^\lambda C^{rss,cop}}^*
        (||\cdot||_{{}^\lambda C^{rss,cop}}))=\\
        \max((||\cdot ||^1_{{}^\lambda S^{}})|_{{}^\lambda S^{cop,\times}}, pr_{{}^\lambda C^{rss,cop}}^*
        (||\cdot||_{{}^\lambda C^{rss,cop}}))
        \end{multline*}
        where 
        $pr_{{}^\lambda C^{rss,cop}}:{}^\lambda S^{cop,\times}\to {}^\lambda C^{rss,cop}$ is the 
        projection.
    \end{enumerate}
\end{notation}

\begin{proof}[Proof of \Cref{lem:norms.S.cross}.]
We prove the Lemma by induction. Thus from now on we assume that it holds for any smaller value of $n$. Let $\lambda$ be a proper composition of $n$.
    \begin{enumerate}[Step 1.]
        \item $||\cdot ||'_{{}^\lambda S^{\times}} \sim ||\cdot ||_{{}^\lambda S^{\times}}$.\\ 
        Follows immediately from the induction hypothesis.
        \item $||\cdot ||'_{{}^\lambda S^{cop,\times}} \sim ||\cdot ||_{{}^\lambda S^{cop,\times}}$.\\ 
        Follows immediately from the previous step.
        \item $(||\cdot ||'_{S^{\times}})|_{chin_\lambda^{\times}({}^\lambda S^{cop,\times})} \sim (||\cdot ||_{S^{\times}})_{chin_\lambda^{\times}({}^\lambda S^{cop,\times})}.$\\

        Consider the following Cartesian square:
        $$
\begin{tikzcd} {}^{\lambda}S^{cop,\times}\arrow[d,"\Rami{pr_{cop}}"] \arrow[r, "chin^{\times}_{\lambda}"]  \ar[dr, phantom, "\square"] & S^{\times}\arrow[d,"\Rami{pr}"]             \\
 {}^{\lambda}C^{cop,rss}  \arrow[r,"m_{\lambda}^{C^{rss}}",swap]                        & C^{rss} 
\end{tikzcd}
$$
Here, $\Rami{pr_{cop}}$ and $\Rami{pr}$ are the projections \Rami{on the first coordinates}.
        By \Cref{cor:chin.ndp}, $chin_\lambda^{\times}$ has NDP and hence   
        $$(||\cdot ||_{S^{\times}})|_{chin_\lambda^{\times}({}^\lambda S^{cop,\times})}\sim  
        (chin_\lambda^{\times})_*(||\cdot ||_{{}^\lambda S^{cop,\times}}).$$
        By definition 
$$(||\cdot ||^R_{S^{\times}})|_{chin_\lambda^{\times}({}^\lambda S^{\times})}\sim  
        (chin_\lambda^{\times})_*(||\cdot ||^R_{{}^\lambda S^{cop,\times}}).$$ 
        By \Cref{lem:mfin}, 
         $$(||\cdot ||_{C^{rss}})|_{m^{C^{rss}}_\lambda({}^{\lambda}C^{rss})}\sim  
        (m^{C^{rss}}_\lambda)_*(||\cdot ||_{{}^\lambda C^{cop,rss}}).$$
        Therefore, 
        $$(\Rami{pr}^{*}(||\cdot ||_{C^{rss}}))|_{chin_\lambda^{\times}({}^\lambda S^{\times})}\sim  
        (chin^{\times}_\lambda)_*(\Rami{pr_{cop}}^*(||\cdot ||_{{}^\lambda C^{cop,rss}})).$$
So we obtain 
\begin{multline*}
    (||\cdot ||_{S^{\times}})|_{chin_\lambda^{\times}({}^\lambda S^{cop,\times})}\sim  
        (chin_\lambda^{\times})_*(||\cdot ||_{{}^\lambda S^{cop,\times}})\sim
        (chin_\lambda^{\times})_*(||\cdot ||'_{{}^\lambda S^{cop,\times}})=\\=
        (chin_\lambda^{\times})_*(\max(||\cdot ||^R_{{}^\lambda S^{cop,\times}}, \Rami{pr_{cop}}^*(
        ||\cdot ||_{{}^\lambda C^{cop,rss}})))=\\
        \max\left ((chin_\lambda^{\times})_*(||\cdot ||^R_{{}^\lambda S^{cop,\times}}), (chin_\lambda^{\times})_*\Rami{pr_{cop}}^*(
        ||\cdot ||_{{}^\lambda C^{cop,rss}})\right )\sim\\
        \max\left((||\cdot ||^R_{S^{\times}})|_{chin_\lambda^{\times}({}^\lambda S^{\times})},  (\Rami{pr}^{*}(||\cdot ||_{C^{rss}}))|_{chin_\lambda^{\times}({}^\lambda S^{\times})}\right)= \\
        \max\left(||\cdot ||^R_{S^{\times}},  \Rami{pr}^{*}(||\cdot ||_{C^{rss}})\right)|_{chin_\lambda^{\times}({}^\lambda S^{\times})}=\left(||\cdot ||'_{S^{\times}}\right)|_{chin_\lambda^{\times}({}^\lambda S^{\times})}\\
\end{multline*}
\item $||\cdot||_{S}\sim||\cdot||'_{S}$\\
This follows from the previous step, \Cref{lem:elsim} and the following equality
$$ S^{\times}=\left( \bigcup_{\lambda}{}^{\lambda}S^{\times}\right) \cup S^{el,\times},$$
where $\lambda$ ranges over all proper compositions of $n$.
    \end{enumerate}
\end{proof}
\subsection{Norms on $Com^{rss}$ and the proof of \Cref{prop:norm}}\label{ssec:pf.prop.norm}
The next lemma follows from standard linear algebra and basic algebro-geometric considerations.
\begin{lem}
    Let $\bfX$ be an algebraic variety and $V$ be a vector space over $F$. Let $\psi,\phi_i:\bfX\to V$ for $i=1,\dots,k$ be morphisms of algebraic varieties. 
    Assume that for any $x\in \bfX$ there exist unique $a_i\in F$ such that $\sum a_i\phi_i(x)=\psi(x)$. Then there exist regular functions $A_i\in \cO_{\bfX}(\bfX)$ such that $\sum A_i\phi_i=\psi$. 
\end{lem}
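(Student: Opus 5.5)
The plan is to turn the pointwise uniqueness into a rank condition, solve locally by Cramer's rule, and glue the local solutions using reducedness of $\bfX$. I read the hypothesis as holding for all points $x\in\bfX(\bar F)$, with $a_i\in\bar F$; this is how the lemma will be applied. Fix a basis of $V$, identify $V$ with $F^N$, and let $\Phi:\bfX\to \mathrm{Mat}_{N\times k}$ be the morphism whose columns are $\phi_1,\dots,\phi_k$. Uniqueness of the $a_i$ at $x$ says exactly that $\phi_1(x),\dots,\phi_k(x)$ are linearly independent, i.e.\ $\Phi(x)$ has rank $k$. Existence says that $\psi(x)$ lies in the column span of $\Phi(x)$.

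\textbf{Step 1 (local solutions).} For each subset $J\subset\{1,\dots,N\}$ of size $k$, let $\Phi_J$ be the corresponding $k\times k$ minor matrix and $\bfU_J:=\{x\mid \det\Phi_J(x)\neq 0\}$. Since $\Phi(x)$ has rank $k$ at every geometric point, the opens $\bfU_J$ cover $\bfX$. On $\bfU_J$ we define regular functions $A_i^J$ by Cramer's rule: $A_i^J$ is $\det$ of $\Phi_J$ with its $i$-th column replaced by $\psi_J$, divided by $\det\Phi_J$.

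\textbf{Step 2 (the full equation holds on $\bfU_J$).} At a geometric point $x\in\bfU_J$, the unique solution $(a_i)$ of $\sum a_i\phi_i(x)=\psi(x)$ in particular solves the $J$-rows of this system. That $k\times k$ subsystem is invertible, so $a_i=A_i^J(x)$. Hence $\sum_i A_i^J\phi_i-\psi$ vanishes at every geometric point of $\bfU_J$. Since $\bfX$ is reduced, a regular function vanishing at all geometric points is zero, so $\sum_i A_i^J\phi_i=\psi$ on $\bfU_J$.

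\textbf{Step 3 (gluing).} On $\bfU_J\cap\bfU_{J'}$, both $A_i^J(x)$ and $A_i^{J'}(x)$ equal the unique $a_i$ at every geometric point. By reducedness again, $A_i^J=A_i^{J'}$ on the overlap. The local functions therefore glue to global $A_i\in\cO_{\bfX}(\bfX)$ with $\sum A_i\phi_i=\psi$.

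The only delicate point is that the hypothesis must hold at geometric points, or at least at enough points to detect the vanishing of regular functions on the reduced scheme $\bfX$. Everything else is routine linear algebra.
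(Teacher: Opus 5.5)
Your argument is correct and supplies the details the paper omits when it says the lemma ``follows from standard linear algebra and basic algebro-geometric considerations'': Cramer's rule on the open cover by non-vanishing $k\times k$ minors, with reducedness used both to verify $\sum_i A_i^J\phi_i=\psi$ on each chart and to glue. Your insistence on reading the hypothesis at geometric (or all scheme-theoretic) points is justified: with only $F$-points the statement fails, e.g.\ for $\mathbb{A}^1$, $V=F$, $k=1$, $\psi=1$ and $\phi_1$ an irreducible quadratic polynomial, whereas the geometric-point hypothesis does hold in the paper's application to $\bfCom^{rss}$.
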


\begin{cor}\label{cor:LinAlg}
    There exists a morphism $\xi:\bfCom^{rss} \to \bfP_n$ such that for any ring $A$ and any $(g_1,g_2)\in \bfCom^{rss}(A)$ we have 
    $\xi((g_1,g_2))(g_1)=g_2$.
\end{cor}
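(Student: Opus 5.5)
We would apply the preceding lemma with $\bfX=\bfCom^{rss}$, $V=\fg\fl_n$, $\psi(x,y)=y$, and $\phi_i(x,y)=x^{i}$ for $i=0,\dots,n-1$. The coefficients $A_i$ it produces would then define $\xi(x,y):=\sum_{i=0}^{n-1}A_i(x,y)t^i$, which lands in $\bfP_{n-1}\subset\bfP_n$. So the work is to verify the hypothesis of the lemma: for every point $(x,y)$ (we check geometric points, i.e. over $\bar F$) there are unique scalars $a_i$ with $\sum a_ix^i=y$.

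\emph{Uniqueness.} Since $x$ is regular semisimple, its minimal polynomial equals its characteristic polynomial and has degree $n$. Hence $1,x,\dots,x^{n-1}$ are linearly independent.

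\emph{Existence.} A regular element has centralizer in $\fg\fl_n$ equal to $\bar F[x]$, and this has dimension $n$. Concretely, $x$ is diagonalizable with distinct eigenvalues, so any $y$ commuting with $x$ is diagonal in the same basis. Lagrange interpolation at the $n$ distinct eigenvalues then gives a polynomial $g$ of degree $\le n-1$ with $g(x)=y$.

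If one prefers to avoid invoking the lemma as a black box, the argument runs as follows. The map $\cO_\bfX^n\to\cO_\bfX\otimes V$, $(a_i)\mapsto\sum a_ix^i$, has rank $n$ at every point, so it is a subbundle. Locally on $\bfX$ some $n\times n$ minor of the coordinate matrix of $(x^0,\dots,x^{n-1})$ is invertible, and Cramer's rule with that minor gives regular $A_i$ there. By uniqueness these local solutions glue.

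Finally, the identity $\xi(g_1,g_2)(g_1)=g_2$ is an equality of two morphisms $\bfCom^{rss}\to\fg\fl_n$. It holds on all $\bar F$-points, and $\bfCom^{rss}$ is reduced (indeed smooth), so the two morphisms coincide. It therefore holds for $A$-points for every ring $A$. The only mildly delicate step is this passage from pointwise solvability to a regular family valid over arbitrary $A$; it is handled exactly by reducedness together with the preceding lemma.
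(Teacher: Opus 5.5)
Your proposal is correct and is essentially the paper's argument: the corollary comes from applying the preceding linear-algebra lemma with $\phi_i(g_1,g_2)=g_1^i$ ($0\le i\le n-1$) and $\psi(g_1,g_2)=g_2$, and unique solvability at geometric points follows from the regularity of $g_1$, exactly as you verify. Your additional remarks are also sound: the Cramer's-rule-and-gluing proof of the lemma, and the passage to $A$-points via reducedness of $\mathbf{Com}^{rss}$. Landing in $\mathbf{P}_{n-1}$ is in fact what is needed later, when $\theta$ is defined with values in $\mathbf{S}^{\times}$.
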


\begin{proof}[Proof of \Cref{prop:norm}]
\Rami{Let $\xi$ be as in \Cref{cor:LinAlg}. Define $\theta:\bf{Com}^{rss}\to \bfS^\times$ by $$\theta(g_1,g_2)=(p(g_1),\xi((g_1,g_2))).$$
By \Cref{cor:LinAlg}
}
   we have the \Rami{following} Cartesian square:
    $$\begin{tikzcd}
 \bfCom^{rss}\arrow[d,"\Rami{pr_G}"] \arrow[r, "\theta"]  \ar[dr, phantom, "\square"] & \bfS^{\times}\arrow[d,"\Rami{pr_C}"]             \\
 \bfG^{rss}  \arrow[r,"p|_{\bfG^{rss}}",swap]                        & \bfC^{rss} 
\end{tikzcd}$$
\Rami{Here $pr_G, pr_C$ are the projections on the first coordinates. }
So by \Cref{lem:norms.S.cross},
\Dima{
\begin{equation}\label{prop:norm:eq1}
\max(\theta^*(||\cdot||_{S^{\times}}),\Rami{pr_G^*}(||\cdot||_{G^{rss}}))\sim
    \max(\theta^*(||\cdot||'_{S^{\times}}),\Rami{pr_G^*}(||\cdot||_{G^{rss}})).
\end{equation}
    Thus:}
\begin{align*}
    ||\cdot ||_{Com^{rss}}&\sim  \max(\theta^*(||\cdot||_{S^{\times}}),\Rami{pr_G^*}(||\cdot||_{G^{rss}}))\overset{\eqref{prop:norm:eq1}}{\sim}
    \max(\theta^*(||\cdot||'_{S^{\times}}),\Rami{pr_G^*}(||\cdot||_{G^{rss}}))\sim
    \\&\sim
    \max(\theta^*(\max(\Rami{pr_C^*}(||\cdot||_{C^{rss}}),||\cdot||^R_{S^{\times}} )),\Rami{pr_G^*}(||\cdot||_{G^{rss}}))=
    \\&=    \max(\theta^*\left(\Rami{pr_C^*}(||\cdot||_{C^{rss}})\right), \theta^*\left(||\cdot||^R_{S^{\times}}\right),\Rami{pr_G}^*(||\cdot||_{G^{rss}}))=
    \\&=    \max(\Rami{pr_G^*}\left(p|_{\bfG^{rss}}^*(||\cdot||_{C^{rss}})\right), ||\cdot||^R_{Com^{rss}},\Rami{pr_G^*}(||\cdot||_{G^{rss}}))=
    \\&=\Rami{    \max(\Rami{pr_G^*}\left(\max(p|_{\bfG^{rss}}^*(||\cdot||_{C^{rss}}), ||\cdot||_{G^{rss}}) \right), ||\cdot||^R_{Com^{rss}})}\overset{\text{\Rami{Lem \ref{lem:pul.kot} \eqref{lem:pul.kot:1}}}}{\sim}
    \\&\sim
    \max(\Rami{pr_G^*}(||\cdot||_{G^{rss}}),||\cdot||^R_{Com^{rss}})=||\cdot||'_{Com^{rss}}
\end{align*}
\end{proof}
\subsection{Proof of \Cref{lem:Lev.NDP}}\label{ssec:pf.Lev.NDP}

Let $\lambda$ be a composition of $n$ such that $\bfM={}^\lambda\bfG$.

Consider the following diagram:
$$
\begin{tikzcd}
& {}^\lambda\bfG\arrow[r,"{}^\lambda p"] & {}^\lambda\bfC&
\\
& ({}^\lambda\bfG \cap \bfG^{rss})\arrow[r, "{}^\lambda p^{rss,cop}"]\arrow[u] \arrow[ur, phantom, "\square"]& {}^\lambda\bfC^{rss,cop}\arrow[u]&
\\
\bfG\times ({}^\lambda\bfG \cap \bfG^{rss}) \arrow[r,"\phi_\lambda"] \arrow[d,"i_1"] \arrow[dr, phantom, "\square"] & \bfG^{rss}  \times_{\bfC^{rss}} ({}^\lambda\bfG \cap \bfG^{rss})\arrow[d,"i_2"]\arrow[r," p'"]\arrow[u]\arrow[ur, phantom, "\square"]& \bfG^{rss}\times_{\bfC^{rss}} {}^\lambda\bfC^{rss,cop}\arrow[r,"\Rami{pr_1}"]\arrow[d,"\Rami{pr_2}"]\arrow[dr, phantom, "\square"]\arrow[u]&\bfG^{rss}  \arrow[d,"p^{rss}"]\\
\bfG\times \bfG^{rss} \arrow[r,"\phi"] & \bfG^{rss}\times \bfG& {}^\lambda\bfC^{rss,cop}\arrow[r,"m_\lambda^{C^{rss}}"]&\bfC^{rss}
\end{tikzcd}
$$
were:
\begin{itemize}
    \item $i_1,i_2$ are the natural embeddings.
    \item \Rami{$\phi$ is defined in \Cref{lem:phi}}
    \item the rest of the maps are either defined above \Rami{(see the index)} or defined by the Cartesian squares.
\end{itemize}
By \Cref{lem:mfin} the map 
$m_\lambda^{C^{rss}}$ has NDP. Therefore, by \Cref{lem:cart.NDP},  the map $\Rami{pr_1}$ has NDP. The companion matrix gives a section to ${}^\lambda p$. Therefore $p'$ also has a section. Thus $p'$ is onto on the level of $F$-points and, by \Cref{lem:kot.sec},  has NDP. Therefore, by \Cref{lem:Kot.comp}, the composition $ \Rami{pr_1} \circ p'$ has NDP. By \Cref{lem:phi} $\phi$ has NDP. Thus by \Cref{lem:cart.NDP}, the map $\phi_\lambda$ has NDP. Since any 2 regular semi-simple matrices with the same characteristic polynomial are conjugated, the map $\phi_\lambda$ is onto on the level of $F$-points. Therefore by \Cref{lem:Kot.comp}, the composition $ \Rami{pr_1} \circ p'\circ \phi_\lambda$ has NDP as required.


\section{Bound on averaging of a function - Proof of \Cref{thm:bnd.A}}\label{subsec:Pf.bnd.A}
The idea of the proof of \Cref{thm:bnd.A} is based on the analysis of the action map $\Rami{\phi_x:G^{ad}\to G}$. 

The main step is to bound the measure $(\Rami{\phi_x})_*(\mu_{G^{ad}}1_{(G^{ad})_i})|_\Rami{Ad(G) \cdot x}$ in terms of $\mu_{Ad(G) \cdot x}$. This is done in \Cref{cor:push} below.
\Rami{Let us start with some preparations}:
\Rami{
\begin{notation}
For $x\in G^{rss}$ let:
\begin{enumerate}[(i)]
    \item $G^{ad}_x:=G_x/Z(G)$.
    \item $\mu_{G^{ad}_x}$ be the Haar measure on $G^{ad}_x$ corresponding to the Haar measures $\mu_{G_x}$ and $\mu_{Z(G)}$.
    \item For $g\in G^{ad}$ let $\mu_{gG^{ad}_x}$ be the measure on the coset $gG^{ad}_x$ corresponding to the measure $\mu_{G^{ad}_x}$.
\end{enumerate}    
\end{notation}
}

\begin{lemma}\label{lem:meascomp}
     There exists a polynomial $\mdef{\alp_{vol}}\in\N[t]$  such that for any $x\in G^{rss}$ and $i\in \N$ we have
    $$\mu_{G^{\Rami{ad}}_x}(G^{\Rami{ad}}_x\cap (G^{ad})_i)<\alp_{vol}(i+ov_{G^{rss}}(x)).$$
\end{lemma}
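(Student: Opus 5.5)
Since $G^{ad}_x\cap (G^{ad})_i$ is the image of $G_x\cap G_i$ modulo $Z(G)$, the plan is to describe $G_x\cong\prod_j E_j^\times$ explicitly and bound the volume of the image via \Cref{prop:norm}.

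First I will compare the norm $ov_G$ on $G_x$ with $||\cdot||'_{G_x}$. By \Cref{prop:norm}, $||\cdot||'_{Com^{rss}}$ is a norm on $Com^{rss}$. The map $Com^{rss}\to G^{rss}\times G$, $(x,y)\mapsto (x,y)$, is a closed embedding, so by \Cref{lem:pul.kot} the restriction of $\max(||x||_{G^{rss}},||y||_G)$ is a norm on $Com^{rss}$. Hence there is $c>1$ with
$$||y||'_{G_x}\le c\max(||x||_{G^{rss}},||y||_G)^c$$
for all $(x,y)\in Com^{rss}$. Taking $\log_\ell$, every $y\in G_x\cap G_i$ satisfies $ov'_{G_x}(y)\le c(i+ov_{G^{rss}}(x))+c'$.

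I do not expect to need the reverse inequality. The quotient $G^{ad}_x\cap (G^{ad})_i$ is contained in the image of $\{y\in G_x: ov'_{G_x}(y)\le N\}$, where $N:=c(i+ov_{G^{rss}}(x))+c'$ is linear in $i+ov_{G^{rss}}(x)$.

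Next I will compute volumes. Write $G_x=\prod_{j=1}^k E_j^\times$, where $\mu_{G_x}$ gives the maximal compact subgroup $\prod O_{E_j}^\times$ measure $1$. The condition $ov'_{G_x}(y)\le N$ means $|\valua_{E_j}(y_j)|\le N e_j$ for each $j$, where $e_j\le n$ is the ramification index (valuation normalized so that $\log_\ell|\cdot|_{E_j}$ takes values in $\frac1{e_j}\Z$, up to the normalization of $|\cdot|_{E_j}$). Thus the set is a union of at most $\prod_j(2nN+1)$ cosets of the maximal compact subgroup, and its volume is at most $(2nN+1)^n$. Modulo the center, the measure $\mu_{G^{ad}_x}$ corresponds to $\mu_{G_x}$ and $\mu_{Z(G)}$. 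The image in $G_x/Z(G)$ of this union of cosets has $\mu_{G^{ad}_x}$-measure at most the number of $O^\times$-cosets modulo $Z(G)$. This is again at most $(2nN+1)^n$, since $\mu_{Z(G)}(O_F^\times)=1$ and the fibers contain full $O_F^\times$-cosets. Taking $\alp_{vol}(t)=(2n(ct+c')+1)^n+1$, with integer coefficients after enlarging, gives the lemma.

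The main obstacle is making sure the normalizations are consistent. The absolute values on $E_j$ must be defined so that the pieces of $ov'_{G_x}$ count valuation cosets correctly, and the quotient measure on $G^{ad}_x$ must be compared with counting $O^\times_{E}$-cosets; the maximal compact subgroup of $G^{ad}_x$ may be larger than the image of $\prod O_{E_j}^\times$, but only by a bounded finite index, which can be absorbed. Any such discrepancy changes the bound only by a constant factor depending on $n$, so the polynomial bound survives.
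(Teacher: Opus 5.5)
Your proposal is correct and follows the paper's proof essentially step by step. You compare the restriction of $\max(||x||_{G^{rss}},||y||_G)$ with $||\cdot||'_{Com^{rss}}$ via \Cref{prop:norm}, which traps $G_x\cap G_i$ in a ball $(G_x)'_N$ with $N$ linear in $i+ov_{G^{rss}}(x)$; you then count $\prod_j O_{E_j}^\times$-cosets and pass to the quotient by $Z(G)$ using that this ball is $O_F^\times$-stable. Your closing worry about the maximal compact subgroup of $G^{ad}_x$ is moot, since $\mu_{G^{ad}_x}$ is defined as the quotient of $\mu_{G_x}$ by $\mu_{Z(G)}$ rather than by its own normalization.
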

\begin{proof}
The proof is based on the comparison of different norms on $Com^{rss}$, which is given by \Cref{prop:norm}.
\RamiA{We will use \Cref{not:norms.com} and the conventions given in \Cref{not:ov.and.ball}.}

Since $Com^{rss}$ is a closed subvariety of $G^{rss}\times G$, there exists $c\Rami{>1}$ such that for any $(x,y)\in Com^{rss},$ we have 
    $$c^{-1}\max(||x||_{G^{rss}},||y||_G)^{-c} < ||(x,y)||_{Com^{rss}}<c\max(||x||_{G^{rss}},||y||_G)^c.$$
    By \Cref{prop:norm} there exists $d\Rami{>1}$ such that
    $$d^{-1}||\cdot||_{Com^{rss}}^{-d}<||\cdot||'_{Com^{rss}}<d||\cdot||_{Com^{rss}}^d$$
Set $\alp_{vol}(i):=(2(n(d(ci+c)+d)+1))^n$. Let $x\in G^{rss}$ and $i\in \N$. We have
$$(G^\Rami{ad})_\Rami{i}\cap \Rami{G^{ad}_x}=((G_\Rami{i}\cap G_x)Z(G))/Z(G).$$
Define an embedding $\iota_x:G\hookrightarrow G\times G$ by $\iota_x(g):=(x,g)$. Then we have 
\begin{multline*}
    G_\Rami{i}\cap G_x\subset\iota_{\RamiA{x}}^{-1}\RamiA{(}(Com^{rss})_{c(ov(x)+i)+c}\RamiA{)}\sub \iota_{\RamiA{x}}^{-1}\RamiA{(}(Com^{rss})'_{d(c(ov(x)+i)+c)+d}\RamiA{)}\sub \\\iota_{\RamiA{x}}^{-1}\RamiA{(}(Com^{rss})^R_{d(c(ov(x)+i)+c)+d}\RamiA{)}=(G_x)'_{d(c(ov(x)+i)+c)+d}
\end{multline*}
Thus we obtain 
\begin{multline*}
\mu_{G^{\Rami{ad}}_x}(G^{\Rami{ad}}_x\cap (G^{\Rami{ad}})_i)\leq\mu_{G^{\Rami{ad}}_x}((G_x)'_{d(c(ov(x)+i)+c)+d}Z(G)/Z(G))\\
\leq \mu_{G_x}((G_x)'_{d(c(ov_{G^{rss}}(x)+i)+c)+d})\leq (2(n(d(c(ov_{G^{rss}}(x)+i)+c)+d)+1))^n<\alp_{vol}(ov_{G^{rss}}(x)+i)
\end{multline*}
\end{proof}
\begin{cor}\label{cor:al.vol}
With $\alpha_{vol}$ from \Cref{lem:meascomp}, the following holds:
for any
    \begin{itemize}
    \item 
$x\in G^{rss}$ 
\item $[g]\in G^{ad}$
\item $i\in \bN$ 
\end{itemize}
we have 
$$\mu_{\Rami{[g]G^{ad}_x}}\left(\Rami{\phi_x}^{-1}(\Rami{\phi_x}([g])) \cap \Rami{(G^{ad})_i}\right )\leq \alpha_{vol}(i+ov_{G^{rss}}(x)+ov_{\Rami{G^{ad}}}([g]))$$
\end{cor}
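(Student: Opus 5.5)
The plan is to reduce the corollary to \Cref{lem:meascomp} by a left translation, paying for it with the norm of $[g]$. The fiber $\phi_x^{-1}(\phi_x([g]))$ is exactly the coset $[g]G^{ad}_x$. Indeed, $hxh^{-1}=gxg^{-1}$ holds iff $g^{-1}h\in G_x$, and modulo the center this says $[h]\in [g]G^{ad}_x$. The measure $\mu_{[g]G^{ad}_x}$ is by definition the transport of $\mu_{G^{ad}_x}$ under $[h]\mapsto [g][h]$. Hence
$$\mu_{[g]G^{ad}_x}\left(\phi_x^{-1}(\phi_x([g]))\cap (G^{ad})_i\right)=\mu_{G^{ad}_x}\left(G^{ad}_x\cap [g]^{-1}(G^{ad})_i\right).$$

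Next I will show that $[g]^{-1}(G^{ad})_i\subset (G^{ad})_{i+j}$ with $j$ controlled by $ov_{G^{ad}}([g])$, ideally $j$ linear in $ov_{G^{ad}}([g])$ with no extra constants. Since $||\cdot||_{G^{ad}}$ is the pushforward of $||\cdot||_G$, choose a representative $g\in G$ with $ov_G(g)=ov_{G^{ad}}([g])$ (the infimum is attained, since $ov_G$ takes values in a discrete set). For $h\in G_i$ I need to bound $ov_G(g^{-1}h)$.

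The entries of $g^{-1}$ are $\mathrm{adj}(g)/\det g$. Their valuations are at least $-(n-1)ov_G(g)-\valua(\det g)\geq -n\,ov_G(g)$. The entries of $g^{-1}h$ are sums of products, so by the ultrametric inequality $-\valua((g^{-1}h)_{kl})\leq n\,ov_G(g)+i$. Also $\valua\det(g^{-1}h)=\valua\det h-\valua\det g$. The first term is at most $i$. For the second, $-\valua\det g\leq n\,ov_G(g)$ because $\det g$ is a degree-$n$ polynomial in the entries. Hence $ov_G(g^{-1}h)\leq i+n\,ov_G(g)$, and passing to $G^{ad}$ gives $[g]^{-1}(G^{ad})_i\subset (G^{ad})_{i+n\,ov_{G^{ad}}([g])}$.

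Applying \Cref{lem:meascomp} with $i$ replaced by $i+n\,ov_{G^{ad}}([g])$ yields the bound $\alp_{vol}(i+n\,ov_{G^{ad}}([g])+ov_{G^{rss}}(x))$. This differs from the statement by the factor $n$ on $ov_{G^{ad}}([g])$.

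The main obstacle is this factor $n$, since the corollary insists on the same polynomial $\alp_{vol}$ with argument $i+ov_{G^{rss}}(x)+ov_{G^{ad}}([g])$. I would handle it in one of two ways. The first option is to go back into the proof of \Cref{lem:meascomp}, which establishes $G_i\cap G_x\subset (G_x)'_{d(c(ov(x)+i)+c)+d}$. Applying that containment directly to $g^{-1}G_i\cap G_x$ should let the linear dependence on $ov_G(g)$ be absorbed into the constants $c,d$, which are fixed when $\alp_{vol}$ is chosen. The second option, if that does not close cleanly, is to note that the paper allows re-fixing the named polynomial, since ``nothing significant depends on the choices''. Enlarging $\alp_{vol}$ to $\alp_{vol}(nt)$ keeps \Cref{lem:meascomp} true, because $\alp_{vol}$ is increasing with nonnegative coefficients, and then makes the corollary hold verbatim. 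I expect the paper intends this second option.
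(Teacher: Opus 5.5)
Your argument is correct and follows the same route as the paper: identify the fiber with the coset $[g]G^{ad}_x$, translate back to $G^{ad}_x$ using a representative $g$ with $ov_G(g)=ov_{G^{ad}}([g])$, enlarge $(G^{ad})_i$ to a larger ball, and apply Lemma~\ref{lem:meascomp}. The paper simply asserts $(G^{ad})_i\cdot[g]^{-1}\subset (G^{ad})_{i+ov_G(g)}$, translating on the right rather than the left as you correctly do, and this is not literally true because $ov_G(g^{-1})$ can exceed $ov_G(g)$ by a multiplicative factor; so your bound $[g]^{-1}(G^{ad})_i\subset (G^{ad})_{i+n\,ov_{G^{ad}}([g])}$, followed by re-fixing $\alpha_{vol}(t)$ as $\alpha_{vol}(nt)$ (which the paper's convention on named polynomials permits), is the right way to make that step rigorous.
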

\begin{proof}
Fix $x,[g],i$ as above. WLOG we may assume that $||g||_G=||[g]||_{\Rami{G^{ad}}}$.
\Rami{
\begin{align*}
\left(\Rami{\phi_x}^{-1}(\Rami{\phi_x}([g])) \cap ({G^{ad}})_i \right)\cdot [g]^{-1} &= G^{ad}_x \cap (({G^{ad}})_i \cdot [g]^{-1}) \\&
\subset 
G^{ad}_x \cap ({G^{ad}})_{i+ ov_G(g) } 
\\&=
G^{ad}_x \cap ({G^{ad}})_{i+ ov_{{G^{ad}}}([g])} 
\end{align*}
}
So,  by \Cref{lem:meascomp}, 
\Rami{

\begin{align*}
\mu_{[g]G^{ad}_x}\left(\Rami{\phi_x}^{-1}(\Rami{\phi_x}([g])) \cap (G^{ad})_i\right)&=
\mu_{{G^{ad}_x}}\left(\left(\Rami{\phi_x}^{-1}(\Rami{\phi_x}([g])) \cap (G^{ad})_i\right) [g]^{-1}\right)\leq
\\ \leq &
\mu_{{G^{ad}_x}}\left(G^{ad}_x \cap ({G^{ad}})_{i+ ov_{{G^{ad}}}([g])}\right) 
\overset{\text{Lem \ref{lem:meascomp}}}{\leq}
\\ \leq &
\alpha_{vol}\bigl(i+ ov_{G^{ad}}([g])+ov_{G^{rss}}(x)\bigr)
\end{align*}
}
\end{proof}

\begin{cor}\label{cor:push}
There is a polynomial $\mdef{\alpha_{push}}\in\N[t]$ such that \RamiA{for all $i\in \N$ we have}
$$\sup\left(\frac{(\Rami{\phi_x})_*(\mu_{G^\Rami{ad}}1_{(G^\Rami{ad})_i})\Rami{|_{G\cdot x}}}{\mu_{G\cdot x}}\right ) <\alpha_{push}(i+ov_{G^{rss}}(x)).$$
Here, $1_{(G^\Rami{ad})_i}$ is the characteristic function of the set $(G^\Rami{ad})_i.$
\end{cor}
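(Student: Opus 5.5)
The plan is to compute the Radon–Nikodym derivative of the pushforward by disintegrating $\mu_{G^{ad}}$ along the fibres of $\phi_x$, and then to bound the mass of each fibre by \Cref{cor:al.vol}. First, recall that under $G^{ad}\to G^{ad}/G^{ad}_x\cong G\cdot x$ the measure $\mu_{G\cdot x}$ corresponds to the quotient of $\mu_{G^{ad}}$ by $\mu_{G^{ad}_x}$; this holds because $\mu_{G^{ad}}$ and $\mu_{G^{ad}_x}$ are both defined from $\mu_G,\mu_{G_x}$ via the same $\mu_{Z(G)}$. Hence for a nonnegative function $h$ on $G^{ad}$,
$$\int_{G^{ad}} h\,\mu_{G^{ad}}=\int_{G\cdot x}\Bigl(\int_{\phi_x^{-1}(y)} h\,\mu_{[g]G^{ad}_x}\Bigr)\mu_{G\cdot x}(y),$$
where $[g]$ is any point of the fibre $\phi_x^{-1}(y)=[g]G^{ad}_x$ (the coset measure does not depend on the choice of $[g]$, by unimodularity of the torus). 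Applying this to $h=1_{(G^{ad})_i}\cdot(u\circ\phi_x)$ shows that the density of $(\phi_x)_*(\mu_{G^{ad}}1_{(G^{ad})_i})$ with respect to $\mu_{G\cdot x}$ at $y$ equals $\mu_{[g]G^{ad}_x}(\phi_x^{-1}(y)\cap (G^{ad})_i)$.

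Second, the density vanishes at points $y$ whose fibre misses $(G^{ad})_i$. If the fibre meets $(G^{ad})_i$, choose $[g]$ in that intersection, so that $ov_{G^{ad}}([g])\le i$. \Cref{cor:al.vol} then bounds the density by $\alpha_{vol}(i+ov_{G^{rss}}(x)+i)\le\alpha_{vol}(2(i+ov_{G^{rss}}(x)))$. This uses that $\alpha_{vol}$ has nonnegative coefficients, hence is monotone, and that $ov_{G^{rss}}\ge0$. We therefore set $\alpha_{push}(t):=\alpha_{vol}(2t)+1$, adding $1$ to make the inequality strict.

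The main obstacle is the first step: justifying the fibration formula with exactly the normalizations chosen in the paper. This comes down to checking that the measure on $G^{ad}/G^{ad}_x$ obtained from $\mu_{G^{ad}}$ and $\mu_{G^{ad}_x}$ coincides with the one obtained from $\mu_G$ and $\mu_{G_x}$ on $G/G_x$. Both $G_x$ and $G^{ad}$ are quotiented by the same $Z(G)$ with the same $\mu_{Z(G)}$, so this is transitivity of quotient measures (Weil's formula) applied to $Z(G)<G_x<G$. The rest is bookkeeping.
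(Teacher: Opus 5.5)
Your proof is correct and follows essentially the same route as the paper: you identify the density at $y=\phi_x([g])$ with the fibre measure $\mu_{[g]G^{ad}_x}(\phi_x^{-1}(y)\cap (G^{ad})_i)$, pick $[g]$ in $(G^{ad})_i$ so that $ov_{G^{ad}}([g])\le i$, and apply \Cref{cor:al.vol}. The only differences are that you spell out the Weil-formula justification of the density identity, which the paper uses implicitly, and that you take $\alpha_{push}(t)=\alpha_{vol}(2t)+1$ instead of the paper's $\alpha_{vol}(2t+1)$.
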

\begin{proof}
    Take $\alpha_{push}(i)=\alpha_{vol}(2i+1)$.
    Fix $i\in \N$ and $x\in G^{rss}$.
    Let $y\in \supp\left(\frac{(\Rami{\phi_x})_*(\mu_{\Rami{G^{ad}}}1_{(\Rami{G^{ad}})_i})\Rami{|_{G\cdot x}}}{\mu_{G\cdot x}}\right)$. We can find $[g]\in (\Rami{G^{ad}})_i$ such that $\Rami{\phi_x}([g])=y$. Now, we have 
    \begin{align*}        
    \frac{(\Rami{\phi_x})_*(\mu_{\Rami{G^{ad}}}1_{(\Rami{G^{ad}})_i})\Rami{|_{G\cdot x}}}{\mu_{G\cdot x}}(y)&=
    \frac{(\Rami{\phi_x})_*(\mu_{\Rami{G^{ad}}}1_{(\Rami{G^{ad}})_i})\Rami{|_{G\cdot x}}}{\mu_{G\cdot x}}(\Rami{\phi_x}([g]))=
    \\&=
    \mu_{\Rami{[g]G^{ad}_x}}((\Rami{\phi_x}^{-1}(\Rami{\phi_x}([g])) \cap (\Rami{G^{ad}})_i))\overset{\Rami{\text {Cor \ref{cor:al.vol}}}}{\leq}
    \\&\leq
\alpha_{vol}(i+ov_{G^{rss}}(x)+ov_{\Rami{G^{ad}}}([g]))\leq 
    \\&\leq
\alpha_{vol}(2i+ov_{G^{rss}}(x))<\alpha_{push}(i+ov_{G^{rss}}(x))
  \end{align*}        

\end{proof}

\begin{proof}[Proof of \Cref{thm:bnd.A}]
Take $\alpha_{av}:=\alpha_{push}$.
Using the last corollary (\Cref{cor:push}) we get
\begin{align*}
|\cA_i(m)(x)|&= |\langle \Rami{\phi_x}^*(m), \mu_{\Rami{G^{ad}}} 1_{(\Rami{G^{ad}})_i} \rangle|=
\\&=
|\langle m, (\Rami{\phi_x})_*(\mu_{\Rami{G^{ad}}} 1_{(\Rami{G^{ad}})_i}) \rangle|\leq
\\&\leq
\langle (|m|)\Rami{|_{G\cdot x}}, (\Rami{\phi_x})_*(\mu_{\Rami{G^{ad}}} 1_{(\Rami{G^{ad}})_i})\Rami{|_{G\cdot x}} \rangle\overset{\Rami{\text{Cor \ref{cor:push}}}}\leq
\\&{\leq}
\langle (|m|)|_{\Rami{G}\cdot x}, \alpha_{push}(i+ov_{G^{rss}}(x)) \mu_{\Rami{G}\cdot x}\rangle=
\\&=
\alpha_{push}(i+ov_{G^{rss}}(x)) \langle (|m|)|_{\Rami{G}\cdot x}, \mu_{\Rami{G}\cdot x}\rangle=
\\&=
\alpha_{av}(i+ov_{G^{rss}}(x)) \Omega(|m|)(x) 
\end{align*}
\end{proof}

\section{Application of  norm bounds}\label{ssec:norm.bnd}
In this section we explicate the results of \S\ref{sec:norm.bnd} in \RamiA{a} language suitable for further use.
\RamiA{We will use \Cref{not:norms.com} and the conventions given in \Cref{not:ov.and.ball}.}

We start with the following simple lemma:
\begin{lemma}\label{lem:cent.ball}
    Let $M<G$ be a standard Levi subgroup. Let $x\in M^{el}\cap G^{rss}$. Then $G_x=(G_x)'_{2n}Z(M)$.
\end{lemma}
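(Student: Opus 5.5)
We need to show that every $y\in G_x$ can be written as $y=za$ with $z\in Z(M)$ and $||a||'_{G_x}\le \ell^{2n}$, i.e. each component of $a$ has absolute value between $\ell^{-2n}$ and $\ell^{2n}$. The plan is to work block by block, using the fact that each block of $x$ is elliptic.

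\textbf{Step 1: identify $G_x$.} Write $M={}^{\lambda}G$ with $\lambda=(k_1,\dots,k_r)$, and $x=\mathrm{diag}(x_1,\dots,x_r)$ with each $x_j$ elliptic in $\GL_{k_j}(F)$. Each $p(x_j)$ is irreducible and separable, and since $x\in G^{rss}$ the $p(x_j)$ are pairwise coprime. Hence the decomposition of $p(x)$ into irreducible monic factors is exactly $\prod_j p(x_j)$. The centralizer is then
$$G_x=\prod_j \GL_{k_j}(F)_{x_j}\cong\prod_j E_j^\times,\qquad E_j=F[t]/p(x_j),$$
and this identification is the one used in defining $||\cdot||'_{G_x}$. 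Under it, $Z(M)$ becomes $\prod_j F^\times$, with $F^\times\subset E_j^\times$ embedded as scalars.

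\textbf{Step 2: reduce each component.} The statement now reduces to the following claim for a single field extension $E/F$ of degree $k\le n$: every $e\in E^\times$ can be written as $e=ca$ with $c\in F^\times$ and $\ell^{-2n}\le |a|_E\le \ell^{2n}$. The absolute value $|\cdot|_E$ here is the normalized one, namely $|N_{E/F}(\cdot)|_F^{1/k}$, as suggested by the computation $|t|_E=|b_0|^{k/n}$ in the proof of \Cref{lem:elsim}. For $c\in F^\times$ we have $|c|_E=|c|_F$, so the values $|c|_E$ range over $\ell^{\Z}$. The values $|e|_E$ lie in $\ell^{\frac1k\Z}$.

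\textbf{Step 3: choose $c$.} Choose $c=\varpi^{m}$, where $\varpi$ is a uniformizer of $F$ and $m$ is the integer nearest to $-\log_\ell|e|_E$. Then $|e/c|_E=\ell^{s}$ with $|s|\le 1/2$, which is certainly within $[\ell^{-2n},\ell^{2n}]$. This gives $||a||'_{E^\times}\le \ell\le\ell^{2n}$, and taking products over $j$ gives $G_x=(G_x)'_{2n}Z(M)$.

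\textbf{Main obstacle.} The only delicate point is checking the normalization of $|\cdot|_E$ used in the definition of $||\cdot||'_{G_x}$. If the unnormalized absolute value $|N_{E/F}(\cdot)|_F$ is intended instead, then $|c|_E=|c|_F^{k}$, and the same rounding argument yields $|a|_E\in[\ell^{-k},\ell^{k}]$ with $k\le n$. Either way the result fits within the bound $2n$, which explains the generous constant in the statement. The inclusion $\supseteq$ is trivial, since $Z(M)\subset G_x$.
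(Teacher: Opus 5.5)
Your proof is correct and follows the paper's argument: you reduce block by block, using $G_x=M_x\cong\prod_j E_j^\times$ with $Z(M)$ corresponding to $\prod_j F^\times$, to the single-field identity $E^\times=\{e\in E:\ell^{-2n}\le|e|_E\le\ell^{2n}\}\,F^\times$, which you obtain by rescaling with a power of a uniformizer. Your observation that the bound holds under either normalization of $|\cdot|_E$ settles a point the paper leaves implicit.
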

\begin{proof}
    First note that $G_x=M_x$. As both RHS and LHS are products of factors that correspond to the blocks of $M$ we can prove the equality for each block separately.  So we can reduce the statement to the case when $M=G$ (note that the rank of each block might be smaller than $n$, but the statement becomes weaker when we enlarge $n$). 

    In this case $G_x\cong E^\times$ when $E$ is certain field extension of $F$ \RamiA{of degree $\leq n$,} and under this identification $Z(M)$ is identified with $F^\times$. The assertion follows now from the equality:
    $$E^\times=\{e\in E:\, \ell^{-2n}<|e|_{E}<\ell^{2n} \} F^\times$$
\end{proof}
The following Lemma gives a uniform bound on the size of the compact part of maximal tori in $G$. 
\begin{lemma}\label{lem:bnd.ell.cent}
There exists a polynomial $\mdef{\alp_{ell}}\Rami{\in\N[t]}$ such that for any standard Levi subgroup $M<G$, and  
any $x\in M^{ell}\cap G^{rss}$  we have 
$G_{\alp_{ell}(ov_{G^{rss}}(x))} Z(M)\supset G_x$.
\end{lemma}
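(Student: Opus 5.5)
The plan is to combine \Cref{lem:cent.ball} with \Cref{prop:norm}. By \Cref{lem:cent.ball}, every $y\in G_x$ can be written as $y=y'z$ with $y'\in (G_x)'_{2n}$ and $z\in Z(M)$. So it suffices to bound $ov_G(y')$ for every $y'\in G_x$ with $ov'_{G_x}(y')\le 2n$, by a polynomial in $ov_{G^{rss}}(x)$.

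To get this bound, consider the pair $(x,y')\in Com^{rss}$. By definition,
\[
||(x,y')||'_{Com^{rss}}=\max(||x||_{G^{rss}},||y'||'_{G_x}),
\]
and hence
\[
ov'_{Com^{rss}}(x,y')\le \max(ov_{G^{rss}}(x),2n).
\]
By \Cref{prop:norm}, $||\cdot||'_{Com^{rss}}$ is a norm, so it is equivalent to the chosen norm $||\cdot||_{Com^{rss}}$. There is therefore $d>1$ with
\[
ov_{Com^{rss}}(x,y')\le d\,ov'_{Com^{rss}}(x,y')+d .
\]

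Next, $Com^{rss}\to G$ given by $(x,y)\mapsto y$ is a morphism of varieties. By \Cref{lem:pul.kot}\eqref{lem:pul.kot:1}, the pullback of $||\cdot||_G$ along it is $\prec ||\cdot||_{Com^{rss}}$. So there is $c>1$ with
\[
ov_G(y')\le c\,ov_{Com^{rss}}(x,y')+c .
\]
(Alternatively, use the closed embedding $Com^{rss}\subset G^{rss}\times G$, as in the proof of \Cref{lem:meascomp}.) Combining the two inequalities gives
\[
ov_G(y')\le c\bigl(d\max(ov_{G^{rss}}(x),2n)+d\bigr)+c .
\]
Since $ov_{G^{rss}}(x)\ge 0$, the right-hand side is at most $\alp_{ell}(ov_{G^{rss}}(x))$, where
\[
\alp_{ell}(t):=cd\,t+2ncd+cd+c+1 ,
\]
which lies in $\N[t]$ after rounding the constants up to integers. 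Therefore $y'\in G_{\alp_{ell}(ov_{G^{rss}}(x))}$, and so $y=y'z\in G_{\alp_{ell}(ov_{G^{rss}}(x))}Z(M)$.

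The substantive input is \Cref{prop:norm}, which we may assume. The remaining points are bookkeeping: the $\log_\ell$ conversion between the multiplicative $\prec$ relation and additive bounds on $ov$, and making sure the constants are uniform, i.e.\ independent of $M$ and $x$. Uniformity in $x$ is exactly what \Cref{prop:norm} provides. Uniformity in $M$ holds because there are only finitely many standard Levi subgroups, and in any case the constants above do not depend on $M$ at all. Note also that $2n$ is independent of the block sizes, as remarked in \Cref{lem:cent.ball}.
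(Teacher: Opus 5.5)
Your proof is correct and takes the same route as the paper. You combine the decomposition $G_x=(G_x)'_{2n}Z(M)$ from \Cref{lem:cent.ball} with the comparison of $||\cdot||'_{Com^{rss}}$ and the standard norm from \Cref{prop:norm}; the only difference is that you spell out the pullback along the projection $Com^{rss}\to G$ via \Cref{lem:pul.kot}, which the paper folds into the single inequality $||y||_G<d(||y||'_{G_x}||x||_{G^{rss}})^d$.
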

\begin{proof}
By \Cref{prop:norm} there exists $d\in\R_{>1}$  
such that for any $x\in G^{rss}$  and $y\in G_x$ we have 
\begin{equation}
||y||_G < d(||y||'_{G_x}||x||_{G^{rss}})^d.
\end{equation}
Take $\alp_{ell}(j):=d(2n+1)+dj$.
We obtain:
$$G_{\alp_{ell}(ov_{G^{rss}}(x))} Z(M)\supset G_{d+2nd+d ov_{G^{rss}}(x)} Z(M) \supset (G_x)'_{2n} Z(M)=G_x$$
Where the last equality follows from \Cref{lem:cent.ball}.
\end{proof}

The following is a uniform version of \cite[Corollary of Theorem 18]{HC_VD} which is valid in arbitrary characteristic:
\begin{lemma}\label{lem:pull}
There exists a polynomial $\mdef{\alpha_{pull}}:\N\to \N$ such that for any
\begin{itemize}
    \item $i\in \N$,
    \item standard Levi $M \sub G$, and 
    \item $x\in M^{el}\cap G^{rss}$
\end{itemize}
we have 
$$\phi_x^{-1}(G_i) \sub G^{ad}_{\alpha_{pull}(i+ov_{G^{rss}}(x))}(Z(M)/Z(G))$$
\end{lemma}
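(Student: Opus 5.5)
Given $[g]\in G^{ad}$ with $\phi_x([g])=gxg^{-1}\in G_i$, we need a representative of $[g]$ modulo $Z(M)/Z(G)$ whose norm is polynomially bounded in $i+ov_{G^{rss}}(x)$. The plan is to produce this representative in two stages. First we find some $g'$ of controlled norm conjugating $x$ to $y:=gxg^{-1}$, using the NDP of $\phi$ from \Cref{lem:phi}. Then we correct by the centralizer $G_x$, using \Cref{lem:bnd.ell.cent}, which says that $G_x$ is covered by a ball of controlled radius times $Z(M)$.

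\emph{Step 1.} Set $y:=gxg^{-1}$. Since $y$ is conjugate to $x$, we have $y\in G^{rss}$ and $\Delta(y)=\Delta(x)$, so
$$ov_{G^{rss}}(y)\le \max(i,ov_{G^{rss}}(x))\le i+ov_{G^{rss}}(x).$$
The pair $(y,x)$ equals $\phi(g,x)$ for the map $\phi(h,z)=(h^{-1}zh,z)$ of \Cref{lem:phi}, with $h=g^{-1}$ (up to matching the order of coordinates; we may swap $x$ and $y$ if needed). By the NDP of $\phi$, the norm $\phi_*(||\cdot||_{G\times G^{rss}})$ is equivalent to $||\cdot||_{G^{rss}\times G^{rss}}$ restricted to the image of $\phi$. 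Hence there exist constants $c$ (independent of $x,y$) and some $h'$ with $\phi(h',x)=(y,x)$ and
$$ov_G(h')\le c\bigl(i+ov_{G^{rss}}(x)\bigr)+c.$$
The fixed norms ($\max$ of entries and $\det^{-1}$, with $|\Delta^{-1}|$ added) are representatives of the canonical norm classes. Therefore the abstract $\prec$ bounds translate into affine-linear bounds on the $ov$'s, exactly as in the proof of \Cref{lem:meascomp}. Set $g':=h'^{-1}$. Then $g'xg'^{-1}=y$, and $ov_G(g')$ is controlled, since inversion changes $ov_G$ by at most a factor linear in $n$ (entries of the inverse are cofactors divided by the determinant).

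\emph{Step 2.} Now $g'^{-1}g\in G_x$. By \Cref{lem:bnd.ell.cent}, $G_x\subset G_{\alp_{ell}(ov_{G^{rss}}(x))}Z(M)$, so we can write $g'^{-1}g=kz$ with $ov_G(k)\le \alp_{ell}(ov_{G^{rss}}(x))$ and $z\in Z(M)$. Then $g=g'kz$, and submultiplicativity gives $ov_G(g'k)\le ov_G(g')+ov_G(k)+O(1)$; the entries part is immediate, and the determinant valuations add. Passing to $G^{ad}$, we get
$$[g]\in (G^{ad})_{j}\,(Z(M)/Z(G)),\qquad j:=ov_G(g'k),$$
because $ov_{G^{ad}}\le ov_G$ by the definition of $||\cdot||_{G^{ad}}$ as a pushforward. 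We then take $\alpha_{pull}(t):=c_1t+c_1+\alp_{ell}(t)$ for a suitable constant $c_1$.

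The main obstacle is Step 1. One has to make sure that the NDP of $\phi$ genuinely yields a preimage with an explicit polynomial bound uniformly over all $x$; this is exactly the content of \Cref{lem:phi}. One also has to convert carefully between the canonical norm classes on $G\times G^{rss}$ and $G^{rss}\times G^{rss}$ and the fixed norms $||\cdot||_G$ and $||\cdot||_{G^{rss}}$, including handling the infimum in $\phi_*$ (we take a preimage attaining the norm up to a factor $2$). The elliptic hypothesis enters only through \Cref{lem:bnd.ell.cent}, and that is what allows the non-compact part of $G_x$ to be absorbed into $Z(M)$.
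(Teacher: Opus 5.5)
Your proposal is correct and follows the paper's proof essentially step for step. The paper also uses the NDP of $\phi$ from \Cref{lem:phi} to get a conjugator $g$ with $gxg^{-1}=zxz^{-1}$ and $||g||_G<c(||x||_{G^{rss}}||zxz^{-1}||_{G^{rss}})^c$. It then writes $g^{-1}z\in G_x$ as a bounded element times an element of $Z(M)$ via \Cref{lem:bnd.ell.cent}, and concludes by submultiplicativity of $||\cdot||_G$, with $\alp_{pull}(j)=c+2cj+\alp_{ell}(j)$.

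Your use of $\Delta(y)=\Delta(x)$ to get $ov_{G^{rss}}(y)\le\max(i,ov_{G^{rss}}(x))$ is slightly more careful than the paper, which bounds $||zxz^{-1}||_{G^{rss}}$ simply by $\ell^i$.
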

\begin{proof}
    By \Cref{lem:phi} there exists $c\in\R_{>1}$ such that
    for any $x\in G^{rss}$  and $y\in Ad(G)\cdot x$ there is $g\in G$ such that:
\begin{eqnarray}    
        &gxg^{-1}=y\\        &||g||_G<c(||x||_{G^{rss}}||y||_{G^{rss}})^c
\end{eqnarray}

Take $$\alp_{pull}(j)=c+2cj+\alp_{ell}(j).$$
Fix $i,M,x$ as in the lemma. Let $z\in G$ such that $zxz^{-1}\in G_i$. We have to find $z_0\in Z(M)$ such that $z zsuch that_0\in G_{\alpha_{pull}(i+ov_{G^{rss}}(x))}$. By the above we can find $g$ such that
\begin{itemize}    
        \item $gxg^{-1}=zxz^{-1}$
        \item $||g||_G<c(||x||_{G^{rss}}\ell^i)^c$
\end{itemize}
Let $z_1=g^{-1}z$. We get that $z_1\in G_x$. By \Cref{lem:bnd.ell.cent} we can write $z_1=z_2 z_3$ such that $z_3\in Z(M)$ \Dima{and} $ov_G(z_2)\leq \alp_{ell}(ov_{G^{rss}}(x))$.
\Dima{Take $z_0=z_3^{-1}\in Z(M)$. Then}
\begin{align*}
||zz_0||_G&=||g z_1  z_3^{-1}||_G=||g z_2||_G\leq ||g||_G ||z_2||_G\leq \\
&\leq
c(||x||_{G^{rss}}\ell^i)^c   \ell^{\alp_{ell}(ov_{G^{rss}}(x))}  = \\
&=c(||x||_{G^{rss}})^{c} \ell^{ic+ \alp_{ell}(ov_{G^{rss}}(x))}\leq \ell^{\alp_{pull}(i+ov_{G^{rss}}(x))}
\end{align*}

\end{proof}

\section{$A$-cuspidal functions}\label{subsec:Acusp}
\begin{notation}
For any $a\in T$, denote 
$$\mdef{V_a}:=\left\{g\in G\,\,|\lim_{\RamiA{i\to \infty}} Ad(a)^{\RamiA{i}}(g)=1\right\}$$ and 
$$\mdef{P_a}:=\left\{g\in G\,\,|\text{ the sequence } \RamiA{\{Ad(a)^{\RamiA{i}}(g)\}_{i\in\N}} \text{ is bounded} \right\}.$$
It is well known (see e.g. \cite{Deligne76}) that $P_a=\bfP_a(F)$ for a corresponding parabolic subgroup $\bfP_a<\bfG$ and $V_a=\bfV_a(F)$ where $\bfV_a$ is the unipotent radical of $\bfP_a$.
We also consider 
$V_a$ as a subgroup of $G^{ad}$.
\end{notation}

\begin{definition}\label{def:A.cusp}
Let $A<T$ be a standard torus. \begin{enumerate}
    \item 
We say that a function $f\in C^{\infty}(G^{ad})$ 
is \tdef{$A$-cuspidal} if for any non-central $a\in A$, \Dima{and any }$x\in G$ we have $$\int_{V_a}f(xu) du=0,$$ where $du$ is a Haar measure on $V_a$.
\item Let $i\in \N$. We say that $f\in C^{\infty}(G^{ad})$ is \tdef[adapted]{$\RamiA{(A,i)}$-adapted} if it:
\begin{enumerate}
    \item is $A$-cuspidal,
    \item is right-$A/Z(G)$-invariant, and
    \item satisfies $\supp(f)\subset G^{ad}_i\cdot (A/Z(G))$.
\end{enumerate}
\end{enumerate}
\end{definition}

In this section we prove the following:
\begin{thm}\label{thm:cusp.stab}
There exists a polynomial $\mdef{\alpha_{stab}}:\N\to \N$ such that for any 
\begin{itemize}
\item standard torus $A<T$,
    \item integer $i\in \N$,
    \item $y\in G_i$, 
    \item   $(A,i)$-adapted function $f\in    C^{\infty}(G^{ad})$, and 
    \item $j>j_0:={\alpha_{stab}(i)}$
\end{itemize}
we have 
    $$\int_{G^{ad}_{j}} f(xy) dx=\int_{G^{ad}_{j_0}} f(xy) dx,$$
            \RamiA{where the integral is taken w.r.t. a Haar measure on $G^{ad}$.}

\end{thm}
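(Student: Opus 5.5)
The plan is Harish-Chandra's argument for the stabilization of integrals of cusp forms, carried out with explicit control of norms so that every constant depends only on $i$ and not on $A$, $y$ or $f$. There are only finitely many standard tori, so we may fix $A$ and take the maximum of the resulting polynomials at the end. Throughout we use two facts about norms: $ov_G(gh)\le ov_G(g)+ov_G(h)$, and $ov_G(g^{-1})\le (n-1)\,ov_G(g)$ (up to a constant), by Cramer's rule.

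\emph{Step 1 (reduction to a difference over a shell).} It suffices to show that
$$\int_{G^{ad}_{j+1}\smallsetminus G^{ad}_{j}} f(xy)\,dx=0\qquad\text{for all } j\ge \alpha_{stab}(i).$$
On the support of the integrand, $xy\in G^{ad}_i\cdot (A/Z(G))$, so $x=g\,a\,y^{-1}$ with $g\in G^{ad}_i$ and $a\in A/Z(G)$. Since $ov(x)>j$ and $g,y$ are bounded in terms of $i$, the element $a$ is large: $ov(a)\ge j-c\,i$. In particular $a$ is non-central once $j\gg i$.

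\emph{Step 2 (chamber decomposition).} Partition the relevant part of $A/Z(G)$ into finitely many Weyl-type chambers, i.e. the regions where the valuations $\valua(a_r/a_s)$ of the block entries are ordered in a fixed way. For $a$ in a given chamber, the parabolic $P_a$ and its radical $V_a$ depend only on the chamber, and $a$ contracts $V_a$ under $Ad(a)^{-1}$. For each chamber we further split the shell into pieces of the form $g\,a\,K'\,y^{-1}$, where $K'$ is a fixed small compact open subgroup. This is legitimate because $f$ is right-$A$-invariant and the norms are bi-$K_0$-invariant, so every quantity involved is locally constant on such pieces.

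\emph{Step 3 (key step: apply cuspidality on cosets of $V_a$).} Fix $g$ and $a$, and consider $u\mapsto f(g\,a\,u)$ for $u\in V_a$. By right $A$-invariance,
$$f(gau)=f\bigl(g\,(aua^{-1})\bigr).$$
This is nonzero only if $g\,aua^{-1}\in G^{ad}_i A$, which forces $aua^{-1}$ into a compact subset of $V_a$ of norm $\le c'i$. Now $a$ is deep in the chamber, so conjugating back by $a^{-1}$ contracts, and the set of relevant $u$ is a compact subset $U_{g,a}\subset V_a$. We then need to show:
\[
\textit{for } j\ge\alpha_{stab}(i),\ \textit{whether } g\,a\,u\,y^{-1}\in G^{ad}_{j+1}\smallsetminus G^{ad}_j \textit{ does not depend on } u\in U_{g,a}.
\]
Granting this, the integral over each coset $g\,a\,V_a$ contributes a constant multiple of $\int_{V_a} f(gau)\,du$. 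That integral vanishes by $A$-cuspidality, applied with the non-central element $a\in A$ and the point $x=ga$ of the definition. Summing over cosets gives zero.

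\emph{Main obstacle.} The hard part is the displayed independence claim in Step 3, uniformly in $a$ and $y$. We must show that right multiplication by $u$, with $aua^{-1}$ bounded, changes $ov(g\,a\,u\,y^{-1})$ only when $a$ sits near a chamber wall. The first approach is to compute $ov$ of $g\,a\,u\,y^{-1}$ entrywise for block matrices. Since $u$ is block upper triangular and $a$ is block scalar, the dominant entries of $a\,u$ should be those of $a$ itself once the gap $ov(a)-c'i$ exceeds the size of $u$. If instead $a$ lies near a wall, we replace $P_a$ by the parabolic of a smaller standard torus $A'\subset A$, using $A'\subset A$, so $A$-cuspidality gives cuspidality for every non-central $a'\in A'$, and argue by induction on the number of chamber walls. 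The polynomial $\alpha_{stab}$ is then assembled from the linear bounds above, maximized over the finitely many standard tori.
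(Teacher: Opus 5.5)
Your proposal has a genuine gap in Step 3, and it is structural rather than computational. To conclude ``summing over cosets gives zero'' you need to decompose the Haar integral over the shell into integrals along cosets of a unipotent radical. Fubini, $\int_{G^{ad}}F=\int_{G^{ad}/V}\int_V F(xu)\,du\,d\dot x$, is available only for one fixed radical $V$. To use different radicals you must cut the integrand into regions that are right-$V$-invariant on the support of $f$.

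In your scheme this consistency is not available:
\begin{itemize}
\item the radical changes with the chamber of $a$;
\item the representation $xy=ga$ is not unique, since bounded changes of $a$ give the same point;
\item near walls you switch to the radical of a smaller torus $A'$.
\end{itemize}
You give no partition of the support into pieces stable under their assigned radical. The fact that each point has some good radical through it does not by itself make the integral vanish. Step 2 does not repair this: you cut the shell into small pieces $gaK'y^{-1}$, but the cosets $gaV_a$ used in Step 3 leave those pieces.

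The displayed independence claim is also left unproved, and it is false near walls, as you note. Deep in a chamber it is actually easy, without any entrywise computation. With your convention, Corollary~\ref{cor:eta.inv} gives $aua^{-1}\in G_{\alpha_\eta(i)}$ for every relevant $u$. Hence $u$ lies in a small congruence subgroup, so $yuy^{-1}\in K_0$, and right $K_0$-invariance gives $ov_{G^{ad}}(gauy^{-1})=ov_{G^{ad}}(gay^{-1})$. Note that in the paper $Ad(a)$ contracts $V_a$, so your $V_a$ is the paper's $V_{a^{-1}}$; with the paper's $V_a$ the relevant $u$ would be large.

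The missing idea is to localize to compact open cosets, so that no global choice of parabolic is ever needed. The paper shows that each single integral $\int_{K_i^{ad}} f(xky)\,dk$ vanishes once $ov_{G^{ad}}(x)$ is large (Proposition~\ref{lem:HC20}). Shells are unions of such cosets because the norm is bi-$K_0$-invariant, so stabilization follows at once and no independence claim is needed.

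For $x=ga$ with $a$ deep, the vanishing goes as follows (Lemma~\ref{lem:HC20.ex}):
\begin{itemize}
\item Right $A$-invariance gives $\int_{K_i^{ad}} f(gak)\,dk=\int_{aK_i^{ad}a^{-1}} f(gk)\,dk$.
\item The Iwahori factorization $aK_ia^{-1}=(aK_ia^{-1}\cap K_0)(aK_ia^{-1}\cap V_{a^{-1}})$ (Lemma~\ref{lem:iva}) reduces this to an integral over $aK_ia^{-1}\cap V_{a^{-1}}$.
\item That set contains $V_{a^{-1}}\cap G_{\alpha_{cusp}(i)}$ (Corollary~\ref{lem:cont}), hence the whole support of $u\mapsto f(hu)$.
\item So each inner integral is a full unipotent integral, which vanishes by Corollary~\ref{cor:eff.cusp}.
\end{itemize}
Near-wall cosets are handled one at a time: write $x\in G^{ad}_{\alpha_{deep}(i)}(A''/Z(G))$ for a proper subtorus $A''$ and induct (Lemma~\ref{lem:HC20.y.1.ind}). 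The element $y$ is removed using $K_{i''}\subset y^{-1}K_{i'}y$ (Lemma~\ref{lem:conj}). Your induction on walls is the right instinct, but it becomes valid only when run coset by coset in this way.
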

Let us first describe the steps of the proof.
\begin{enumerate}
    \item It is easy to deduce the theorem from the fact that the integral of $f$ over $xK_iy$ vanishes whenever $x$ is far enough from the center (in comparison to $i$). This statement is \Cref{lem:HC20}.
    \item We deduce  \Cref{lem:HC20} from the case $y=1$. This case is \Cref{lem:HC20.y.1}.
    \item Since the support of $f$ is close to $A$, we can write $x=x'a$ where $a\in A$ and $x'$ is relatively small. 
    \item We use the right-$A$-invariance of $f$ in-order to replace the integral by an integral over $x'aKa^{-1}$.
    \item If $a$ is far away from any 
    proper standard subtorus $A'<A$, 
    then $aKa^{-1}$ is very similar to 
    $V_{a^{-1}}$ (see \S\ref{sssec:conj}). So we deduce the result 
    from an effective version of 
    cuspidality of $f$ (see  \Cref{cor:eff.cusp} below). 
    This case is treated in \Cref{lem:HC20.ex} below.    
    \item In order to deduce \Cref{lem:HC20.y.1} in the general case we use the last step and induction on the rank of $A$.
\end{enumerate}

We now perform these steps formally.
\subsection{Effective 
cuspidality}
\RamiA{In this subsection} we 
give effective version of cuspidality, see \Cref{cor:eff.cusp} below.
\begin{lemma}\label{lem:AV.ndp}
    Let $\bfA<\bfT$ be a standard torus and let  $a\in A$. Then the multiplication map $\bfV_a \times(\bfA/Z(\bfG)) \to \bfG^{ad}$ has the norm descent property.
\end{lemma}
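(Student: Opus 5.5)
The plan is to show that the multiplication map $\mu:\bfV_a\times(\bfA/Z(\bfG))\to\bfG^{ad}$ is a closed embedding. Once this is done, the map is finite, and \Cref{lem:fin.ndp} gives the NDP at once.

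The first step is injectivity together with identification of the image. Set $\bfL:=Z_\bfG(\bfA)$, the standard Levi whose center is $\bfA$. The subgroup $\bfA$ lies in $\bfL$, and $\bfL$ normalizes $\bfV_a$. The intersection $\bfV_a\cap \bfA Z(\bfG)$ is trivial, since $\bfV_a$ is unipotent and $\bfA Z(\bfG)$ is a torus. Hence $\mu$ is injective on points, and the image is the subgroup $\bfV_a\rtimes(\bfA/Z(\bfG))$ of the parabolic $\bfP_a/Z(\bfG)$.

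The second step is to show this subgroup is closed and $\mu$ is an isomorphism onto it. The Levi decomposition $\bfP_a\cong\bfV_a\rtimes\bfL_a$ with $\bfL_a=Z_\bfG(a)$ is an isomorphism of varieties, given by projecting $\bfP_a\to\bfL_a$ and then $p\mapsto p\,\pi(p)^{-1}\in\bfV_a$. The subgroup $\bfA\subset\bfL_a$ is closed: both $\bfA$ and $\bfL_a$ are block-diagonal, and $\bfA$ is cut out by equality of scalars within blocks. Since $\bfP_a$ is closed in $\bfG$, the product $\bfV_a\bfA$ is closed in $\bfG$. Passing to the quotient by $Z(\bfG)\subset\bfA$, the set $\bfV_a(\bfA/Z(\bfG))$ is closed in $\bfG^{ad}$. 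The isomorphism $\bfV_a\times\bfA\cong\bfV_a\bfA$ descends modulo $Z(\bfG)$, so $\mu$ is a closed embedding.

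The remaining step, and the only subtlety, is bookkeeping for the quotient by $Z(\bfG)$, since $G^{ad}\subsetneq\bfG^{ad}(F)$. The NDP here is a statement about $F$-points of varieties, so I work throughout with $\bfG^{ad}$ and its $F$-points, where finiteness gives the NDP regardless. If instead one wants the statement for the norm $||\cdot||_{G^{ad}}=ad_*(||\cdot||_G)$ on $G^{ad}$, there is an alternative route. First deduce the NDP of $\bfV_a\times\bfA\to\bfG$ from the closed embedding. Then combine it with the NDP of $\bfA\to\bfA/Z(\bfG)$ and of $\bfG\to\bfG^{ad}$ using \Cref{lem:Kot.comp}. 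The second of these follows from the cited lemma \cite[Lemma 18.9]{Kot} for the torus $Z(\bfG)$, and surjectivity on $F$-points holds by Hilbert 90 for split tori.
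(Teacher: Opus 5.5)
Your proposal is correct and is essentially the paper's argument: the paper's entire proof is that this map is a closed embedding, hence finite, so \Cref{lem:fin.ndp} applies, and you supply the verification that it is a closed embedding. There are two harmless slips: the Levi factor of $\bfP_a$ containing $\bfT$ is $\{g : g_{jk}=0 \text{ unless } |a_j|=|a_k|\}$, which is in general larger than $Z_{\bfG}(a)$ (for example when $a$ has distinct entries of equal absolute value), but it still contains $\bfA$, so your closedness argument goes through unchanged; and Hilbert 90 for $Z(\bfG)\cong\mathbb{G}_m$ gives $G^{ad}=\bfG^{ad}(F)$, so the bookkeeping in your last paragraph is unnecessary.
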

\begin{proof}
Follows from  \Cref{lem:fin.ndp} as this map is a closed embedding.
\end{proof}

\begin{cor}\label{cor:int}
    There exists a polynomial $\mdef{\alp_{int}}\in\N[t]$ such that 
    for any 
    \begin{itemize}
        \item standard  torus $\bfA<\bfT$
        \item $a\in A$
        \item $i\in \N$
    \end{itemize}
    we have:    $$G^{ad}_i (A/Z(G)) \cap V_a\subset G^{ad}_{\alp_{int}(i)}.$$  
\end{cor}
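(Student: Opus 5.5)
Since there are only finitely many standard tori $A<T$, and for a fixed $A$ the parabolic $\bfP_a$ (hence $\bfV_a$) depends only on which standard parabolic is attached to $a$, there are only finitely many pairs $(\bfA,\bfV_a)$ as $a$ ranges over $A$ and $A$ over standard tori. It therefore suffices to produce, for each of these finitely many pairs, a polynomial, and then take $\alp_{int}$ to be the sum of them (which dominates each, since all lie in $\N[t]$). So fix $\bfA$ and $\bfV:=\bfV_a$.

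Take $g\in G^{ad}_i (A/Z(G))\cap V$, and write $g=hb$ with $h\in G^{ad}_i$ and $b\in A/Z(G)$. The plan is to bound $ov_{G^{ad}}(g)$ in terms of $i$ alone by using the norm descent property of the multiplication map $m:\bfV\times(\bfA/Z(\bfG))\to\bfG^{ad}$ from \Cref{lem:AV.ndp}. First I would note that $h=g b^{-1}$ lies in the image $V\cdot (A/Z(G))$. By NDP there exists $c>1$ such that every element $h$ of $V\cdot(A/Z(G))$ can be written as $h=v'b'$ with $\max(||v'||_V,||b'||_{A/Z(G)})<c||h||_{G^{ad}}^c$. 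Because $m$ is a closed embedding, and in particular injective, this decomposition is unique, so $v'=g$ and $b'=b^{-1}$.

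Consequently $||g||_V<c\,\ell^{ci}$. Applying \Cref{lem:pul.kot}\eqref{lem:pul.kot:1} to the inclusion $\bfV\to\bfG^{ad}$ gives $||g||_{G^{ad}}\le c'||g||_V^{c'}$, hence $ov_{G^{ad}}(g)\le C(i+1)$ for some constant $C$. Here I need $||\cdot||_{G^{ad}}=ad_*(||\cdot||_G)$ to be a norm on $G^{ad}\subset\bfG^{ad}(F)$ compatible with Kottwitz norms. This holds by the NDP of $\bfG\to\bfG/Z(\bfG)$, which is the lemma cited from \cite[Lemma 18.9]{Kot}. We then take a linear polynomial dominating $C(i+1)$.

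I expect the only subtle step to be this uniformity. Each step uses constants that depend on the variety, so it is essential that only finitely many varieties occur. The point to check is that $\bfV_a$ is determined by the finitely many possible sign patterns of the valuations of the ratios of the entries of $a$.
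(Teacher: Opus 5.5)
Your proposal is correct and follows essentially the same route as the paper: the norm descent property of the closed embedding $\bfV_a\times(\bfA/Z(\bfG))\to\bfG^{ad}$ (\Cref{lem:AV.ndp}), uniqueness of the decomposition $gb^{-1}=g\cdot b^{-1}$, and summing the resulting polynomials over the finitely many pairs $(A,V_a)$. Your detour through $||\cdot||_V$ and \Cref{lem:pul.kot}, and your check that $ad_*(||\cdot||_G)$ is equivalent to the restriction of a norm on $\bfG^{ad}(F)$, only make explicit steps that the paper leaves implicit.
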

\begin{proof}
\RamiA{Let $A<G$ be a standard torus.}    By \Cref{lem:AV.ndp}, for any $a\in A$, we can find $c_a\in\R_{>1}$ such that for any $b\in A/Z(G)$ and $u\in V_a$ we have:
    \begin{equation}        \label{eq:norm.u}
    ||u||_{G^{ad}}<c_a(||ub||_{G^{ad}})^{c_a}
    \end{equation}
    Let $a_1,\dots,a_N\in A$ be such that 
    $$\{V_{a_k}|k=1,\dots,N\}= 
    \{V_a|a\in A\}.$$ 
    Take $$\alpha^{\RamiA{A}}_{int}(i)=\sum_{k=1}^{N}(c_{a_k}+ ic_{a_k}).$$
    Fix $A,a,i$ as in the corollary. Let  $x\in (G^{ad}_i (A/Z(G))) \cap V_a$. We have to show that $ov_{G^{ad}}(x)\leq \alp_{int}(i)$.
    Write $x=yb$ where $y\in G^{ad}_i$ and $b\in A/Z(G)$. We have $y=x b^{-1}$. By \eqref{eq:norm.u} above $||x||_{G^{ad}}<c_a(||y||_{G^{ad}})^{c_a}\leq c_a i^{c_a}$. Therefore 
    $$ov_{G^{ad}}(x)\leq  c_a+ ic_a \leq \alp^{\RamiA{A}}_{int}(i).$$
\RamiA{Take $\alpha_{int}=\sum_A \alpha^A_{int}$, where $A$ ranges over all standard tori of $G$. We get that for any $\bfA,a,i$ as above we have $$(G^{ad}_i (A/Z(G))) \cap V_a\subset G^{ad}_{\alp^A_{int}(i)}\subset G^{ad}_{\alp_{int}(i)}.$$}
\end{proof}
\begin{notation}
    For $x\in G$ and a subgroup $H<G$ denote by ${\eta_{x,H}}:H\to G$  the map given by $$\mdef{\eta_{x,H}}(y):=xy.$$
\end{notation}
\begin{cor}\label{cor:eta.inv}
    There exists a polynomial 
     $\alp_{\eta}\in\N[t]$ such that for any 
    \begin{itemize}
        \item standard torus $A<T$,
        \item $a\in A$,
        \item $i\in \N$, and 
        \item $x\in G^{ad}_i$
    \end{itemize}
    we have:
    $$\eta_{x,V_a}^{-1}(G^{ad}_i(A/Z(G))) \subset G_{\alp_{\eta}(i)}.$$   
\end{cor}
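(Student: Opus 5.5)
We prove the statement by reducing it directly to \Cref{cor:int}, using that norms on $G^{ad}$ are submultiplicative up to the normalization fixed in \S\ref{sec:Norms}. Fix a standard torus $A<T$, $a\in A$, $i\in\N$ and $x\in G^{ad}_i$. Take $u\in V_a$ with $\eta_{x,V_a}(u)=xu\in G^{ad}_i(A/Z(G))$; we must bound the norm of $u$. Write $xu=yb$ with $y\in G^{ad}_i$ and $b\in A/Z(G)$. Then
$$u=x^{-1}yb\in (x^{-1}y)(A/Z(G)),$$
so the whole argument comes down to bounding $x^{-1}y$.

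The first step is to estimate $ov_{G^{ad}}(x^{-1}y)$ in terms of $i$. For $g\in G$ one has $||g^{-1}||_G\le ||g||_G^{n}$ up to a constant depending only on $n$: the entries of $g^{-1}$ are cofactors divided by $\det g$, and $||\det g||_F^{-1}$ is part of $||g||_G$. Moreover $||gh||_G\le C\,||g||_G||h||_G$. Since $||\cdot||_{G^{ad}}$ is the pushforward of $||\cdot||_G$, we may choose lifts realizing the norms of $x$ and $y$ and pass these inequalities to $G^{ad}$. This gives $ov_{G^{ad}}(x^{-1}y)\le c_0(i+1)$ for a constant $c_0$ depending only on $n$. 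Consequently $u\in G^{ad}_{c_0(i+1)}(A/Z(G))\cap V_a$.

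The second step applies \Cref{cor:int}, which yields $u\in G^{ad}_{\alp_{int}(c_0(i+1))}$ as an element of $G^{ad}$. Since the statement asks for $u\in G_{\alp_\eta(i)}$ as an element of $G$, we still need to compare $ov_G(u)$ with $ov_{G^{ad}}(u)$ for unipotent $u$. Any lift of $u$ is $zu$ with $z\in Z(G)$. For such a lift, $||zu||_G\ge ||\det(zu)||^{-1}_F$ and $||zu||_G\ge |z|\cdot|u_{11}|=|z|$, because $u$ is unipotent upper triangular with respect to the parabolic $P_a$, up to a permutation, so it has diagonal entries equal to $1$. Since $\det(zu)=z^n$, one of these two quantities is at least $\max(|z|,|z|^{-1})$, and hence $\max(|z|,|z|^{-1})\le ||zu||_G$. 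It follows that $||u||_G\le ||z^{-1}||\,||zu||_G\le ||zu||_G^{2}$ up to a constant. Setting $\alp_\eta(i):=2\alp_{int}(c_0(i+1))+c_1$ then finishes the proof.

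The main obstacle is bookkeeping rather than any new idea. One has to check the submultiplicativity and inverse bounds for the specific norm $||\cdot||_G$ together with its pushforward to $G^{ad}$, and the comparison between $G$ and $G^{ad}$ norms on $V_a$. Uniformity over $A$ and $a$ comes for free, because \Cref{cor:int} is already uniform in both and there are only finitely many standard tori.
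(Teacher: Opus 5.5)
Your proof is correct and follows the paper's argument: the paper also writes $\eta_{x,V_a}^{-1}(G^{ad}_i(A/Z(G)))=(x^{-1}G^{ad}_i(A/Z(G)))\cap V_a$, enlarges $x^{-1}G^{ad}_i$ to a bigger ball, and applies \Cref{cor:int}. Your version is more careful than the paper's on two points: the paper asserts $x^{-1}G^{ad}_i\subset G^{ad}_{2i}$, whereas for the fixed norms inversion can multiply $ov$ by roughly $n$, so a linear bound like your $c_0(i+1)$ is what actually holds; and the paper passes silently from $G^{ad}$-balls to $G$-balls on $V_a$, a step your determinant and diagonal-entry argument justifies.
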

\begin{proof}
Take $\alp_\eta(i)=\alpha_{int}(2i)$. By \Cref{cor:int} we have
    \begin{multline*}    
    \eta_{x,V_a}^{-1}(G^{ad}_i(A/Z(G)))= (x^{-1}G^{ad}_i (A/Z(G))) \cap V_a\subset \\(G^{ad}_{2i} (A/Z(G))) \cap V_a\subset G_{\alp_{int}(2i)}=G_{\alp_\eta(i)}.
    \end{multline*}
    \end{proof}

\begin{cor}\label{cor:eff.cusp}
    There exists a polynomial $\mdef{\alp_{cusp}}\in\N[t]$ such that 
    for any 
    \begin{itemize}
        \item standard torus $A<T$,
        \item $a\in A$,
        \item $i\in \N$,
        \item $\RamiA{(A,i)}$-adapted $f\in C^{\infty}(G^{ad})$,
        \item $x\in G^{ad}_i$, and
        \item a compact $\Omega\sub V_a$ such that $V_a\cap G^{ad}_{\alp_{cusp}(i)} \sub \Omega$
    \end{itemize}
    we have:    $$\int_{\Omega}f(xu)du=0,$$  
    \RamiA{where the integral is taken w.r.t. a Haar measure on $V_a$.}
    \end{cor}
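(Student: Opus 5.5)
We take $\alp_{cusp}:=\alp_\eta$ from \Cref{cor:eta.inv}. The idea is that the integrand $u\mapsto f(xu)$ on $V_a$ is supported inside the compact ball $V_a\cap G^{ad}_{\alp_\eta(i)}$. Since $\Omega$ contains this ball, the integral over $\Omega$ equals the integral over all of $V_a$, and the latter vanishes by $A$-cuspidality.

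We first dispose of the trivial case. If $a$ is central, then $V_a=\{1\}$. The statement is then either vacuous or the integral is over a point; in the latter case we handle it by noting that $V_a$ is trivial only for central $a$, for which the hypothesis of cuspidality is not imposed. We restrict to non-central $a$, which is the intended content. For central $a$ the assertion would read $f(x)=0$, which is false in general, so the statement must be read for non-central $a$, as in \Cref{def:A.cusp}.

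\textbf{Support step.} Fix $A,a,i,f,x,\Omega$ as in the statement, with $a$ non-central. Since $f$ is $(A,i)$-adapted, $\supp(f)\subset G^{ad}_i\cdot (A/Z(G))$. Hence if $u\in V_a$ satisfies $f(xu)\neq 0$, then
\[
u\in \eta_{x,V_a}^{-1}\bigl(G^{ad}_i(A/Z(G))\bigr).
\]
Because $x\in G^{ad}_i$, \Cref{cor:eta.inv} shows this set lies in $G^{ad}_{\alp_\eta(i)}\cap V_a$. Here we interpret the subscript $G_{\alp_\eta(i)}$ there via its image in $G^{ad}$, as in the proof of \Cref{cor:int}. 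Consequently $u\mapsto f(xu)$ on $V_a$ is supported in $V_a\cap G^{ad}_{\alp_{cusp}(i)}\subset\Omega$. It is therefore compactly supported, and it is locally constant since $f$ is smooth.

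\textbf{Conclusion.} By the support step, $\int_\Omega f(xu)\,du=\int_{V_a} f(xu)\,du$. The right-hand side vanishes because $f$ is $A$-cuspidal and $a\in A$ is non-central. Lifting $x$ to $G$ if needed is harmless, since $V_a$ embeds in $G^{ad}$ and $f$ is a function on $G^{ad}$.

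The main obstacle is only bookkeeping: making sure the bound from \Cref{cor:eta.inv} is uniform in $A$ and $a$ and is stated for $G^{ad}$-norms. This uniformity is exactly what \Cref{cor:int} provides, since there are finitely many standard tori and finitely many groups $V_a$. No further analytic input is needed.
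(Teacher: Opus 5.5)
Your proof is correct and is essentially the paper's argument: the paper also takes $\alp_{cusp}=\alp_\eta$, uses \Cref{cor:eta.inv} to place the support of $u\mapsto f(xu)$ inside $V_a\cap G^{ad}_{\alp_{cusp}(i)}\subset\Omega$, and then concludes from the vanishing of $\int_{V_a}f(xu)\,du$ given by $A$-cuspidality. Your remark that $a$ must be non-central is a correct refinement that the paper's statement leaves implicit: for central $a$ one has $V_a=\{1\}$, and the integral is then a multiple of $f(x)$, which need not vanish.
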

\begin{proof}
    Take $\alp_{cusp}=\alp_{\eta}$. By the assumption and the last corollary \Rami{(\Cref{cor:eta.inv})} we have   
\begin{equation}\label{eq:included}
\begin{aligned}
    \supp(\eta_{x,V_a}^*(f)) &= \eta_{x,V_a}^{-1}(\supp(f))  \subset \eta_{x,V_a}^{-1}(G^{ad}_i (A/Z(G)))
    \\&\subset V_a \cap  G^{ad}_{\alp_{cusp}(i)} \subset \Omega.
\end{aligned}
\end{equation}
     Thus,
    $$\int_{\Omega}f(xu)du=-\left(\int_{V_a}f(xu)du-\int_{\Omega}f(xu)du\right)=-\int_{V_a\smallsetminus \Omega}f(xu)du=\int_{V_a\smallsetminus \Omega}\eta_{x,V_a}^*(f)(u)du\overset{\eqref{eq:included}}{=}0$$
\end{proof}
\subsection{Conjugation of congruence subgroups}\label{sssec:conj}
Here we study the behavior of $aK_ia^{-1}$ when $a$ is large.
\begin{lemma}\label{lem:iva}
For any $a\in T$  and $i\in \N_{>1}$ we have 
$$a^{-1}K_i a= (a^{-1}K_ia\cap K_0) (a^{-1}K_i a\cap V_a)$$
\end{lemma}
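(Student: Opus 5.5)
The plan is to deduce the equality from an Iwahori-type factorization of $K_i$ with respect to the parabolic $P_a$ and its opposite. Write $a=\mathrm{diag}(a_1,\dots,a_n)$. Since $(a^kga^{-k})_{rs}=(a_r/a_s)^kg_{rs}$, the group $V_a$ consists of unipotent matrices supported, off the diagonal, on the positions $(r,s)$ with $|a_r/a_s|<1$. Set $U:=V_a$ and $U^-:=V_{a^{-1}}$, which is supported on the positions with $|a_r/a_s|>1$. Let $L:=Z_G(a)$, the block-diagonal part supported on the positions with $|a_r|=|a_s|$; strictly, the positions with $a_r=a_s$, but for the factorization we group indices by the absolute value $|a_r|$, so that $L$ is the Levi of $P_a$.

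\emph{Step 1: Iwahori factorization.} We claim that for $i\geq 1$,
$$K_i=(K_i\cap U^-)(K_i\cap L)(K_i\cap U).$$
Permutation matrices lie in $K_0$ and normalize $K_i$. After conjugating by a permutation we may therefore order the indices so that $|a_r|$ is non-increasing. Then $U^-$ becomes block-upper unitriangular, $L$ becomes block-diagonal and $U$ becomes block-lower unitriangular. The factorization is now block Gaussian elimination. Write $k\in K_i$ as $k=1+X$ with $X\in \varpi^iM_n(O_F)$. Every leading principal block minor of $k$ is $\equiv 1 \bmod \varpi^i$, so it is invertible over $O_F$. Hence the block LDU decomposition exists, and each of its factors is $\equiv 1\bmod \varpi^i$ with integral entries, so each factor lies in $K_i$. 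I expect this step to be the only real work. The main points to check are that the ordering by $|a_r|$ (rather than by $a_r$) makes $L$ the correct Levi, and that the condition $i\geq1$ is used to get the invertibility of the minors over $O_F$.

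\emph{Step 2: conjugate by $a$.} From Step 1,
$$a^{-1}K_ia=\bigl(a^{-1}(K_i\cap U^-)a\bigr)\,(K_i\cap L)\,\bigl(a^{-1}(K_i\cap U)a\bigr).$$
On $U^-$, conjugation by $a^{-1}$ multiplies the $(r,s)$ entry by $a_s/a_r$, and $|a_s/a_r|<1$ on the support of $U^-$. Therefore $a^{-1}(K_i\cap U^-)a\subset K_i\cap U^-\subset K_0$. The middle factor commutes with $a$, so it lies both in $a^{-1}K_ia$ and in $K_0$. Consequently the product of the first two factors lies in $a^{-1}K_ia\cap K_0$. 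The last factor lies in $a^{-1}K_ia$, and it lies in $V_a$ because $V_a$ is normalized by $a$. This proves the inclusion $\subset$.

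\emph{Step 3: reverse inclusion.} Both $a^{-1}K_ia\cap K_0$ and $a^{-1}K_ia\cap V_a$ are subsets of the group $a^{-1}K_ia$, so their product is contained in it. This gives the inclusion $\supset$ and completes the proof.
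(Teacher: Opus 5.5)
Your argument is essentially the paper's: the paper uses the Iwahori factorization $K_i=(K_i\cap P_{a^{-1}})(K_i\cap V_a)$, where your first two factors together make up $K_i\cap P_{a^{-1}}$, and notes that conjugation by $a^{-1}$ does not enlarge the entries of elements of $P_{a^{-1}}$, so $a^{-1}(K_i\cap P_{a^{-1}})a\subset a^{-1}K_ia\cap K_i$. Your block elimination simply supplies a proof of the factorization that the paper quotes. With $U^-$ block-upper your product is upper--diagonal--lower, so it is the trailing rather than the leading minors that matter; this is harmless, since all principal minors of $k\in K_i$ are units (or apply LDU to $k^{-1}$).

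There is one slip. With $L$ defined by $|a_r|=|a_s|$, as the factorization requires, $L$ is not $Z_G(a)$ and need not commute with $a$ (e.g.\ $a=\mathrm{diag}(1,u)$ with $u\in O_F^\times\smallsetminus\{1\}$). However, conjugation by $a^{\pm1}$ multiplies the entries of $L$ by units, so $a^{\pm1}(K_i\cap L)a^{\mp1}=K_i\cap L$ and your conclusion about the middle factor still holds.
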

\begin{proof}
From Iwahori decomposition of $K_i$ we get $$K_i=(K_i\cap P_{a^{-1}})(K_i\cap V_a).$$ Thus
\RamiA{
\begin{multline*}   
a^{-1}K_i a= a^{-1}(K_i\cap P_{a^{-1}}) a a^{-1}(K_i\cap V_a)a\DimaA{\subset} (a^{-1}K_i a\cap K_i) (a^{-1}K_ia\cap \DimaA{V_a})\subset\\\subset (a^{-1}K_ia\cap K_0) (a^{-1}K_i a\cap V_a)
\end{multline*}
The opposite inclusion is obvious.
}
    
\end{proof}

\begin{lemma}\label{lem:conj}
There exists a polynomial 
$\mdef{\alp_{conj}}:\N\to \N$ such that for every $i\in \N$ and $y\in G_i$ we have: 
    $$K_{\alp_{conj}(i)} \subset y^{-1}K_{i}y$$
\end{lemma}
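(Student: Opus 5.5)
The plan is to reduce the statement to an entrywise estimate on matrices. For $y\in G_i$ we have $ov_G(y)\le i$. This means every entry of $y$ has valuation $\ge -i$ and $\val(\det y)\le i$.

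\textbf{Step 1: bound $y^{-1}$.} By Cramer's rule, $y^{-1}=\det(y)^{-1}\operatorname{adj}(y)$. The entries of $\operatorname{adj}(y)$ are $(n-1)\times(n-1)$ minors, so they have valuation $\ge -(n-1)i$. Multiplying by $\det(y)^{-1}$ costs at most $i$ more. Hence every entry of $y^{-1}$ has valuation $\ge -ni$.

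\textbf{Step 2: conjugate congruence elements.} Take $k\in K_j$ and write $k=1+X$, where $X$ has entries in $\mathfrak p^{j}$. Then
$$yky^{-1}=1+yXy^{-1},$$
and each entry of $yXy^{-1}$ has valuation $\ge j-i-ni=j-(n+1)i$.

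Set $\alp_{conj}(i):=(n+2)i+1$. This is a polynomial in $\N[t]$, and it is $\ge 1$ even when $i=0$, so that $K_{\alp_{conj}(i)}$ is a genuine congruence subgroup. With $j=\alp_{conj}(i)$ we get $j-(n+1)i=i+1\ge i$. So $yXy^{-1}$ has entries in $\mathfrak p^{i}$, and $yky^{-1}\in 1+\mathfrak p^{i}M_n(O_F)$.

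\textbf{Step 3: land in $K_i$.} An element $1+Y$ with $Y\in \mathfrak p^{i}M_n(O_F)$ and $i\ge1$ has unit determinant, so it lies in $K_0$ and hence in $K_i$. If the convention is $K_0=\GL_n(O_F)$ and $i=0$, we also need invertibility over $O_F$. Here $yky^{-1}$ has integral entries and its determinant equals $\det k$, a unit, so it lies in $K_0$. Therefore $yK_{\alp_{conj}(i)}y^{-1}\subset K_i$, i.e.
$$K_{\alp_{conj}(i)}\subset y^{-1}K_iy.$$

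There is no real obstacle. The only point needing care is the valuation bound on $y^{-1}$ in Step 1, which uses the $\val(\det)$ part of $ov_G$. The edge case $i=0$ is handled by the $+1$ in $\alp_{conj}$.
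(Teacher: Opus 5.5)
Your argument is correct, but it reaches the estimate by a different route than the paper. The paper takes $\alpha_{conj}(i)=(n+2)i$ and uses the Cartan decomposition $y=k_1ak_2$ with $k_1,k_2\in K_0$ and $a\in T\cap G_i$, which is possible because $ov_G$ is bi-$K_0$-invariant. Since $K_0$ normalizes $K_i$, this gives $y^{-1}K_iy=k_2^{-1}(a^{-1}K_ia)k_2$. So everything reduces to showing $a^{-1}K_ia\supset K_{(n+2)i}$ for a diagonal $a\in G_i$. That is immediate: the diagonal entries satisfy $-i\le \valua(a_m)\le ni$, so every ratio $a_m/a_l$ has valuation of absolute value at most $(n+1)i$.

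You avoid the Cartan decomposition and bound $y^{-1}$ directly by Cramer's rule. The valuation count is the same ($-i$ from $y$, $-ni$ from $y^{-1}$), so both approaches give the same linear bound. Your version is more elementary and self-contained, and it makes explicit the entrywise estimate that the paper leaves implicit in the diagonal case. The paper's version is shorter once the Cartan decomposition is available, and it is the form that transfers to general reductive groups.

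The $+1$ in your $\alpha_{conj}$ is harmless but unnecessary. Your Step 3 determinant argument already handles $i=0$ with $j=0$, which is why $(n+2)i$ suffices.
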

\begin{proof}
Take $\alp_{conj}(i)=(n+2)i$
using Cartan decomposition write $y=k_1 a k_2$ where $k_i\in K_0$ and $a\in T\cap G_i$. We have $$y^{-1}K_{i}y=k_2^{-1} a^{-1} k_1^{-1}K_{i}k_1 a k_2=k_2^{-1} a^{-1} K_{i} a k_2\supset k_2^{-1} K_{\alp_{conj}(i)}  k_2=K_{\alp_{conj}(i)}$$
\end{proof}

\begin{definition}
Let $A<T$ be a standard torus.
\begin{enumerate}
\item Recall that ${\vert\vert\cdot\vert\vert_{G/A}}:=pr_*(||\cdot||_G)$ where $pr:G\to G/A$ is the projection.
    \item  Denote by $\mdef{\fX(A)}$ the set of all proper standard subtori of $A$.
    \item For $x\in G$ define $\mdef{depth_A(x)}:=\RamiB{\min}_{A'\in \fX(A)} ov_{G/A'}(\RamiB{[x]})$.
\end{enumerate}
\end{definition}
The following lemma is straightforward.
\begin{lem}\label{lem:depth:mean}
    For any
    \begin{itemize}
        \item integer $i$,
        \item composition $\lambda$ of $n$,
        \item $a\in T_{\lambda}$ with $depth_A(a)>ni$, and  
        \item \RamiB{$j_1,j_2\in \{1,\dots,n\}$ lying in different parts of the composition  $\lambda$.}
    \end{itemize}
    we have $ov_{F^\times}(a_{j_1}/a_{j_2})>i$ 
    \RamiB{where $a_{i}$ is the $i$-th component of the diagonal matrix $a$.}
\end{lem}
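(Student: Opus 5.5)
The plan is to prove the contrapositive, taking $A=T_\lambda$ (which is what $depth_A(a)$ refers to here). Since $a\in T_\lambda$, its diagonal entries are constant along each part of $\lambda$. Write $a_{(k)}\in F^\times$ for the common value on the $k$-th part. Fix indices $j_1,j_2$ lying in different parts, say parts $k_1\neq k_2$, and suppose $ov_{F^\times}(a_{j_1}/a_{j_2})\leq i$. I read this as $\ell^{-i}\le |a_{j_1}/a_{j_2}|\le \ell^{i}$, using the norm $\max(|\cdot|,|\cdot|^{-1})$ on $F^\times$. The goal is then $depth_A(a)\leq ni$.

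First I construct a proper standard subtorus. Let $\lambda'$ be the composition obtained from $\lambda$ by merging the parts $k_1$ and $k_2$. Strictly speaking, merging non-adjacent parts does not give a composition. If the paper's standard tori only allow contiguous blocks, I would instead merge all the parts from $k_1$ to $k_2$; the argument below is unchanged, since $b$ is just constant on the merged block. Set $A':=T_{\lambda'}$. Then $A'\subsetneq A$ is a proper standard subtorus, so $A'\in\fX(A)$. Hence
$$depth_A(a)\le ov_{G/A'}([a])=\inf_{b\in A'} ov_G(ab),$$
because $||\cdot||_{G/A'}=pr_*(||\cdot||_G)$.

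Next I choose $b\in A'$ explicitly. On each unmerged part $k$, let $b$ equal the scalar $a_{(k)}^{-1}$. On the merged block, let $b$ equal the scalar $a_{j_2}^{-1}$. Then $ab$ is diagonal, and its entries are as follows: $1$ on every unmerged part and on part $k_2$, and $a_{j_1}/a_{j_2}$ on part $k_1$ (or on the intermediate parts too, in the contiguous variant, with the corresponding ratios). In the basic (non-contiguous) case this gives $\max_{r,s}(-\valua((ab)_{rs}))\le i$ and $\valua(\det(ab))\le n\,i$, since at most $n$ diagonal entries have valuation in $[-i,i]$ and the rest are units. Thus $ov_G(ab)\le ni$, so $depth_A(a)\le ni$, contradicting the hypothesis $depth_A(a)>ni$.

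The only delicate point is the one raised above, namely whether merging non-adjacent parts yields a standard subtorus. In the contiguous variant, the intermediate parts are forced to share the scalar $a_{j_2}^{-1}$. Their entries $a_{(k)}/a_{j_2}$ are then not controlled by the assumption, so the bound above does not extend automatically. To handle this, I would first reduce to the case where $k_1,k_2$ are adjacent parts. This means choosing $j_1,j_2$ in consecutive parts, and absorbing the resulting loss into the factor $n$ in the hypothesis $depth_A(a)>ni$, which is exactly the slack the statement provides. I would also double-check the normalization of $ov_{F^\times}$, since that is the remaining bookkeeping point.
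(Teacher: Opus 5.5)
Your construction is correct when the two parts $k_1,k_2$ are adjacent. Merging them gives a composition $\lambda'$ with $T_{\lambda'}\in\fX(T_\lambda)$, and your $b$ gives $ov_G(ab)\le (n-1)i\le ni$.

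The gap is the proposed reduction of the non-adjacent case to the adjacent one. A small ratio $a_{j_1}/a_{j_2}$ between non-adjacent parts does not make any ratio between adjacent parts small. The parts in between can be arbitrarily far off, so there is no bounded loss for the factor $n$ to absorb.

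In fact, under the reading you adopt, the statement is false for non-adjacent parts. That reading is: $A=T_\lambda$, and standard tori are centers of block-diagonal Levi subgroups with contiguous blocks. Take $n=3$, $\lambda=(1,1,1)$, a uniformizer $\varpi$, and $a=\mathrm{diag}(1,\varpi^{-N},1)$.
\begin{itemize}
\item Every proper standard subtorus of $T$ lies in $T_{(2,1)}$ or in $T_{(1,2)}$. So each of its elements $b$ has $b_1=b_2$ or $b_2=b_3$.
\item Suppose $b_1=b_2$, and put $u=\valua(b_1)$, $v=\valua(b_3)$. Then $ov_G(ab)\ge\max(N-u,\,-v,\,2u+v-N)$.
\item Since $2(N-u)+(-v)+(2u+v-N)=N$, this maximum is at least $N/4$. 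The case $b_2=b_3$ is symmetric.
\end{itemize}
Hence $depth_T(a)\ge N/4>3i$ as soon as $N>12i$, while $a_1/a_3=1$.

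So no argument can give the statement for non-adjacent parts with the contiguous $\fX(A)$. What your proof does establish is the following:
\begin{itemize}
\item the adjacent case;
\item the full statement, once $\fX(A)$ is enlarged to contain the subtori of $T_\lambda$ cut out by equating the scalars on an arbitrary (not necessarily contiguous) set of parts. These are exactly the centers of all Levi subgroups containing the standard Levi of $\lambda$. For them, your first construction, with $b$ constant on the union of parts $k_1$ and $k_2$, works verbatim.
\end{itemize}
The enlarged version is also the one that Corollary~\ref{lem:cont} needs, since $V_a$ contains root subgroups between non-adjacent parts. You should state explicitly which of these versions you prove, rather than appealing to the reduction.
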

\begin{cor}\label{lem:cont}
\DimaB{F}or any
\begin{itemize}    
    \item $i\in \N,$
    \item  standard torus $A<T$, and 
    \item $a\in A$ such that $depth_{A}(a)> ni$
\end{itemize}
we have 
$$a^{-1}K_i a\supset V_a \cap G_i.$$
\end{cor}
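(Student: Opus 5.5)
The plan is to reduce the inclusion to a computation of matrix entries. Since $g\in a^{-1}K_ia$ if and only if $aga^{-1}\in K_i$, it suffices to show that $aua^{-1}\in K_i$ for every $u\in V_a\cap G_i$.

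First I will make $V_a$ explicit. Let $A=T_\lambda$ and write $a=\mathrm{diag}(a_1,\dots,a_n)$. Then $\mathrm{Ad}(a)^k$ multiplies the $(j_1,j_2)$ entry of a matrix by $(a_{j_1}/a_{j_2})^k$. Hence $V_a$ consists of the unipotent matrices $u$ with $u_{jj}=1$ and $u_{j_1j_2}=0$ unless $|a_{j_1}/a_{j_2}|<1$. Because $a$ is constant on the blocks of $\lambda$, every such pair $(j_1,j_2)$ lies in different parts of $\lambda$.

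Next, fix $u\in V_a\cap G_i$. The condition $ov_G(u)\le i$ gives $\valua(u_{j_1j_2})\ge -i$ for all entries. The matrix $aua^{-1}$ is unipotent, with diagonal entries $1$ and off-diagonal entries $(a_{j_1}/a_{j_2})u_{j_1j_2}$, supported on the pairs above. For such pairs $|a_{j_1}/a_{j_2}|<1$, so $ov_{F^\times}(a_{j_1}/a_{j_2})=\valua(a_{j_1}/a_{j_2})$. Lemma \ref{lem:depth:mean}, applied with the hypothesis $depth_A(a)>ni$, then bounds this valuation from below. What remains is to check that each off-diagonal entry of $aua^{-1}$ has valuation at least $i$. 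Since $aua^{-1}$ is already unipotent with integral entries, this puts it in $K_i$.

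The main obstacle is the bookkeeping of constants in this last step. Lemma \ref{lem:depth:mean} as stated only gives $\valua(a_{j_1}/a_{j_2})>i$, and together with $\valua(u_{j_1j_2})\ge -i$ this yields positive valuation rather than valuation $\ge i$. I would therefore revisit the (straightforward) proof of Lemma \ref{lem:depth:mean}. The aim is to check that $depth_A(a)>ni$ actually forces $\valua(a_{j_1}/a_{j_2})\ge 2i$ for indices in different parts. For this I would compare $ov_{G/A'}([a])$, where $A'$ is the subtorus obtained by merging the two parts of $\lambda$, with the norm of the diagonal representative $a\cdot\mathrm{diag}(\ldots)$ that equalizes the merged blocks. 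If the available constant is weaker, the fallback is to replace $ni$ by a suitable linear multiple such as $2ni$ in the hypothesis. This is harmless for the later applications, where only polynomial bounds matter.
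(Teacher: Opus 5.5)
Your reduction ($u\in a^{-1}K_ia$ iff $aua^{-1}\in K_i$), your description of $V_a$, and your plan of feeding Lemma~\ref{lem:depth:mean} into an entrywise valuation count are exactly how the paper intends the corollary to follow: it is stated without proof, right after that ``straightforward'' lemma. You are also right that the conclusion $>i$ of that lemma is not enough.

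However, the repair you propose is not a matter of constants. You want to show that $depth_A(a)>ni$ forces $\valua(a_{j_1}/a_{j_2})\ge 2i$ for all $j_1,j_2$ in different parts, by merging the two parts. That claim is false, and so is the corollary as stated, once $\lambda$ has three or more parts.

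The reason is that $\fX(A)$ consists of standard tori $T_\mu$ for compositions $\mu$ coarser than $\lambda$. Identifying two parts therefore gives an element of $\fX(A)$ only when the parts are adjacent. So $depth_A(a)$ only controls the gaps between adjacent parts. But $V_a$ is the unipotent radical attached to the actual ordering of the valuations of the $a_j$, and it can pair non-adjacent parts whose gap is small.

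Concretely, let $n=3$, $A=T$, $i\ge 1$ and $a=\mathrm{diag}(1,\varpi^N,\varpi)$.
\begin{itemize}
\item Every $b$ in $T_{(2,1)}$, $T_{(1,2)}$ or $Z(G)$ gives $ab=\mathrm{diag}(x_1,\varpi^N x_2,\varpi x_3)$ with $x_2\in\{x_1,x_3\}$.
\item If $ov_G(ab)\le M$, then $\valua(x_1)\ge -M$ and $\valua(\varpi x_3)\ge -M$. Hence $\valua(\varpi^N x_2)\ge N-1-M$, and so $M\ge \valua\det(ab)\ge N-1-3M$.
\item Thus $depth_T(a)\ge (N-1)/4>3i$ once $N\ge 12i+2$.
\item Yet $u=I+\varpi^{-i}E_{31}$ lies in $V_a\cap G_i$: indeed $(a^k u a^{-k})_{31}=\varpi^{k-i}\to 0$ and $ov_G(u)=i$.
\item But $aua^{-1}=I+\varpi^{1-i}E_{31}\notin K_i$.
\end{itemize}
The depth grows with $N$ while the offending gap stays $1$. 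So replacing $ni$ by $2ni$, or by any function of $i$, does not rescue the statement. Similarly, $a=\mathrm{diag}(1,\varpi^N,1)$ with $j_1=1$, $j_2=3$ shows that Lemma~\ref{lem:depth:mean} itself fails for non-adjacent parts.

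Your merging comparison is still the right idea wherever it applies. Suppose $A'$ is obtained from $A$ by identifying the parts containing $j_1$ and $j_2$. Take $b\in A'$ equal to $a^{-1}$ off these two parts, and equal to $a_{j_1}^{-1}$ (or $a_{j_2}^{-1}$) on them. Then $ab$ is $1$ except on a single part, where it equals $a_{j_2}/a_{j_1}$ (or $a_{j_1}/a_{j_2}$). For the appropriate choice, $ov_G(ab)=|\valua(a_{j_1}/a_{j_2})|$.

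Hence $ov_{G/A'}([a])\le|\valua(a_{j_1}/a_{j_2})|$, and $depth_A(a)>ni$ gives $\valua(a_{j_1}/a_{j_2})\ge ni+1$ whenever $u_{j_1j_2}\ne 0$. The corresponding entry of $aua^{-1}$ then has valuation $\ge (n-1)i+1\ge i$, with the original hypothesis $ni$ unchanged.

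This proves the corollary when $\lambda$ has at most two parts. In general it proves it after enlarging $\fX(A)$ to all subtori of $A$ obtained by identifying any two parts, a finite family of Weyl conjugates of standard tori. The later uses of depth (Lemmas~\ref{lem:HC20.ex} and~\ref{lem:HC20.y.1.ind}) would then have to be run over that enlarged family. Without such a change to the definition of $depth_A$, the statement cannot be proved.
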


\subsection{Vanishing of the integral on $xK_iy$}
\RamiB{In this subsection} we prove vanishing of the integral on $xK_iy$, see  \Cref{lem:HC20} below.
\begin{lemma}\label{lem:HC20.ex}
\RamiB{T}here exists a polynomial $\mdef{\alpha_{deep}}\in\N[t]$ such that for any 
\begin{itemize}
\item \RamiB{standard tori $A'<A<T$,}
    \item $i\in\N_{>1}$,
    \item  $\RamiA{(A,i)}$-adapted function $f\in    C^{\infty}(G^{ad})$, and     
 \item       \RamiB{$x\in G^{ad}_i  (A'/Z(G))$  \DimaB{with} $depth_{A'}(x)> \alpha_{deep}(i)$}
\end{itemize}
we have 
    $$\int_{K_i^{ad}} f(xk) dk=0,$$
    \RamiA{where the integral is taken w.r.t. a Haar measure on $K^{ad}_i$}.
\end{lemma}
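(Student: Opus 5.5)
The idea is to move the integral over $K_i^{ad}$ onto a conjugate $aK_ia^{-1}$ by right $A'$-invariance. When $a$ is deep, this conjugate contains a large ball of the unipotent radical $V_{a^{-1}}$. The inner integral over that unipotent part then vanishes by effective cuspidality (\Cref{cor:eff.cusp}).

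\emph{Step 1: peeling off the torus part.} Write $x=x'a$ with $x'\in G^{ad}_i$ and $a\in A'/Z(G)$, and choose a representative $a\in A'$. Since $A'<A$ and $f$ is right-$A/Z(G)$-invariant, we have $f(x'ak)=f(x'aka^{-1})$. Hence
$$\int_{K_i^{ad}}f(xk)\,dk=c\int_{aK_i^{ad}a^{-1}}f(x'u)\,du$$
for a positive constant $c$. Next I need a lower bound for $depth_{A'}(a)$ in terms of $depth_{A'}(x)$. For every $A''\in\fX(A')$ we have $\|[x]\|_{G/A''}\le \|x'\|_{G}\,\|[a]\|_{G/A''}$, using submultiplicativity of $\|\cdot\|_G$ and the definition of the pushforward norm. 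Therefore $depth_{A'}(a)\ge depth_{A'}(x)-i$. I also want to control $depth_{A'}(a^{-1})$. Since $a$ is diagonal, the coordinates of $\|\cdot\|_{G/A''}$ are essentially the ratios $|a_{j_1}/a_{j_2}|$ of \Cref{lem:depth:mean}, and these are symmetric under inversion up to a factor of $n$. So $depth_{A'}(a^{-1})\ge (depth_{A'}(a)-C)/n$ for a constant $C$ depending only on $n$. In particular, if $\alpha_{deep}(i)$ is large, then $a$ is non-central.

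\emph{Step 2: Iwahori-type splitting.} Apply \Cref{lem:iva} with $a^{-1}$ in place of $a$:
$$aK_ia^{-1}=(aK_ia^{-1}\cap K_0)(aK_ia^{-1}\cap V_{a^{-1}}).$$
Both factors are compact open subgroups, and the second is normalized by the first inside $aK_ia^{-1}$. Hence the Haar measure on $aK_ia^{-1}$ decomposes, up to a constant, as the product measure, and the integral becomes
$$\int_{k\in aK_ia^{-1}\cap K_0}\int_{\Omega}f(x'kv)\,dv\,dk,\qquad \Omega:=aK_ia^{-1}\cap V_{a^{-1}}.$$
(Passing to $K^{ad}$ only changes constants, since $Z(G)\cap aK_ia^{-1}=Z(K_i)$.) For each such $k$ we have $x'k\in G^{ad}_i$, because $ov_G(k)=0$ for $k\in K_0$.

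\emph{Step 3: applying effective cuspidality.} Set $j:=\max(i,\alp_{cusp}(i))$. By \Cref{lem:cont} applied to $a^{-1}\in A$, whenever $depth_{A}(a^{-1})>nj$ we have
$$aK_ja^{-1}\supset V_{a^{-1}}\cap G_j.$$
Since $K_j\subset K_i$, this gives $\Omega\supset V_{a^{-1}}\cap G^{ad}_{\alp_{cusp}(i)}$. Then \Cref{cor:eff.cusp}, with the point $x'k$, the element $a^{-1}\in A$, and the compact set $\Omega$, shows that the inner integral vanishes. It therefore suffices to take
$$\alpha_{deep}(i):=n\bigl(n\max(i,\alp_{cusp}(i))+C\bigr)+i+C,$$
which is a polynomial in $i$.

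The step I expect to be the main obstacle is the depth bookkeeping. There are two issues. First, \Cref{lem:cont} is stated with $depth_A$, while the hypothesis controls $depth_{A'}$ of $x$; I must check that the relevant condition is on the ratios $a_{j_1}/a_{j_2}$ across blocks of the composition attached to $A'$, and that it is $V_{a^{-1}}$ for $a\in A'$ that appears, so that $A'$-depth suffices. Second, the transfer of depth from $x$ to $a$ and then to $a^{-1}$ must lose only polynomially. I would verify both directly through \Cref{lem:depth:mean}, which reduces everything to valuations of ratios of diagonal entries.
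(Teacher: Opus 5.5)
You follow the paper's proof of this lemma step for step. You write $x=x'a$, move to $aK_i^{ad}a^{-1}$ by right $A$-invariance, split with \Cref{lem:iva}, and kill the inner $V_{a^{-1}}$-integral by \Cref{cor:eff.cusp} once \Cref{lem:cont} gives $aK_ia^{-1}\cap V_{a^{-1}}\supset V_{a^{-1}}\cap G_{\alp_{cusp}(i)}$. Your extra bookkeeping is sound: $depth_{A'}(a)\ge depth_{A'}(x)-i$; in fact $depth_{A'}(a^{-1})\ge depth_{A'}(a)/n$, so $C=0$ works; $x'k\in G^{ad}_i$ for $k\in K_0$; and $j=\max(i,\alp_{cusp}(i))$ is the right choice. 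There are two slips. First, in Step 3 \Cref{lem:cont} must be applied to the torus $A'$, not $A$: for $A'\neq A$ we have $A'\in\fX(A)$, so $depth_A(a^{-1})=0$. Second, $aK_ia^{-1}\cap K_0$ does not normalize $V_{a^{-1}}$ in general. The Fubini step is still fine, because for a compact group $H=H_1H_2$ the product of Haar measures pushes forward to a Haar measure on $H$.

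The genuine gap is the step you flagged yourself: \Cref{lem:depth:mean}, and with it \Cref{lem:cont}, is not correct as stated, so the containment cannot be obtained this way. The set $\fX(A')$ contains only standard (contiguous-block) subtori. A large $depth_{A'}$ therefore forces a large valuation gap only between blocks that some proper standard subtorus merges, and non-adjacent blocks may differ by a small nonzero amount. Take $n=3$, $A=A'=T$ and $x=a=\mathrm{diag}(1,\varpi^{-N},\varpi^{-1})$. Each of $T_{(2,1)}$, $T_{(1,2)}$, $Z(G)$ merges block $2$ with block $1$ or block $3$, and a direct computation gives $depth_T(a)\ge (N-1)/4$. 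Yet $V_{a^{-1}}\supset\{1+tE_{31}:t\in F\}$, while every element of $aK_ia^{-1}$ has $(3,1)$-entry $\varpi^{-1}k_{31}\in\varpi^{i-1}O_F$. So $\Omega=aK_ia^{-1}\cap V_{a^{-1}}$ never contains $V_{a^{-1}}\cap G^{ad}_{\alp_{cusp}(i)}$, however large $\alpha_{deep}(i)$ is, and \Cref{cor:eff.cusp} is unavailable.

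The paper's proof uses exactly this step, so the gap is shared, but it is repairable in two ways.
\begin{itemize}
\item Let $\fX(A')$ consist of all proper subtori $T_\mu\subset A'$, where $\mu$ ranges over set partitions of $\{1,\dots,n\}$ coarser than the block partition of $A'$ (these are the centers of Levi subgroups containing $T$). Then a large $depth_{A'}(a)$ forces $|\valua(a_{j_1}/a_{j_2})|$ to be large for all $j_1,j_2$ in different blocks. \Cref{lem:depth:mean} and \Cref{lem:cont} then hold after adjusting constants, and the induction of \Cref{lem:HC20.y.1.ind} runs over this still finite family.
\item Alternatively, replace $V_{a^{-1}}$ by $V_{c^{-1}}$, where $c\in A'$ is constant on clusters of blocks separated by a large valuation gap of $a$; such clusters are found by pigeonhole among $n$ nested polynomial scales. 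The complementary Iwahori factor then lies in $G_{\mathrm{poly}(i)}$ rather than $K_0$, and you apply \Cref{cor:eff.cusp} at that level.
\end{itemize}
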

\begin{proof}
    Take $\alp_{deep}(i)=\RamiB{n}^\DimaB{2}\alp_{cusp}(i)+i$.
    \RamiB{Let $A,A',i,f,x$ be as in the Lemma.
    }    
    Write $x=ga$ with $g\in G^{ad}_i$ and $a\in \RamiB{A'}/Z(G)$. We have:
    $$depth_{\RamiB{A'}}(a)\geq \RamiB{n}^\DimaB{2}\alp_{cusp}(i)$$
\RamiB{    and hence 
    $$depth_{\RamiB{A'}}({a^{-1}})\geq \DimaB{n}\alp_{cusp}(i).$$
    }
    Thus, by \Cref{lem:cont}
    $$a K_i a^{-1}\cap V_{a^{-1}}\supset V_{a^{-1}}\cap G_{\alp_{cusp}(i)}.$$
    Therefore, \RamiA{by \Cref{cor:eff.cusp},} for any $h\in G_i^{\RamiA{ad}}$ we have 
    \begin{equation}\label{eq:int.on.conj.0}
      \int_{aK_i^{ad}a^{-1}\cap V_{a^{-1}}} f(hu) du=0.  
    \end{equation}
    \RamiA{Now,}
    \begin{align*}
       \int_{K_i^{ad}} f(xk) dk &=\int_{K_i^{ad}} f(gak) dk=\int_{K_i^{ad}} f(gaka^{-1}) dk=
       \\&=
       \int_{a K_i^{ad}a^{-1}} f(gk_1) dk_1\overset{\RamiA{\text{Lem \ref{lem:iva}}}}{=}\int_{(a K_i^{ad}a^{-1}\cap K_0)(\RamiA{a} K_i^{ad}a^{-1}\cap V_{a^{-1}})} f(gk_1) dk_1=
       \\&=
       \int_{a K_i^{ad}a^{-1}\cap K_0}
       \int_{aK_i^{ad}a^{-1}\cap V_{a^{-1}}} f(gk_2u) du dk_2\overset{\RamiA{\text{\eqref{eq:int.on.conj.0}}}}{=}0
    \end{align*}
    Here $dk,dk_1,dk_2,du$ are appropriate Haar measures.
\end{proof}

\RamiB{
\begin{lemma}\label{lem:HC20.y.1.ind}
Let $A'<A<T$ be standard tori. Then
there exists a  polynomial $\mdef{\alpha^{A',A}_{van0}}\in\N[t]$ such that for any 
\begin{itemize}
    \item $i\in\N_{>1}$,
    \item  an  $\RamiA{(A,i)}$-adapted function $f\in    C^{\infty}(G^{ad})$, \DimaB{and}         \item         $x\in G^{ad}_i (A'/Z(G))$ \DimaB{with}
        $$ov_{G^{ad}}(x)>{\alpha^{A',A}_{van0}(i)},$$ 
\end{itemize}
we have 
    $$\int_{K_i^{ad}} f(xk) dk=0,$$
    \RamiA{where the integral is taken w.r.t. a Haar measure on $K^{ad}_i$}.
\end{lemma}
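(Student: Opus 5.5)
I will prove the statement by induction on the rank of $A'$ (equivalently, on $\dim A'/Z(G)$), for fixed $A$. In the base case $A'=Z(G)$ we have $x\in G^{ad}_i$, so $ov_{G^{ad}}(x)\le i$. Taking $\alpha^{A',A}_{van0}(i)\ge i$ makes the hypothesis vacuous.

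For the inductive step, fix $A'$ with $A'\neq Z(G)$, and suppose polynomials $\alpha^{A'',A}_{van0}$ have been constructed for all $A''\in\fX(A')$ (there are finitely many). Let $x\in G^{ad}_i(A'/Z(G))$ with $ov_{G^{ad}}(x)$ large. There are two cases.

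\textbf{Case 1: $depth_{A'}(x)>\alpha_{deep}(i)$.} Then \Cref{lem:HC20.ex} gives the vanishing directly.

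\textbf{Case 2: $depth_{A'}(x)\le\alpha_{deep}(i)$.} Then there is $A''\in\fX(A')$ with $ov_{G/A''}([x])\le\alpha_{deep}(i)$. I want to place $x$ in $G^{ad}_{i'}(A''/Z(G))$ for some $i'$ polynomial in $i$, and then apply the induction hypothesis with $i'$ in place of $i$.

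The difficulty in Case 2 is twofold.

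\begin{itemize}
\item The induction hypothesis requires $f$ to be $(A,i')$-adapted. Since adaptedness is monotone in $i$ (the support condition only weakens as $i$ grows), $f$ is also $(A,i')$-adapted for $i'\ge i$.
\item The conclusion then concerns integrals over $K^{ad}_{i'}$, while we need vanishing over $K^{ad}_i$.
\end{itemize}

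To handle the second point, I will decompose $K_i^{ad}$ into finitely many cosets $kK^{ad}_{i'}$ with $k\in K_i^{ad}$, and write
$$\int_{K_i^{ad}}f(xk)\,dk=\sum_{k}\int_{K^{ad}_{i'}}f(xkk')\,dk'.$$
Each $xk$ lies in $G^{ad}_{i'}(A''/Z(G))$ (right multiplication by $k\in K_0$ changes the norm by a bounded amount), and $ov_{G^{ad}}(xk)=ov_{G^{ad}}(x)$. So each summand vanishes once $ov_{G^{ad}}(x)>\alpha^{A'',A}_{van0}(i')$.

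It remains to produce $i'$. From $ov_{G/A''}([x])\le\alpha_{deep}(i)$ we get $x=gb$ with $b\in A''$ and $ov_G(g)\le\alpha_{deep}(i)$. Passing to $G^{ad}$ and using $ov_{G^{ad}}\le ov_G$ on representatives, we get $x\in G^{ad}_{\alpha_{deep}(i)}(A''/Z(G))$. So take $i'=\max(i,\alpha_{deep}(i))$.

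Finally, set
$$\alpha^{A',A}_{van0}(i):=i+\sum_{A''\in\fX(A')}\alpha^{A'',A}_{van0}\bigl(i+\alpha_{deep}(i)\bigr),$$
which is a polynomial in $\mathbb{N}[t]$ since polynomials are closed under composition and sums.

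The main obstacle is making sure the hypotheses transfer cleanly in the descent from $A'$ to $A''$: that $x$ really lies in $G^{ad}_{i'}(A''/Z(G))$ with $i'$ polynomial in $i$, and that the refinement from $K_i^{ad}$ to $K_{i'}^{ad}$-cosets preserves both the lower bound on $ov_{G^{ad}}$ and membership in the relevant ball times torus. The first is a definition chase with the norm $\|\cdot\|_{G/A''}=pr_*\|\cdot\|_G$. The second uses the $K_0$-invariance of $\|\cdot\|_G$.
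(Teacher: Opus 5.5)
Your skeleton (induction on $A'$, the split according to $depth_{A'}(x)$, the recursive polynomial) is the paper's. You are right that applying the induction hypothesis at the larger parameter $i'=\alpha_{deep}(i)$ only gives vanishing over $K^{ad}_{i'}$, not over $K^{ad}_i$; the paper's one-line Case~2 passes over this point.

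Your repair, however, rests on a false claim: it is not true that $xk\in G^{ad}_{i'}(A''/Z(G))$ for $k\in K^{ad}_i$. The $K_0$-invariance of $\|\cdot\|_G$ gives $ov_{G^{ad}}(xk)=ov_{G^{ad}}(x)$. But the set $G^{ad}_{i'}(A''/Z(G))$ is only left-$K_0$-invariant: writing $x=gb$ with $b\in A''$, one has $xk=g\,(bkb^{-1})\,b$, and $bkb^{-1}$ is unbounded when $b$ is deep in $A''$. Concretely, take:
\begin{itemize}
\item $n=3$, $A'=T$, $A''=\{\mathrm{diag}(t,t,s)\}$;
\item $x=b=\mathrm{diag}(\varpi^{-N},\varpi^{-N},1)$, so $depth_{A'}(x)=0$, Case~2 applies, and $ov_{G^{ad}}(x)\geq N/4$ is as large as you like;
\item $k=1+\varpi^{i}E_{13}\in K_i$.
\end{itemize}
For every $d=\mathrm{diag}(d_1,d_1,d_3)\in A''$, the matrix $xkd$ has entries $\varpi^{-N}d_1$ and $\varpi^{i-N}d_3$ and determinant $\varpi^{-2N}d_1^2d_3$. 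This forces $\|xkd\|_G\geq\ell^{(N-i)/4}$, hence $ov_{G/A''}([xk])\geq (N-i)/4$. The same holds for every element of the coset $kK_{i'}$, since all of them have $(1,3)$-entry of absolute value $\ell^{-i}$ when $i'>i$. So the summands $\int_{K^{ad}_{i'}}f(xkk')\,dk'$ are not covered by the induction hypothesis for $A''$, and no choice of coset representatives helps.

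The clean way out is to decouple the congruence level from the other parameter and prove a stronger statement. For all $1<i\leq j$, every $(A,j)$-adapted $f$, and every $x\in G^{ad}_j(A'/Z(G))$ with $ov_{G^{ad}}(x)>\alpha^{A',A}_{van0}(j)$, show that $\int_{K^{ad}_i}f(xk)\,dk=0$.

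The proof of Lemma~\ref{lem:HC20.ex} gives the matching two-parameter form verbatim, under the hypothesis $depth_{A'}(x)>\alpha_{deep}(j)$. It only uses:
\begin{itemize}
\item Lemma~\ref{lem:cont} at level $\alpha_{cusp}(j)$, together with $K_{\alpha_{cusp}(j)}\subset K_i$;
\item the bi-$K_0$-invariance of the ball $G^{ad}_j$;
\item Corollary~\ref{cor:eff.cusp} at level $j$.
\end{itemize}
In Case~2 you then apply the induction hypothesis with $j$ replaced by $\alpha_{deep}(j)$ and the same $i$, so no coset decomposition is needed. The lemma as stated is the special case $j=i$.
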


\begin{proof}
    Define recursively:
        $$\alpha_{van0}^{A',A}= i+\sum_{A''\in \fX(A')} \alpha_{van0}^{A'',A}\circ \alpha_{deep}.$$
We will prove the lemma by induction on $A'$ w.r.t. the inclusion order. The base of the induction $A'=Z(G)$ is obvious. So we will now fix $A'$ and assume the statement for any $A''\in \fX(A')$.
Let $i,f,x$ be as in the Lemma.
We will show the required vanishing by analyzing 2 cases:
\begin{enumerate}[{Case} 1.]
    \item $depth_{A'}(x)>\alpha_{deep}(i)$.\\
    This case follows immediately from the previous lemma (\Cref{lem:HC20.ex})
    \item $depth_{A'}(x)\leq \alpha_{deep}(i)$.\\
    In this case one can find $A''\in \fX(A')$ such that $x\in G_{\alpha_{deep}(i)} (A''/Z(G))$. The assertion follows now from the induction hypothesis.
\end{enumerate}
\end{proof}
}
\begin{cor}[A version of {\cite[Theorem 10]{HC_VD}}]\label{lem:HC20.y.1}
There exists a  polynomial $\mdef{\alpha_{van0}}\in\N[t]$ such that for any 
\begin{itemize}
\item standard torus  $A<T$,
    \item $i\in\N_{>1}$,
    \item  an  $\RamiA{(A,i)}$-adapted function $f\in    C^{\infty}(G^{ad})$\DimaB{, and}  \item         $x\in G^{ad}_i$  \DimaB{with}
        $$ov_{G^{ad}}(x)>{\alpha_{van0}(i)},$$ 
\end{itemize}
we have 
    $$\int_{K_i^{ad}} f(xk) dk=0,$$
    \RamiA{where the integral is taken w.r.t. a Haar measure on $K^{ad}_i$}.
\end{cor}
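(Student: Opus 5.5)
The plan is to deduce the statement from \Cref{lem:HC20.y.1.ind} by specializing $A'=A$. The subtlety is that the polynomial $\alpha_{van0}$ must not depend on $A$. Since $G=\GL_n$ has only finitely many standard tori (one for each composition of $n$), I will simply set
$$\alpha_{van0}:=\sum_{A<T \text{ standard}} \alpha^{A,A}_{van0},$$
which lies in $\N[t]$. Because every polynomial in $\N[t]$ is non-negative on $\N$, this sum dominates each summand $\alpha^{A,A}_{van0}$ pointwise.

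Next I check that the hypotheses of \Cref{lem:HC20.y.1.ind} hold with $A'=A$. Fix $A$, $i\in\N_{>1}$, an $(A,i)$-adapted $f$, and $x\in G^{ad}_i$ with $ov_{G^{ad}}(x)>\alpha_{van0}(i)$. Since $1\in A/Z(G)$, we have $G^{ad}_i\subset G^{ad}_i(A/Z(G))$, so $x$ lies in the required set. Moreover,
$$ov_{G^{ad}}(x)>\alpha_{van0}(i)\geq \alpha^{A,A}_{van0}(i).$$
\Cref{lem:HC20.y.1.ind} then gives $\int_{K_i^{ad}}f(xk)\,dk=0$.

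I expect no real obstacle here. The only points to verify are the finiteness of the set of standard tori, which justifies the uniform choice of $\alpha_{van0}$, and the monotonicity used in the inequality $\alpha_{van0}(i)\geq\alpha^{A,A}_{van0}(i)$. Both are immediate.
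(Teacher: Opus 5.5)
Your argument is formally valid, but only because the statement as printed is vacuous. Indeed, $x\in G^{ad}_i$ means $ov_{G^{ad}}(x)\le i$, so the hypothesis $ov_{G^{ad}}(x)>\alpha_{van0}(i)$ can never hold once $\alpha_{van0}(t)\ge t$. This is the case for the polynomials the paper constructs, since its recursion for $\alpha^{A',A}_{van0}$ contains the summand $i$. For the literal statement one could even take $\alpha_{van0}(t)=t$ without invoking Lemma~\ref{lem:HC20.y.1.ind} at all. The hypothesis ``$x\in G^{ad}_i$'' is a misprint for ``$x\in G^{ad}$''. Two things show this: the paper's proof explicitly handles $x$ for which $xK_i^{ad}$ may miss $G^{ad}_i(A/Z(G))$, and Proposition~\ref{lem:HC20} applies this corollary to elements $xzyw$ of large norm. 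For the intended statement, the step you rely on, $x\in G^{ad}_i\subset G^{ad}_i(A/Z(G))$, is not available, so your proof does not cover it.

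The missing ingredient is a short reduction. Since $f$ is $(A,i)$-adapted, $\operatorname{supp}(f)\subset G^{ad}_i(A/Z(G))$. Hence if $xK_i^{ad}\cap G^{ad}_i(A/Z(G))=\emptyset$, the integrand vanishes identically and there is nothing to prove. Otherwise choose $k_0\in K_i^{ad}$ with $xk_0\in G^{ad}_i(A/Z(G))$. By invariance of Haar measure, $\int_{K_i^{ad}}f(xk)\,dk=\int_{K_i^{ad}}f(xk_0k)\,dk$. Moreover $ov_{G^{ad}}(xk_0)=ov_{G^{ad}}(x)$, because $\|\cdot\|_G$ is right $K_0$-invariant (right multiplication by $K_0$ preserves both the maximal absolute value of the entries and $|\det|$), and hence so is its push-forward $\|\cdot\|_{G^{ad}}$. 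Now apply Lemma~\ref{lem:HC20.y.1.ind} with $A'=A$ to $xk_0$. With this step added, your proof is the paper's. Your choice $\alpha_{van0}=\sum_A\alpha^{A,A}_{van0}$ is actually preferable to the paper's $\max_A$, since it genuinely lies in $\mathbb{N}[t]$ and dominates every $\alpha^{A,A}_{van0}$ on $\mathbb{N}$.
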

\RamiB{
\begin{proof}
Take $$\alpha_{van0}=\max_A  \alpha_{van0}^{A,A}$$ where the maximum is over all standard tori.
Let $ A,i,f,x$ be as in the lemma.

    If $xK^{ad}_i\cap G^{ad}_i(A/Z(G))=\emptyset$, we are done. Otherwise we can assume without loss of generality that $x\in G_i^{\DimaB{ad}}(A/Z(G))$. In this case the assertion follows from the previous lemma (\Cref{lem:HC20.y.1.ind}).
\end{proof}
}

\begin{prop}    
[{A version of   \cite[Theorem 20]{HC_VD}}]\label{lem:HC20}
There exists a polynomial $\mdef{\alpha_{van}}\in\N[t]$ such that for any 
\begin{itemize}
    \item standard torus $A<T$, 
    \item $i\in\N_{>1}$,
    \item $y\in G^{ad}_i$, 
    \item  $\RamiA{(A,i)}$-adapted function $f\in    C^{\infty}(G^{ad})$, and
 \item         $x\in G^{ad}$  such that 
        $$ov_{G^{ad}}(x)>{\alpha_{van}(i)}$$ 
\end{itemize}
we have 
    $$\int_{K_i^{ad}} f(xky) dk=0,$$
    \RamiA{where the integral is taken w.r.t. a Haar measure on $K^{ad}_i$}.
\end{prop}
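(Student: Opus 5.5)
We reduce to the case $y=1$, which is \Cref{lem:HC20.y.1}, by moving $y$ past the congruence subgroup. Write $xky = (xy)(y^{-1}ky)$. By \Cref{lem:conj} we have $K_{\alp_{conj}(i)}\subset y^{-1}K_i y$; passing to $G^{ad}$, the same inclusion holds for the images of these groups. So $y^{-1}K_i^{ad}y$ is a disjoint union of finitely many right cosets $h K^{ad}_{i'}$, where $i':=\alp_{conj}(i)$ and $h$ ranges over $y^{-1}K_i^{ad}y$.

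The integral then splits as
\[
\int_{K_i^{ad}} f(xky)\,dk = c\sum_{h}\int_{K^{ad}_{i'}} f(xyh k')\,dk',
\]
where $c>0$ comes from the comparison of Haar measures under conjugation by $y$ (which is unimodular on compact groups). It therefore suffices to show that each term $\int_{K^{ad}_{i'}} f(x' k')\,dk'$ vanishes, with $x':=xyh$.

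To apply \Cref{lem:HC20.y.1} to these terms we need the following.
\begin{itemize}
\item $f$ should be $(A,i')$-adapted. This holds because $i'\ge i$, so $G^{ad}_i\subset G^{ad}_{i'}$, and $A$-cuspidality and right $A/Z(G)$-invariance do not depend on $i$.
\item We need $ov_{G^{ad}}(x')>\alpha_{van0}(i')$. Since $h=y^{-1}ky$ with $k\in K_0$, we have $ov_{G^{ad}}(h)\le 2i$ (up to a harmless constant). Submultiplicativity of $||\cdot||_G$, which descends to $||\cdot||_{G^{ad}}$ via the infimum, then gives $ov(x')\ge ov(x)-ov(y)-ov(h)\ge ov(x)-3i$. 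Here we use $ov(g^{-1})\le (n-1)ov(g)+C$ from the formula for the inverse via the adjugate and $\det^{-1}$, so the constants are polynomial in $n$.
\item We need $x'\in G^{ad}_{i'}$, which is a hypothesis of \Cref{lem:HC20.y.1}. This is a problem, because $x$ is arbitrary. To handle it, we first reduce to the case where the integrand is not identically zero. If $f(xky)\ne0$ for some $k$, then $xky\in\supp f\subset G^{ad}_i(A/Z(G))$. We then try to use right $A$-invariance to replace $x$ by a bounded element. This fails directly, since $A$ does not commute with $K_i y$.
\end{itemize}

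The better route is to look at the proof of \Cref{lem:HC20.y.1}. There the hypothesis $x\in G^{ad}_i$ is used only to reduce, when the integral is nonzero, to $x\in G^{ad}_i(A/Z(G))$. For $x'$ we argue as follows: if $x'K^{ad}_{i'}$ meets $\supp f\subset G^{ad}_i(A/Z(G))$, then $x'\in G^{ad}_{i'+i}(A/Z(G))$. We then apply \Cref{lem:HC20.y.1.ind} with $A'=A$ at level $i''=\max(i',i+i')$; the $(A,i'')$-adaptedness of $f$ is automatic. This yields vanishing as soon as $ov(x')>\alpha_{van0}^{A,A}(i'')$.

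Setting
\[
\alpha_{van}(i):=\max_A\alpha^{A,A}_{van0}\bigl(2\alp_{conj}(i)+i\bigr)+3i+C
\]
for a suitable constant $C$, and taking the maximum over the finitely many standard tori, completes the argument.

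The main obstacle is the bookkeeping of norms under products and inverses in $G^{ad}$. In particular, the bound $ov(h)\le 2i+O(1)$ for $h\in y^{-1}K_iy$ should follow from $||k||_G=1$ on $K_0$ together with submultiplicativity. We also have to make sure that every constant introduced along the way stays polynomial in $i$.
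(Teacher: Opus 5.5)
Your overall route is the paper's. You conjugate the congruence subgroup by $y$ using \Cref{lem:conj}, split the domain into cosets of a smaller congruence subgroup, and kill each coset integral with the $y=1$ vanishing result. Your single decomposition of $y^{-1}K^{ad}_iy$ into $K^{ad}_{i'}$-cosets is cleaner than the paper's two-stage one through $K^{ad}_{i'}$ and $K^{ad}_{i''}$. You are also right that the hypothesis $x\in G^{ad}_i$ in \Cref{lem:HC20.y.1} cannot be meant literally: together with $ov_{G^{ad}}(x)>\alpha_{van0}(i)\geq i$ it makes the statement vacuous. It should read $x\in G^{ad}$, which is how both its proof and the paper's proof of this proposition use it.

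However, your workaround contains a false step, so the argument fails as written. The implication ``$x'K^{ad}_{i'}$ meets $G^{ad}_i(A/Z(G))$ $\Rightarrow$ $x'\in G^{ad}_{i+i'}(A/Z(G))$'' is wrong because $A$ does not normalize $K_{i'}$. From $x'k_0=ga$ you only get $x'=g(ak_0^{-1}a^{-1})a$, and $ak_0^{-1}a^{-1}$ is unbounded. Concretely, take $n=2$, $A=T$ and $x'=\mathrm{diag}(t^{-N},1)\begin{pmatrix}1&t^{i'}\\0&1\end{pmatrix}$. The coset $x'K_{i'}$ contains $\mathrm{diag}(t^{-N},1)\in T$, yet $x'\in G^{ad}_jT$ forces $3j\geq N-i'$. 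There is a second mismatch: \Cref{lem:HC20.y.1.ind} applied at level $i+i'$ gives vanishing over $K^{ad}_{i+i'}$, not over the group $K^{ad}_{i'}$ you are integrating over.

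The repair is the one used in the proof of \Cref{lem:HC20.y.1}. The integral over $K^{ad}_{i'}$ is unchanged if you replace $x'$ by $x'k_0$ with $k_0\in K^{ad}_{i'}$, and $ov_{G^{ad}}(x'k_0)=ov_{G^{ad}}(x')$ because $K_0$ preserves $\|\cdot\|_G$. So choose $x'k_0\in\operatorname{supp}f\subset G^{ad}_{i}(A/Z(G))\subset G^{ad}_{i'}(A/Z(G))$ and apply \Cref{lem:HC20.y.1.ind} with $A'=A$ directly at level $i'$.

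Finally, your bounds $ov(h)\leq 2i$ and $ov(x')\geq ov(x)-3i$ are not right for general $n$; for instance $ov_{G^{ad}}(y^{-1})$ can be about $(n-1)i$. The clean estimate is to write $x=x'y^{-1}k^{-1}$ with $k\in K^{ad}_i$, which gives $ov_{G^{ad}}(x')\geq ov_{G^{ad}}(x)-ni$. Then $\alpha_{van}(i)=\sum_A\alpha^{A,A}_{van0}(\alp_{conj}(i))+ni$ works. This last point affects only constants, not the existence of the polynomial.
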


\begin{proof}
        We take $$\alp_{van}(i):=\alp_{van_0}(i+\alp_{conj}(\alp_{conj}(i))).$$
Fix:
\begin{itemize}
    \item $A,i,y,f$ as above.
 \item         $x\in G^{ad}$  such that
        $$ov_{G^{ad}}(x)>{\alpha_{van}(i)}$$ 
\end{itemize}
        Denote $i'=\alp_{conj}(i)$,  $i''=\alp_{conj}(i')$.
        By \Cref{lem:conj},
        we have $$K_{i''}^{ad}\sub y^{-1}K_{i'}^{ad}y \sub K_i^{ad}.$$
        Now, we have
\begin{align*}
\int_{K_i^{ad}} f(xky) dk&=\sum_{[z]\in K_i^{ad}/K_{i'}^{ad}}\int_{K_{i'}^{ad}} f(xz\RamiA{k_1}y)dk_1=
\\&= 
\sum_{[z]\in  K_i^{ad}/K_{i'}^{ad}}\int_{K_{i'}^{ad}} f(xzyy^{-1}\RamiA{k_1}y)dk_1=
\\&= 
\sum_{[z]\in  K_i^{ad}/K_{i'}^{ad}}\int_{y^{-1}K_{i'}^{ad}y} f(xz\RamiA{y}k_2)dk_2=
\\&= 
\sum_{[z]\in  K_i^{ad}/K_{i'}^{ad}}\left(\sum_{[w]\in (y^{-1}K_{i'}^{ad}y)/K_{i''}^{ad}} \int_{K_{i''}^{ad}} f(x\RamiA{zy}wk_3)dk_3\right)=0
\end{align*}
Here $dk,dk_1,dk_2,dk_3$ are appropriate Haar measures, and the last equality follows from \Cref{lem:HC20.y.1}.
\end{proof}


\subsection{Proof of \Cref{thm:cusp.stab}}
    Take $\alpha_{stab}:=\alpha_{van}$.
    Fix
\begin{itemize}
\item a standard torus $A<T$, 
    \item an integer $i\in \N$,
    \item $y\in G^{ad}_i$,
    \item  an  $(A,i)$-adapted function $f\in    C^{\infty}(G^{ad})$, and 
    \item $j>j_0:={\alpha_{stab}(i)}$
\end{itemize}
We have 
\begin{align*}   
    &\int_{G^{ad}_{j}} f(xy) dx-\int_{G^{ad}_{j_0}} f(xy) dx\\ &=   \int_{G^{ad}_{j}\smallsetminus G^{ad}_{j_0}} f(xy) dx=\sum_{[z]\in (G^{ad}_{j}\smallsetminus G^{ad}_{j_0})/K^{ad}_1}
    \int_{K_1^{ad}} f(zky) d\RamiB{k}=0
\end{align*}
Here the last equality follows from \Cref{lem:HC20}.

\section{Proof of \RamiB{Theorems} \ref{thm:stab} \RamiB{and \ref{thm:Om.bnd.Av}}}\label{subsec:PfStab}

\begin{lemma}\label{lem:com.pres.mes}
Let $M<G$ be a Levi subgroup. Let   
     $x\in M\cap G^{rss}$. Let 
     $A:=Z(M)$ and $a\in A$. Then:
     \begin{enumerate}
         \item\label{lem:com.pres.mes:1} For any $u\in V_a$ we have $uxu^{-1}x^{-1}\in V_a$.
         \item\label{lem:com.pres.mes:2} The map  $C_x:V_a\to V_a$ defined by $$C_x(u):=uxu^{-1}x^{-1}$$ is a homeomorphism that maps a Haar measure to a Haar measure. 
     \end{enumerate}
     
\end{lemma}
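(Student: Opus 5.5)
For part (\ref{lem:com.pres.mes:1}) I would use that $V_a$ is the unipotent radical of the parabolic $P_a$, and that $M$, the centralizer of $A=Z(M)$, lies in the Levi of $P_a$. Hence $x$ normalizes $V_a$, so $xu^{-1}x^{-1}\in V_a$ and therefore $uxu^{-1}x^{-1}=u\cdot(xu^{-1}x^{-1})\in V_a$. Concretely, $V_a=\{g\mid Ad(a)^i g\to 1\}$ and $x$ commutes with $a$, so $Ad(a)^i(xgx^{-1})=x\,Ad(a)^i(g)\,x^{-1}\to 1$.

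For part (\ref{lem:com.pres.mes:2}) I would filter $V_a$ by a chain of $Ad(M)$-stable normal subgroups $V_a=V^1\supset V^2\supset\cdots\supset V^r=1$ with abelian quotients $V^k/V^{k+1}$. The natural choice is the lower central series, or a filtration by the values $|\alpha(a)|$ of the roots occurring in $\mathrm{Lie}(V_a)$. Each quotient is then a finite-dimensional $F$-vector space $W_k$ on which $M$ acts linearly through the adjoint action. Modulo $V^{k+1}$, and for $u,v\in V^k$, the map $C_x$ satisfies
\[
C_x(uv)\equiv C_x(u)C_x(v), \qquad C_x(u)\equiv u\cdot Ad(x)(u)^{-1}.
\]
So on $W_k$ it induces the linear map $1-Ad(x)|_{W_k}$. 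The eigenvalues of $Ad(x)$ on $W_k$ are ratios $\lambda_i/\lambda_j$ of eigenvalues of $x$ with $i,j$ in different blocks of $M$. Since $x\in G^{rss}$, these ratios are never $1$, so $1-Ad(x)|_{W_k}$ is invertible.

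Then I would argue by descending induction on $k$ that $C_x$ restricts to a bijection $V^k\to V^k$. Given $w\in V^k$, first solve modulo $V^{k+1}$ using invertibility on $W_k$, then correct by an element of $V^{k+1}$ using the induction hypothesis. Doing this carefully requires the twisted relation
\[
C_x(uv)=u\,C_x(v)\,u^{-1}\cdot C_x(u),
\]
together with the fact that conjugation by $u$ preserves $V^{k+1}$. A cleaner route is to choose coordinates on $V_a\cong F^N$ adapted to the filtration, ordered by root height. In these coordinates $C_x$ is a polynomial map of triangular form: the $k$-th block of coordinates of $C_x(u)$ equals $(1-Ad(x))u_k$ plus a polynomial in the lower-height coordinates. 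Such a map is a polynomial bijection with polynomial inverse, obtained by solving successively, so it is a homeomorphism.

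The Haar measure on $V_a$ is the product of the additive Haar measures in these coordinates. A triangular polynomial map has constant Jacobian determinant $\prod_k\det(1-Ad(x)|_{W_k})\neq 0$. Hence by the change of variables formula for $F$-analytic maps, $C_x$ pushes Haar measure to a constant multiple of Haar measure, which is again a Haar measure.

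The main obstacle I expect is setting up the triangular coordinates rigorously. One needs an $Ad(M)$-stable filtration with the commutator estimates above, for instance via root subgroups of $\GL_n$ with block-matrix coordinates. For $\GL_n$ this can be made explicit: $V_a$ consists of block upper unitriangular matrices with respect to the ordering of blocks induced by $a$, and the block entries ordered by distance from the diagonal give the required triangularity. Part (\ref{lem:com.pres.mes:1}) is also immediate from this block form.
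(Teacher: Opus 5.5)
Your proposal is correct and follows essentially the same route as the paper: part (1) because $x$ commutes with $a$ and hence normalizes $V_a$, and part (2) via a filtration of $V_a$ by $Ad(M)$-stable normal subgroups with vector-space quotients, on each of which $C_x$ induces an invertible linear map. You also supply what the paper leaves as ``easy to see'': the induced map is $1-Ad(x)$, invertible because $x$ is regular semisimple, and the resulting triangular structure gives a homeomorphism with constant Jacobian, so Haar measure is sent to a scalar multiple of Haar measure.
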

\begin{proof}
Item \eqref{lem:com.pres.mes:1} is obvious. Let us prove item \eqref{lem:com.pres.mes:2}.
Consider the natural filtration $V^k_a$ on the unipotent group  $V_a$. The quotient $V^k_a/V^{k+1}_a$ has a natural structure of a linear space. It is easy to see that $C_x$ preserves this filtration and acts as an invertible linear operator on each $V^k_a/V^{k+1}_a$. This implies the assertion.
\end{proof}

\begin{lemma}\label{lem:cusp.pull}
    Let $f\in C^{\infty}(G)$  be a cuspidal function. 
Let $M<G$ be a Levi subgroup. Let   
     $x\in M\cap G^{rss}$. Let 
     $A:=Z(M)$ and $a\in A$. 
Recall that $\phi_x:G^{ad}\to G$ is defined by $\phi_x([g])=gxg^{-1}$. 
    Then $\phi_x^*(f)$ is $A$-cuspidal (and $A$-right-invariant).   \end{lemma}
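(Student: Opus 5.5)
The right $A$-invariance is immediate. For $b\in A$ the element $b$ lies in $Z(M)$ and $x\in M$, so $b$ commutes with $x$. Hence
$$\phi_x([gb])=gbxb^{-1}g^{-1}=gxg^{-1}=\phi_x([g]),$$
and $\phi_x^*(f)$ is right-$A/Z(G)$-invariant. The substance is therefore $A$-cuspidality.

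Fix a non-central $a\in A$ and $g\in G$. We must show
$$\int_{V_a}f\bigl(guxu^{-1}g^{-1}\bigr)\,du=0.$$
First rewrite the integrand as
$$guxu^{-1}g^{-1}=g\,(uxu^{-1}x^{-1})\,x\,g^{-1}=g\,C_x(u)\,x\,g^{-1},$$
with $C_x$ as in \Cref{lem:com.pres.mes}. By \Cref{lem:com.pres.mes}\eqref{lem:com.pres.mes:2}, $C_x$ is a homeomorphism of $V_a$ carrying Haar measure to a positive multiple $c$ of Haar measure. Changing variables $v=C_x(u)$ therefore gives
$$\int_{V_a}f(guxu^{-1}g^{-1})\,du=c^{-1}\int_{V_a}f(gvxg^{-1})\,dv=c^{-1}\int_{V_a}f\bigl((gvg^{-1})\,gxg^{-1}\bigr)\,dv.$$

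Set $U:=gV_ag^{-1}$. This is the unipotent radical of the parabolic $gP_ag^{-1}$, which is proper because $a$ is non-central, so $V_a\neq 1$. Conjugation by $g$ transports Haar measure on $V_a$ to Haar measure on $U$. The last integral is thus a multiple of $\int_U f(u'\,gxg^{-1})\,du'$, which vanishes by cuspidality of $f$ (the definition uses $h(u)=f(ux)$, matching this left-translate form). The compact-support requirement on $h$ is likewise inherited, since $C_x$ and conjugation by $g$ are homeomorphisms. This gives $A$-cuspidality in the sense of \Cref{def:A.cusp}, where the integral $\int_{V_a}\phi_x^*(f)([g]u)\,du$ is read with $V_a$ viewed inside $G^{ad}$.

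The main obstacle is \Cref{lem:com.pres.mes}\eqref{lem:com.pres.mes:2}, which is assumed here. What matters is that $x$ is regular semisimple in $M$: then $\mathrm{Ad}(x)-1$ is invertible on each graded piece $V_a^k/V_a^{k+1}$, because no eigenvalue of $x$ in one block coincides with an eigenvalue in another. Beyond that, the only point needing care is bookkeeping: the conventions for measures and cosets in $G^{ad}$ versus $G$, which are harmless since $V_a$ injects into $G^{ad}$.
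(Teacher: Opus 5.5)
Your argument is correct and is essentially the paper's proof. You write $guxu^{-1}g^{-1}=(gC_x(u)g^{-1})\cdot gxg^{-1}$, use \Cref{lem:com.pres.mes} to carry Haar measure on $V_a$ to Haar measure on $gV_ag^{-1}$, and conclude by cuspidality of $f$; the paper does the same, only packaging both changes of variables as a single pushforward under $\mathrm{Ad}(g)\circ C_x$.

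One small slip in your last paragraph: the invertibility of $1-\mathrm{Ad}(x)$ on the graded pieces comes from $x$ being regular semisimple in $G$, i.e.\ no eigenvalue shared between blocks, which is the hypothesis $x\in M\cap G^{rss}$; regularity in $M$ alone would not suffice.
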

\begin{proof}
    Let $a\in A$ and $g\in G$. Let $[g]$ be the class of $g$ in $G^{ad}$.
    For $u\in V_a$ we have 
    \begin{align*}        
\phi_x([g]u)=&guxu^{-1}g^{-1}=guxu^{-1}x^{-1}\RamiB{g^{-1}g}xg^{-1}=gC_x(u)g^{-1}gxg^{-1}=\\=
&(Ad\RamiB{(g)}\circ C_x)(u)gxg^{-1}.
    \end{align*}    
    Therefore:
    $$\int_{V_a}\phi_x^*(f)([g]u)du=\int_{V_a} f((Ad\RamiB{(g)}\circ C_x)(u)gxg^{-1})du=
    \int_{Ad\RamiB{(g)}({V_a})}
    f(vgxg^{-1})dv,
    $$
    where $du$ is a Haar measure on $U$ and $dv=(Ad\RamiB{(g)}\circ C_x)_*(du).$ By the previous Lemma (\Cref{lem:com.pres.mes}) $dv$ is a Haar measure on $Ad\RamiB{(g)}(U)$. Since $f$ is cuspidal, we get $$\int_{V_a}\phi^*(f)([g]u)du=0$$ as required.
\end{proof}

\begin{proof}[Proof of \Cref{thm:stab}]
Let $N\in \N$ be such that $\supp(m)\subset \Dima{G_N}Z(G)$.
For any standard Levi $M<G$ let
$a_M:G\times (M\cap  G^{rss})\to G^{rss}$ be the action map. 
By \Cref{lem:Lev.NDP} we can find $c$ such that
for any standard Levi $M<G$  we have 
\begin{equation}\label{eq:thmC.1}
(a_M)_*(||\cdot||_G\times||\cdot||_{M\cap G^{rss}})<c((||\cdot||_{G^{rss}})|_{\Im(a_{\Rami{M}})})^c.
\end{equation}
Also, since $M\cap G^{rss}$ is a closed subset of $G^{rss}$, by \Cref{lem:fin.ndp}, we have  $d$ such that  for any standard Levi $M<G$ we have
\begin{equation}\label{eq:thmC.2}
||\cdot||_{G^{rss}}<d(||\cdot||_{M\cap G^{rss}})^d.
\end{equation}


We take 
$$\alpha^{\Dima{m}}_{ad-stab}(i)
:=
\alpha_{stab}(\alpha_{pull}(N+c\RamiB{d}+d+cdi)+c+ci).$$ 
Fix $x\in G^{rss}$ and $i>i_0:=\alpha^{\Dima{m}}_{ad-stab}(ov_{G^{rss}}(x))$.
We have to show that $\cA_i(m)(x)=\cA_{i_0}(m)(x)$.

Let $M<G$ be a standard Levi such that we have an element $x_0\in M^{el}\cap \RamiB{G}^{rss}$ which is conjugate to $x$.  Let $A=Z(M)$.
By \RamiB{\eqref{eq:thmC.1}}, we can find $x_1\in  M, y\in G$ such that
\begin{enumerate}[(a)]
    \item\label{it:thmC.a} $yx_1y^{-1}=x$
    \item\label{it:thmC.b} $||x_1||_{M\cap G^{rss}}<c(||x||_{G^{rss}})^{c}$
    \item\label{it:thmC.c} $||y||_G<c(||x||_{G^{rss}})^c$
\end{enumerate}
It is easy to see that $x_1\in M^{el}$.
\RamiB{By \eqref{eq:thmC.2},} we also have:
$$||x_1||_{G^{rss}}<d(||x_1||_{M\cap G^{rss}})^d$$
and thus:
\begin{equation}\label{eq:thmC.x1.bnd}
||x_1||_{G^{rss}}<c^\RamiB{d}d(||x||_{G^{rss}})^{cd}.    
\end{equation}


By \Cref{lem:pull}, 
we have
    $$\phi_{x_1}^{-1}(G_NZ(G)) =\phi_{x_1}^{-1}(G_N) \sub G^{\RamiB{ad}}_{\alpha_{pull}(N+ov_{G^{rss}}(x_1))} \RamiB{(}A/Z(G)\RamiB{)}.$$
Denote $i_1:=\alpha_{pull}(N+ov_{G^{rss}}(x_1))$.
By \Cref{lem:cusp.pull}, this implies that $\phi_{x_1}^{*}(m)$ is $(A,i_1)$-adapted.
For any $j\in \N$ 
we have:
$$A_j(m)(x)=\int_{G^{ad}_j}m(Ad(g)(x))dg\overset{\RamiB{\text{\eqref{it:thmC.a}}}}{=}
\int_{G^{ad}_j}m(Ad(g)(yx_1y^{-1}))dg=
\int_{G^{ad}_j}\phi_{x_1}^*(m)(g[y])dg,$$ where $[y]\in G^{ad}$ is the class of $y$.
Note that
\begin{align*}
i_0&=\alpha^{\Dima{m}}_{ad-stab}(ov_{G^{rss}}(x))
=
\alpha_{stab}(\alpha_{pull}(N+c\RamiB{d}+d+cd(ov_{G^{rss}}(x)))+c+c \cdot ov_{G^{rss}}(x))
\overset{\RamiB{\text{\eqref{eq:thmC.x1.bnd}}}}{\geq}
\\&\geq
\alpha_{stab}(\alpha_{pull}(N+ov_{G^{rss}}(x_1))+c+c \cdot ov_{G^{rss}}(x))
\overset{\RamiB{\text{\eqref{it:thmC.c}}}}{\geq}
\\&\geq
\alpha_{stab}(\alpha_{pull}(N+ov_{G^{rss}}(x_1))+ov_G(y))
\geq
\alpha_{stab}(i_1+ov_{G^{ad}}([y])).
\end{align*}
\RamiB{Let $i_2:=i_1+ov_{G^{rss}[y]}$. Note that $\phi_{x_1}^*(m)$ is $(A,i_2)$-adapted and $[y]\in G^{ad}_{i_2}$.}
So, by \Cref{thm:cusp.stab},
$$\cA_{i}(m)(x)=\int_{G^{ad}_i}\phi_{x_1}^*(m)(g[y])dg=\int_{G^{ad}_{i_0}}\phi_{x_1}^*(m)(g[y])dg
=\cA_{i_0}(m)(x)$$ as required.
\end{proof}

\begin{proof}[\RamiB{Proof of} \Cref{thm:Om.bnd.Av}]    
Take $\alpha^\RamiA{m}(i)=\alpha_{av}(\alpha_{ad-stab}^{m}(i)+i)$.
For any $x\in G^{rss}$ and $i\in \bN$ we have
\begin{align*}   
|\cA_i(m)(x)|&\overset{\text{Thm \ref{thm:stab}}}{\leq} \max_{k\leq \alpha_{ad-stab}^{m}(ov_{G^{rss}}(x))}|\cA_k(m)(x)| \leq 
\\&\overset{\text{Thm \ref{thm:bnd.A}}}{\leq} 
\max_{k\leq  \alpha_{ad-stab}^{m}(ov_{G^{rss}}(x)) }\alpha_{av}(k+ov_{G^{rss}}(x))\Omega(|m|)(x)=
\\&\,\,\,\,=\,\,\,\,
\alpha_{av}(\alpha_{ad-stab}^{m}(ov_{G^{rss}}(x)) +ov_{G^{rss}}(x))
\Omega(|m|)(x)=
\alpha^\RamiA{m}(ov_{G^{rss}}(x))\Omega(|m|)(x).
\end{align*}
\end{proof}

\Rami{
\section{Lie algebra versions of the main results}\label{sec:lie}
In this section we formulate  Lie algebra versions of the main results and explain how one can modify the proof of the main results in order to prove the Lie algebra versions.
\begin{definition}
    Let $f\in C^{\infty}(\fg)$.  
    \begin{itemize}
        \item 
   We say that $f$ is \tdef[cuspidal function]{cuspidal} if for any nilpotent radical $\fu$ of a proper parabolic subalgebra  of $\fg$ and
 any $x\in \fg$ the function $h:\fu\to \C$ given by $h(u):=f(x+u)$ is compactly supported and $$\int h\mu_\fu=0,$$ where $\mu_\fu$ is a Haar measure on $\fu$.    
        \item We denote the collection of cuspidal functions on $\fg$ by $\mdef{C^\infty(\fg)^{cusp}}$.
    \end{itemize}
\end{definition}
The proof of \Cref{thm:Om.bnd.Av} also gives:
\begin{customthm}{\ref{thm:Om.bnd.Av}'}
\label{thm:Om.bnd.Av.lie}
For any $m\in C^{\infty}(\fg)^{cusp}$ which has compact support modulo the center, there exists a polynomial $\mdef{\alpha^\RamiA{m}}\in\bN[t]$ such that 
for any $x\in G^{rss}$ 
we have
$$|\cA_i(m)(x)|\leq \alpha^\RamiA{m}(ov_{G^{rss}}(x))\Omega(|m|)(x).$$

Here we extend the definition of $\cA_i$ given in \Cref{def:Av} to functions on $\fg$ in the natural way.
\end{customthm}
To be more precise one has to modify 
the proof of \Cref{thm:Om.bnd.Av} as follows:
In \Cref{lem:com.pres.mes}, replace $C_x(u)$ with $D_x(u):=uxu^{-1}-x$. This allows to modify \Cref{lem:cusp.pull} to work for a cuspidal function $f\in C^\infty(\fg)$. The rest of the proof works with the obvious modifications.

\Cref{thm:main.lie} follows from \Cref{thm:Om.bnd.Av.lie} and the fact that $\hat\mu_x|_B=\lim \cA_i(m)|_B$ for some cuspidal $m\in C^{\infty}(\fg)$. This is proven exactly as in characteristic zero case, see \cite[Lemma 1.19]{HC_SD}.
}

\begingroup
  \let\clearpage\relax
  \let\cleardoublepage\relax 
  \printindex
\endgroup
\bibliographystyle{alpha}
\bibliography{Ramibib}

@article {AD,
    AUTHOR = {Aizenbud, Avraham and Drinfeld, Vladimir},
     TITLE = {The wave front set of the {F}ourier transform of algebraic
              measures},
   JOURNAL = {Israel J. Math.},
  FJOURNAL = {Israel Journal of Mathematics},
    VOLUME = {207},
      YEAR = {2015},
    NUMBER = {2},
     PAGES = {527--580},
      ISSN = {0021-2172},
   MRCLASS = {43A05},
  MRNUMBER = {3359711},
MRREVIEWER = {Radhakrishnan Nair},
       DOI = {10.1007/s11856-015-1181-9},
       URL = {https://doi.org/10.1007/s11856-015-1181-9},
}

@article {Deligne76,
    AUTHOR = {Deligne, Pierre},
     TITLE = {Le support du caract\`ere d'une repr\'esentation
              supercuspidale},
   JOURNAL = {C. R. Acad. Sci. Paris S\'er. A-B},
  FJOURNAL = {Comptes Rendus Hebdomadaires des S\'eances de l'Acad\'emie des
              Sciences. S\'eries A et B},
    VOLUME = {283},
      YEAR = {1976},
    NUMBER = {4},
     PAGES = {Aii, A155--A157},
      ISSN = {0151-0509},
   MRCLASS = {22E50},
  MRNUMBER = {425033},
MRREVIEWER = {G.\ I.\ Ol\cprime shanski\u i},
}

@book {HC_SD,
    AUTHOR = {Harish-Chandra},
     TITLE = {Admissible invariant distributions on reductive {$p$}-adic
              groups},
    SERIES = {University Lecture Series},
    VOLUME = {16},
      NOTE = {With a preface and notes by Stephen DeBacker and Paul J.
              Sally, Jr.},
 PUBLISHER = {American Mathematical Society, Providence, RI},
      YEAR = {1999},
     PAGES = {xiv+97},
      ISBN = {0-8218-2025-7},
   MRCLASS = {22E50 (17B15 22E35)},
  MRNUMBER = {1702257},
MRREVIEWER = {David\ Manderscheid},
       DOI = {10.1090/ulect/016},
       URL = {https://doi.org/10.1090/ulect/016},
}

@book {PR,
    AUTHOR = {Platonov, Vladimir and Rapinchuk, Andrei},
     TITLE = {Algebraic groups and number theory},
    SERIES = {Pure and Applied Mathematics},
    VOLUME = {139},
      NOTE = {Translated from the 1991 Russian original by Rachel Rowen},
 PUBLISHER = {Academic Press, Inc., Boston, MA},
      YEAR = {1994},
     PAGES = {xii+614},
      ISBN = {0-12-558180-7},
   MRCLASS = {11E57 (11-02 20Gxx)},
  MRNUMBER = {1278263},
}

@misc{SP,
  author       = {The {Stacks project authors}},
  title        = {The Stacks project},
  howpublished = {\url{https://stacks.math.columbia.edu}},
  year         = {2025},
}

@incollection {Kot,
    AUTHOR = {Kottwitz, Robert E.},
     TITLE = {Harmonic analysis on reductive {$p$}-adic groups and {L}ie
              algebras},
 BOOKTITLE = {Harmonic analysis, the trace formula, and {S}himura varieties},
    SERIES = {Clay Math. Proc.},
    VOLUME = {4},
     PAGES = {393--522},
 PUBLISHER = {Amer. Math. Soc., Providence, RI},
      YEAR = {2005},
      ISBN = {0-8218-3844-X},
   MRCLASS = {22E35 (17B99)},
  MRNUMBER = {2192014},
MRREVIEWER = {David\ A.\ Renard},
}

@book {HC_VD,
    AUTHOR = {Harish-Chandra},
     TITLE = {Harmonic analysis on reductive {$p$}-adic groups},
    SERIES = {Lecture Notes in Mathematics},
    VOLUME = {Vol. 162},
      NOTE = {Notes by G. van Dijk},
 PUBLISHER = {Springer-Verlag, Berlin-New York},
      YEAR = {1970},
     PAGES = {iv+125},
   MRCLASS = {22E50},
  MRNUMBER = {414797},
MRREVIEWER = {G.\ I.\ Ol\cprime shanski\u i},
}

@book{Ser92,
  author    = {Serre, J. P.},
  title     = {Lie Algebras and Lie Groups},
  series    = {Lecture Notes in Mathematics},
  volume    = {1500},
  year      = {1992},
  publisher = {Springer-Verlag},
  address   = {Berlin},
  note      = {1964 lectures given at Harvard University, reprinted with corrections}
}

@misc{AGKS3,
  title        = {On {H}arish-{C}handra's integrability theorem in positive characteristic},
  author       = {Aizenbud, Avraham and Gourevitch, Dmitry and Kazhdan, David and Sayag, Eitan},
  note         =  
 {Preprint available at 
  \href{https://www.wisdom.weizmann.ac.il/~dimagur/Publication_list.html}
  {{https://www.wisdom.weizmann.ac.il/$\sim$dimagur/}}
},}
\end{document}